\documentclass[a4paper,reqno]{amsart}

\usepackage{amsmath}
\usepackage{amssymb}
\usepackage{amscd}
\usepackage{amsthm}
\usepackage{type1cm}
\usepackage{tcolorbox}
\usepackage{mathrsfs}
\usepackage{adjustbox}

\usepackage{tikz}
\usepackage{tikz-3dplot} 
\usetikzlibrary{math} 
\usetikzlibrary{arrows.meta} 

\usepackage[colorlinks,citecolor=darkgreen,linkcolor=red]{hyperref}
\definecolor{darkgreen}{rgb}{0.0, 0.6, 0.0}

\usepackage[all]{xy}
\input xy
\xyoption{all}
\tcbuselibrary{breakable, skins, theorems}

\usepackage{tikz-cd}

\def\A{\mathcal{A}}
\def\B{\mathcal{B}}
\def\C{\mathcal{C}}
\def\D{\mathcal{D}}
\def\E{\mathcal{E}}

\def\K{\mathcal{K}}

\def\M{\mathcal{M}}

\def\P{\mathcal{P}}
\def\S{\mathcal{S}}
\def\T{\mathcal{T}}

\def\X{\mathcal{X}}
\def\Y{\mathcal{Y}}
\def\Z{\mathcal{Z}}

\DeclareMathOperator{\Mod}{\mathsf{Mod}}
\DeclareMathOperator{\md}{\mathsf{mod}}
\renewcommand{\mod}{\md}

\DeclareMathOperator{\add}{\mathsf{add}}

\DeclareMathOperator{\per}{\mathsf{per}}

\DeclareMathOperator{\pvd}{\mathsf{pvd}}

\DeclareMathOperator{\sg}{\mathsf{sg}}
\DeclareMathOperator{\cosg}{\mathsf{cosg}}

\DeclareMathOperator{\Hom}{Hom}
\DeclareMathOperator{\End}{End}

\DeclareMathOperator{\Ext}{Ext}
\DeclareMathOperator{\op}{op}

\DeclareMathOperator{\Ker}{Ker}
\DeclareMathOperator{\Cok}{Cok}

\DeclareMathOperator{\rad}{rad}
\DeclareMathOperator{\HH}{HH}
\DeclareMathOperator{\HC}{HC}
\DeclareMathOperator{\RHom}{\mathbb{R}Hom}
\DeclareMathOperator{\REnd}{\mathbb{R}End}

\DeclareMathOperator{\dg}{dg}
\DeclareMathOperator{\fp}{fp}

\DeclareMathOperator{\ch}{char}

\def\gl{\mathop{\rm gl.dim}\nolimits}

\def\wgl{\mathop{\rm w.gl.dim}\nolimits}
\def\pd{\mathop{\rm proj.dim}\nolimits}

\def\fd{\mathop{\rm flat.dim}\nolimits}

\theoremstyle{definition}
\newtheorem{Thm}{Theorem}[section]
\newtheorem{Lem}[Thm]{Lemma}
\newtheorem{Prop}[Thm]{Proposition}
\newtheorem{Cor}[Thm]{Corollary}
\newtheorem{Def}[Thm]{Definition}

\newtheorem{Rem}[Thm]{Remark}

\newtheorem{Conj}[Thm]{Conjecture}

\newcommand{\FRAC}[2]{\leavevmode\kern.1em\raise.5ex\hbox{\the\scriptfont0 #1}\kern-.1em/\kern-.15em\lower.25ex\hbox{\the\scriptfont0 #2}}

\title{Auslander correspondence for higher stable dg categories and cluster Morita theory}
\author{Ryu Tomonaga}
\address{Graduate School of Mathematical Sciences, The University of Tokyo, 3-8-1 Komaba, Meguro-ku, Tokyo, 153-8914, Japan}
\email{ryu-tomonaga@g.ecc.u-tokyo.ac.jp}

\begin{document}

\begin{abstract}
The notion of $d$-stable dg categories axiomatizes $d$-cluster tilting subcategories of stable dg categories. We establish an Auslander correspondence for $d$-stable dg categories: we characterize the $d$-stability of an additive connective dg category in terms of coherence, weak global dimension, and a duality on finitely presented modules. This gives a homological characterization of $d$-stability and reveals it as a twisted form of $(d+1)$-Calabi--Yau duality. For locally finite connective dg algebras, this interpretation becomes particularly transparent under Koszul duality, where $d$-stability corresponds to a shifted self-injectivity condition on the Koszul dual.

Following the constructions of Amiot, Guo and Keller, for a $d$-stable dg category $\M$, we introduce its $d$-cluster dg category
\[\C_{d,\dg}(\M):=\per_{\dg}\M/^\mathbb{L}\D^b_{\fp,\dg}(\M).\]
Using our Auslander correspondence, we show that $\C_{d,\dg}(\M)$ contains $\M$ as a $d$-cluster tilting subcategory. In particular, every $d$-stable dg category can be realized as a $d$-cluster tilting subcategory of a stable dg category. We then develop cluster Morita theory: a pretriangulated dg category equipped with a $d$-cluster tilting subcategory $\M$ is quasi-equivalent to $\C_{d,\dg}(\M)$. Thus the connective dg structure of a cluster tilting subcategory determines its ambient dg category up to quasi-equivalence.

As an application of cluster Morita theory, we prove a Morita-theoretic variant of Amiot's conjecture. More precisely, we establish a Calabi--Yau correspondence: for a locally finite $d$-stable dg category $\M$ over a field, right $(d+1)$-Calabi--Yau structures on $\D^b_{\fp,\dg}(\M)$ are in bijection with right $d$-Calabi--Yau structures on $\C_{d,\dg}(\M)$.
\end{abstract}

\maketitle
\tableofcontents

\section*{Introduction}

Cluster tilting theory arose at the meeting point of the categorification
of cluster algebras and higher Auslander--Reiten theory
\cite{FZ02,BMRRT,Iya07a,Iya07b,IYos}.  It studies distinguished additive
subcategories of triangulated categories which control extensions through
vanishing and approximation properties.  A basic structural problem is to
describe intrinsically what a $d$-cluster tilting subcategory inherits from
its ambient triangulated category.  Geiss--Keller--Oppermann showed that,
when such a subcategory is closed under the $d$-fold suspension, it carries
a natural $(d+2)$-angulated structure \cite[Theorem~1]{GKO}.  For a general
$d$-cluster tilting subcategory, however, this closure need not hold, and an
intrinsic description therefore requires a framework beyond this
higher-angulated case.

The dg level is where such an axiomatization becomes possible: the
enhancement retains higher homotopical data suppressed by the homotopy
category, in accordance with the general philosophy of dg
categories and derived Morita theory \cite{Kel94,Kel06,Toen07}.  Chen
introduced stable dg categories, the dg counterpart of stable
$\infty$-categories \cite{Che26}, and Mochizuki--Nakaoka
introduced their higher analogue, the $d$-stable dg categories
\cite{MN26}.  They proved that every $d$-cluster tilting subcategory of a
stable dg category is $d$-stable \cite[7.23]{MN26}, without imposing the
suspension-closure required in the higher angulated setting.  This dg
formulation also places the problem in a representation-theoretic tradition
going back to Auslander's use of functor categories
\cite{Aus66,Au71}.  Iyama formulated this tradition as the unified study of
the homological properties of an algebra, the representation-theoretic
realization of its projective category, and the intrinsic categorical
properties of that category \cite{Iya05}.  Classical Auslander
correspondence is the prototype, and its higher-dimensional form is
developed through higher Auslander--Reiten theory and higher Auslander
correspondence \cite{Iya07a,Iya07b,Iya11}.  In a complementary direction,
the derived Auslander--Iyama correspondence of Jasso--Muro relates
dg-enhanced algebraic triangulated categories with a $d\mathbb Z$-cluster
tilting object to twisted $(d+2)$-periodic self-injective algebras and
establishes uniqueness of their dg enhancements \cite{JM26}.  Their result
concerns the suspension-periodic $d\mathbb Z$-cluster tilting setting and
formulates its characterization in terms of homological information in the
homotopy category.  By contrast, our approach does not pass to the homotopy
category: it characterizes general $d$-stable dg categories directly in terms
of homological information intrinsic to the dg categories themselves, as
expressed through their dg module categories.

The purpose of this paper is to establish a dg version of this three-way
viewpoint for higher stable dg categories.  First, our Auslander
correspondence translates the categorical property of $d$-stability into
homological conditions on dg modules.  Second, our Morita theorem identifies
the ambient realizations of a $d$-stable dg category as a $d$-cluster
tilting subcategory and proves their uniqueness up to quasi-equivalence.
Third, we show that this Morita-theoretic
realization is compatible with right Calabi--Yau structures.  Thus the
connective dg structure of a cluster tilting subcategory controls both its
ambient dg category and its Calabi--Yau data, linking higher Auslander
correspondence with the dg-categorical approach to Calabi--Yau duality
\cite{Gin06,Kel11,BD19}.

Our first main result is an Auslander correspondence for higher stable dg
categories.  It replaces the higher exactness axioms by conditions on
finitely presented modules.

\begin{Thm}[Auslander correspondence, Theorem \ref{thm:auslander-correspondence}]
Let $\M$ be an additive connective dg category and let $d\ge1$.  Then $\M$ is
$d$-stable if and only if the following conditions hold.
\begin{enumerate}
\item $\M$ is coherent.
\item $\wgl\M\le d+1$.
\item The standard duality
\[
\RHom_\M(-,\M)\colon\per\M\xrightarrow{\sim}\per\M^{\op}
\]
restricts to a duality
\[
\mod H^0\M\xrightarrow{\sim}\mod H^0\M^{\op}[-d-1].
\]
\end{enumerate}
\end{Thm}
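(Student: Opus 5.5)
We will reduce $d$-stability to statements about the finitely presented $H^0\M$-modules, regarded as objects of $\per\M$ via the connective t-structure. We use the definition of $d$-stability from \cite{MN26}. Roughly, $\M$ is additive and connective, and every morphism $f$ in $H^0\M$ admits a ``$d$-cokernel'' and a ``$d$-kernel'' tower inside $\M$: iterated cones in $\per\M$ whose $d$-fold extension lands back in $\M$ up to the appropriate shift. By the Yoneda embedding, $f\colon M_1\to M_0$ gives a cone $C_f\in\per\M$ with $H^0C_f=\Cok H^0(f)\in\mod H^0\M$. The plan is to show that the existence of $d$-cokernels is equivalent to (1)+(2), the existence of $d$-kernels is equivalent to the dual statement for $\M^{\op}$, and the compatibility between the two is exactly (3).

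\emph{Step 1 (coherence and weak global dimension).} First we show that the existence of $d$-cokernel sequences $M_1\to M_0\to M_{-1}\to\cdots\to M_{-d}$ is equivalent to the following: every $X\in\mod H^0\M$, viewed as the truncation $\tau_{\ge0}$-heart object, admits a resolution by $\add\M$ of length $d+1$ in $\per\M$. In one direction, the totalization of the $d$-cokernel tower is a perfect complex, and the standard vanishing of the dg Hom forces its cohomology to be concentrated in degree $0$. This gives coherence (kernels of maps in $\mod H^0\M$ stay finitely presented) and a projective resolution of length $\le d+1$. Since $\wgl$ is detected on finitely presented modules in the coherent case, we get $\wgl\M\le d+1$. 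Conversely, given (1)+(2), the module $\Cok H^0(f)$ has a perfect resolution of length $d+1$ by finitely generated projectives, using flat $=$ projective for finitely presented modules. Lifting this resolution to a twisted complex over $\M$ recovers the $d$-cokernel.

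\emph{Step 2 (duality).} For $X=H^0C_f$ with this length-$(d+1)$ resolution, $\RHom_\M(X,\M)$ is a perfect $\M^{\op}$-module with amplitude in $[0,d+1]$. The $d$-kernel condition on the dual side (the left-exactness axioms of $d$-stability, i.e.\ that $d$-cokernel sequences are also $d$-kernel sequences) should translate into the vanishing of $H^i\RHom_\M(X,\M)$ for $i\neq d+1$. This vanishing means $\RHom_\M(X,\M)\in\mod H^0\M^{\op}[-d-1]$. Applying the same argument to $\M^{\op}$ gives the inverse functor, and biduality on $\per$ yields the claimed restriction to an equivalence. Conversely, (3) gives precisely this vanishing, which we will use to verify that the $d$-cokernel towers of Step 1 are simultaneously $d$-kernel towers. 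This is the symmetry axiom, which closes the equivalence.

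\emph{Main obstacle.} We expect the delicate point to be Step 2's translation between the exactness axiom of $d$-stability and the vanishing of $\Ext^i_\M(X,\M)$ for $0\le i\le d$. This requires a careful identification of $\Hom$ in $H^0$ of the tower with the cohomology of $\RHom(X,\M)$, which uses connectivity so that only $H^0$-level data can obstruct. We would handle this by induction on the length of the tower, splitting $X$ along the triangles of the resolution and using the long exact sequences, with $i=0$ giving $\Hom(X,\M)=0$.
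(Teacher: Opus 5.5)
Your overall plan matches the paper's. Existence of homotopy $d$-kernels and $d$-cokernels is matched with coherence plus $\wgl\M\le d+1$ (for $\M$ and $\M^{\op}$). The symmetry axiom is matched with (3) by writing each $F\in\mod H^0\M$ as $T_X=\operatorname{Tot}(h_X)[d+1]$ for a twisted complex $X^\bullet$ in degrees $0,\dots,d+1$. (Mind the direction: a resolution of the right module $\Cok H^0\M(-,f)$ extends to the left of $f$, so it is a homotopy $d$-\emph{kernel}, and kernels alone give only right coherence.)

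The genuine gap is Step~1, which you argue as if $\M$ were an ordinary additive category. Coherence of a connective dg category also requires every $H^{-n}\M(-,A)$ to be finitely presented over $H^0\M$, which you never address. Even coherence of $H^0\M$ cannot be read off a $d$-kernel tower: its Yoneda image is not an exact complex of projective $H^0\M$-modules, because the representables $h_A=\M(-,A)$ have negative cohomology. For $f\colon M\to N$, the long exact sequences only present $\Ker H^0\M(-,f)$ as the cokernel of $H^{-1}h_N\to H^{-1}\operatorname{Cone}(h_f)$, so you first need $H^{-1}h_N$ to be finitely generated. The missing idea is to apply the existence of homotopy $d$-kernels to the zero morphism $0\to A$. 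This gives a triangle $h_A\to H^0h_A\to E_A\dashrightarrow$ with $E_A\in\add\M[2]*\cdots*\add\M[d+1]$, hence $H^{-r}h_A\cong H^{-r-1}E_A$. An induction on $r$ along the filtration of $E_A$ then shows that every $H^{-r}h_A$ is finitely presented.

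The converse in Step~1 has the same defect. $\wgl\M$ is a dg invariant, not that of $H^0\M$: the free graded algebra $k\langle x\rangle$ with $|x|=-1$ and zero differential has $H^0=k$ but weak global dimension $2$. So you cannot resolve $\Cok H^0\M(-,f)$ over $H^0\M$ and ``lift''. Instead you must:
\begin{enumerate}
\item build the resolution $K_{i+1}\to P_i\to K_i$ in $\D(\M)$ with $P_0=h_N$, $P_1=h_M$;
\item use ``finitely presented flat implies projective'' only after base change to $H^0\M$, to get $\pd_\M K_{d+1}\le0$;
\item show that the dg syzygy $K_{d+1}$ itself lies in $\add\M$ by splitting $K_{d+2}\to P_{d+1}\to K_{d+1}$.
\end{enumerate}
Since $K_{d+2}$ is unbounded below, the last step needs a Mittag--Leffler argument over the truncations $\tau^{\ge-r}(K_{d+2}[1])$ together with left completeness of the $t$-structure.

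Step~2 also contains an error, though it is not where the difficulty lies. For $F\in\add\M*\cdots*\add\M[d+1]$, $F^*:=\RHom_\M(F,\M)$ lies in $\D^{\le d+1}$ but not in $\D^{\ge0}$ in general, since $\M(A,-)$ has negative cohomology. For instance, if some $h_B$ lies in the heart, its dual is $\M(B,-)$, which can have $H^{-1}\ne0$. So (3) requires $H^iF^*=0$ for all $i\le d$, including $i<0$, and an induction aimed at $0\le i\le d$ would not suffice.

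However, no induction is needed. By the dg Yoneda lemma, $H^j(\operatorname{Tot}(h_X)^*)(A)\cong\operatorname{htw}(\M)(X^\bullet,A[j])$. So right $d$-exactness of $X^\bullet$ is literally $T_X^*\in\mod H^0\M^{\op}[-d-1]$, just as left $d$-exactness is literally $T_X\in\mod H^0\M$. With these translations, the symmetry axiom for \emph{every} twisted complex in degrees $0,\dots,d+1$ (not only the towers you construct) follows from (3) and full faithfulness of the duality on $\per\M$. Conversely, $d$-stability gives (3) once you know that every $F\in\mod H^0\M$ is some $T_X$. That is exactly the dg resolution statement Step~1 must supply.
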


This result separates the two ingredients of higher stability.  Coherence
and the weak global dimension bound control the existence of finite resolutions, and hence of homotopy $d$-kernels and $d$-cokernels.  The
duality condition is exactly what identifies left and right $d$-exact
sequences.  Thus a condition expressed intrinsically by higher exact
structures becomes a homological, and in concrete cases computable,
condition.  Moreover, if $H^0\M\ne0$, then the inequality in (2) is an
equality: $\wgl\M=d+1$.  This passage from categorical structure to
homological dimensions and dualities of finitely presented functors follows
the pattern of classical and higher Auslander correspondence
\cite{Aus66,Iya05,Iya07b}.

For a locally finite connective dg algebra, this correspondence has a
particularly transparent interpretation under Koszul duality.  Let
$A$ be such a dg algebra, put $S=H^0A/\rad H^0A$, and let
$A^!=\REnd_A(S)$ be its Koszul dual in the sense of \cite{Fus}, following
the classical pattern of Koszul duality in representation theory
\cite{BGS96}.
Then Theorem~\ref{thm:aus-cor-for-loc-fin} gives equivalences
\[
\add_{\dg}A\text{ is $d$-stable}
\quad\Longleftrightarrow\quad
\gl A\le d+1,\quad\Ext_A^i(S,A)\left\{\begin{array}{ll}
\in\add S & (i= d+1)\\
= 0 & (i\neq d+1)
\end{array}
\right.
\]
and
\[
\add_{\dg}A\text{ is $d$-stable}
\quad\Longleftrightarrow\quad
\add DA^!=\add A^![d+1].
\]
The first is a Gorenstein-type condition, while the second says that the
positive dg algebra $A^!$ is shifted self-injective.  In this sense,
$d$-stability is a non-bimodule, or twisted, shadow of a
$(d+1)$-Calabi--Yau property, just as self-injectivity is a shadow of
symmetry.  The same duality also identifies the cluster dg category below
with a singularity dg category:
\[
\C_{d,\dg}(A)\simeq\sg_{\dg}(A^!).
\]
This gives a concrete counterpart to the abstract Auslander
correspondence and explains why its duality condition is natural.  It also
places our construction among the established links between cluster and
singularity categories \cite{Buch21,Orl04,AIR15,Han20,HI22}.

We next turn from Auslander correspondence to Morita theory.  The classical
principle that an algebraic category can be recovered from suitable
generators and their endomorphisms originates in Morita theory and persists
for derived and dg categories \cite{Mor58,Kel94,Toen07}.  Historically,
Buan--Marsh--Reineke--Reiten--Todorov introduced the cluster category of a
hereditary algebra as an orbit category \cite{BMRRT}, and Keller established
the triangulated structure of the relevant orbit categories \cite{Kel05}.
Amiot constructed generalized cluster categories for algebras of global
dimension at most two and for quivers with potential \cite{Ami09}; Guo
extended the construction to higher dimensions \cite{Guo}; and Keller's
deformed Calabi--Yau completions provided a uniform dg framework
\cite{Kel11}.

For the construction in this paper, the relevant starting point is more
specific.  Motivated by the quotient constructions of Amiot and Guo and by
Keller's Calabi--Yau completions \cite{Ami09,Guo,Kel11}, for a $d$-stable dg
category $\M$ we define
its \emph{$d$-cluster dg category} by
\[
\C_{d,\dg}(\M)
:=\per_{\dg}\M/^\mathbb{L}\D^b_{\fp,\dg}(\M).
\]
The derived quotient is understood in the sense of dg localization
\cite{Dri04,Kel06}.
Our fundamental-domain theorem and its converse can be summarized as
follows.

\begin{Thm}[Cluster Morita theorem, Theorems \ref{thm:fundamental-domain}, \ref{thm:morita-cluster-dgcat}]
Let $d\ge1$.
\begin{enumerate}
\item If $\M$ is a $d$-stable dg category, then the composition
\[
\M*\M[1]*\cdots*\M[d-1]\longrightarrow\C_{d,\dg}(\M)
\]
is a connective equivalence.  In particular, the canonical quasi-fully
faithful functor
\[
\M\longrightarrow\tau^{\le0}\C_{d,\dg}(\M)
\]
identifies $\M$ with a $d$-cluster tilting subcategory.
\item Conversely, if $\C$ is a pretriangulated dg category and
$\M\subseteq\tau^{\le0}\C$ is a $d$-cluster tilting subcategory, then
there is a quasi-equivalence
\[
\C\simeq\C_{d,\dg}(\M)
\]
which commutes with the inclusions from $\M$.
\end{enumerate}
\end{Thm}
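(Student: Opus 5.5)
For part (1) I plan to work inside $\per\M$ and use the Auslander correspondence (Theorem~\ref{thm:auslander-correspondence}) as the main input. Let $Q\colon\per\M\to\C:=\per\M/\D^b_{\fp}(\M)$ be the Verdier quotient, which is the homotopy category of $\C_{d,\dg}(\M)$. The first step is to compute morphisms in $\C$ between objects of $\M$. Every object $X$ of $\per\M$ sits in a triangle $\sigma^{\le -n}X\to X\to\sigma^{>-n}X$ coming from a truncation. Coherence and $\wgl\M\le d+1$ show that the cohomology of a perfect module is finitely presented, so each cone of this kind lies in $\D^b_{\fp}(\M)$. The duality condition~(3) gives $\Hom_{\per\M}(N,M[i])=0$ for $N\in\D^b_{\fp}(\M)$, $M\in\M$ and $i\ne d+1$, with the degree $d+1$ term controlled by the duality $\mod H^0\M\simeq\mod H^0\M^{\op}[-d-1]$. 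From this I will show that $\Hom_\C(QM,QM'[i])\cong\Hom_{\per\M}(M,M'[i])$ for $M,M'\in\M$ and $0\le i\le d-1$; in particular it vanishes for $1\le i\le d-1$. The tool is a colimit description of quotient morphisms, roughly $\Hom_\C(QM,QM'[i])=\varinjlim\Hom(M,\sigma^{>-n}M'[i])$, combined with the vanishing just described.

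The second step is to show that every object of $\per\M$ becomes isomorphic in $\C$ to an object of $\M*\M[1]*\cdots*\M[d-1]$, which is the fundamental-domain statement. Take $X\in\per\M$; after shifting we may assume $X$ is connective. Using $\wgl\M\le d+1$ and coherence, I will build a resolution by objects of $\add\M$, truncate it at length $d$, and absorb the remainder, which is a finitely presented module shifted by $d$ or $d+1$, into $\D^b_{\fp}(\M)$ modulo the quotient. Together with the Hom computation, this gives essential surjectivity. It also gives fully faithfulness on $\M*\cdots*\M[d-1]$, by a five-lemma induction on the number of factors. The $d$-cluster tilting property then follows: $\M$ is $d$-rigid in $\C$, and every object lies in $\M*\cdots*\M[d-1]$, which yields the standard characterization of $d$-cluster tilting. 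I expect the main obstacle to be step one, namely controlling $\Hom_\C$ precisely in degree $d$, where the $(d+1)$-shifted duality must be used to show that morphisms factoring through $\D^b_{\fp}$ do not alter degrees $\le d-1$.

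For part (2), let $\C$ be a pretriangulated dg category with $\M\subseteq\tau^{\le0}\C$ a $d$-cluster tilting subcategory. By \cite[7.23]{MN26} $\M$ is $d$-stable. The inclusion induces a dg functor $F\colon\per_{\dg}\M\to\C$, $X\mapsto X\otimes_\M\M$ (restricted Yoneda). Next I will show that $F$ kills $\D^b_{\fp}(\M)$. It suffices to treat $H^0\M$-modules, i.e.\ cones of maps in $\M$ whose image module is finitely presented. For such a cone $C$, the $d$-cluster tilting condition together with the resolution from part (1) shows that $F(C)$ is right orthogonal to $\M[i]$ for all $i$, hence $F(C)=0$ because $\M$ generates $\C$ via $\M*\cdots*\M[d-1]$. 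By the universal property of the dg quotient \cite{Dri04}, $F$ factors through $\bar F\colon\C_{d,\dg}(\M)\to\C$. Finally, $\bar F$ is compatible with the fundamental domains on both sides and is quasi-fully faithful on $\M$, by the same truncated Hom computation in $\C$. A five-lemma induction on $\M*\cdots*\M[d-1]$ then shows that $\bar F$ is quasi-fully faithful, and essential surjectivity holds because both sides equal $\M*\cdots*\M[d-1]$.
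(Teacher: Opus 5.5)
\paragraph{The essential-surjectivity step of part (1) does not work.} The gap is in your second step of part (1), essential surjectivity onto $\M*\M[1]*\cdots*\M[d-1]$, which is the core of the fundamental-domain theorem.

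First, ``after shifting we may assume $X$ is connective'' is not a valid reduction, because the fundamental domain is not closed under shifts. The correct reduction is by truncation: for $X\in\per\M$ the cone $\tau^{\ge1}X$ of $\tau^{\le0}X\to X$ lies in $\S=\D^b_{\fp}(\M)$.

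More seriously, for connective perfect $X$, truncating the $\add\M$-resolution of Lemma~\ref{projrescoh} at length $d$ leaves the connective perfect object $M_d[d]$. This is not a shifted finitely presented module. Already $X=M[d]$ with $M\in\M$ shows the mechanism makes no progress. With minimal choices $P_0=\cdots=P_{d-1}=0$, and the remainder is $X$ itself. Yet the theorem asserts that $M[d]$ becomes isomorphic in $\C_d(\M)$ to an object of $\M*\cdots*\M[d-1]$.

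Nothing in your Step 2 lowers cells in shifts $\ge d$. That is exactly where the duality (3), i.e.\ the $\M^{\op}$-side, must enter; you use (3) only for Hom computations. The paper supplies this through the second $t$-structure $\per\M=\X\perp\X^\perp$ with $\X=\S\cap\D^{\le1}(\M)$:
\begin{itemize}
\item For perfect $X$, truncate $\tau^{\le d-1}X^*\to X^*\to\tau^{\ge d}X^*$ in $\per\M^{\op}$ and dualize. This gives a triangle $(\tau^{\ge d}X^*)^*\to X\to(\tau^{\le d-1}X^*)^*$ whose first term lies in $\X$.
\item A further $\tau^{\le0}$ stays in $\X^\perp$, because $\Hom(\X,\S\cap\D^{\ge2}(\M))=0$.
\item The result lies in $\X^\perp\cap{}^\perp\Y[1]=\M*\cdots*\M[d-1]$. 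This is what the Iyama--Yang theorem packages.
\end{itemize}
For $X=M[d]$ this yields $X\cong(\tau^{\le-1}\M(M,-))^*[d]\in\add\M*\cdots*\add\M[d-1]$ in the quotient. Equivalently, you could use the homotopy $d$-cokernel of $M\to0$, which is left $d$-exact by stability.

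\paragraph{Two further repairs.} In Step 1, the colimit you write cannot be right. $\sigma^{>-n}M'$ lies in $\S$, so those Hom groups die in the quotient, and $M'\to\sigma^{>-n}M'$ does not have its cone in $\S$. Use left fractions $X'\to M$ with cone $N\in\S$ instead. Since $M$ is representable, every map $M\to N$ factors through $\tau^{\le0}N$, so you may take $N\in\S\cap\D^{\le0}(\M)$. For such $N$, $\Hom(N,M'[j])=0$ for all $j\le d$, by (3) applied to the cohomology modules of $N$. Your vanishing for arbitrary $N\in\S$ is false, e.g.\ for $N=F[-1]$ with $F\in\mod H^0\M$ and $i=d$. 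The corrected argument gives the isomorphism for all $i\le d-1$, including the negative $i$ that the connective statement needs.

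In part (2), factoring $R$ through the dg quotient by its universal property is a legitimate alternative to the paper's comparison of connective covers. However, a finitely presented module $\Cok\bigl(H^0\M(-,f)\bigr)$ is not the cone of $f$. It is $\operatorname{Tot}(h_X)[d+1]$ for a homotopy $d$-kernel $X^\bullet$ of $f$. It is killed because $X^\bullet$ is isomorphic in $\operatorname{htw}(\M)$ to the complex spliced from triangles of $\C$ in the proof of Proposition~\ref{prop:cluster-tilting-d-stable}, and that complex totalizes to zero in $\C$.
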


The first part generalizes the fundamental domains for generalized
cluster categories in \cite{Kel05,Ami09,Guo}.  The second part is our Morita
theorem for cluster dg categories.  Its significance is that the
quotient construction is not merely a source of examples: it recovers
every pretriangulated dg category equipped with a $d$-cluster tilting
subcategory.  Consequently, once the connective dg structure on the
cluster tilting subcategory is retained, the ambient dg category is
determined up to quasi-equivalence.  In particular, every $d$-stable dg
category admits an ambient realization, and this realization is unique in
the above sense.  This complements earlier structure and realization
theorems for acyclic Calabi--Yau categories and generalized cluster
categories \cite{KR08,ART11}, as well as dg Morita theorems relating cluster
and singularity categories \cite{HI22}.

We finally ask whether this Morita theory retains Calabi--Yau data.
Calabi--Yau duality is central to cluster categories
\cite{KR07,KR08,Ami09}, while its chain-level refinement is encoded by
non-degenerate Hochschild or negative cyclic classes
\cite{Gin06,VdB15,BD19}.  Keller's localization theorem for cyclic homology
provides the connecting morphism associated with a dg quotient
\cite{Kel99}, and recent work uses this mechanism to transfer Calabi--Yau
structures through cluster quotients \cite{KL23,HL25}.  For quotients
arising from $d$-stable dg categories, we prove that this process is
reversible at the level of right Calabi--Yau structures.

\begin{Thm}[Calabi--Yau correspondence, Theorem \ref{thm:delta-detects-nondegeneracy}, Corollary \ref{cor:bij-CYstr}]
Let $\M$ be a locally finite $d$-stable dg category over a field.  The
connecting isomorphism
\[
\delta\colon
D\!\HH_{-d-1}\bigl(\D^b_{\fp,\dg}(\M)\bigr)
\xrightarrow[\cong]{}
D\!\HH_{-d}\bigl(\C_{d,\dg}(\M)\bigr)
\]
preserves and detects non-degeneracy: an element $[x]$ on the left is non-degenerate
if and only if $\delta([x])$ is non-degenerate.  Consequently, the
corresponding isomorphism in cyclic homology induces a bijection
\[
\{\text{right $(d+1)$-Calabi--Yau structures on }
  \D^b_{\fp,\dg}(\M)\}
\xrightarrow[\cong]{}
\{\text{right $d$-Calabi--Yau structures on }
  \C_{d,\dg}(\M)\}.
\]
\end{Thm}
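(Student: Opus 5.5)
The plan is to obtain $\delta$ from Keller's localization theorem for Hochschild homology applied to the exact sequence of dg categories
\[
\D^b_{\fp,\dg}(\M)\longrightarrow\per_{\dg}\M\longrightarrow\C_{d,\dg}(\M).
\]
This gives a long exact sequence $\cdots\to\HH_n(\D^b_{\fp,\dg}(\M))\to\HH_n(\per\M)\to\HH_n(\C_{d,\dg}(\M))\to\HH_{n-1}(\D^b_{\fp,\dg}(\M))\to\cdots$. Since $\M$ is connective, $\HH_*(\per\M)=\HH_*(\M)$ is concentrated in degrees $\ge0$ in the homological grading. Since $\M$ is locally finite, it is also degreewise finite there, so we may dualize. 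I will check that $D\HH_{-d}(\per\M)$ and $D\HH_{-d-1}(\per\M)$ vanish, using connectivity together with the bound $\wgl\M\le d+1$ from the Auslander correspondence. This makes the connecting map $\delta$ an isomorphism in the stated degrees, and the same argument works for cyclic homology.

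Next, I interpret non-degeneracy. A class $[x]\in D\HH_{-d-1}(\D^b_{\fp})$ corresponds to a bimodule map $x^\sharp\colon \D^b_{\fp}\to D\D^b_{\fp}[-d-1]$ (the diagonal bimodule to its shifted dual). It is non-degenerate if and only if, for all $X,Y$, the induced pairing $\Hom(X,Y)\otimes\Hom(Y,X[d+1])\to k$ is perfect. Because the bimodules involved are generated by objects, it suffices to test this at a generating set. For $\D^b_{\fp}(\M)$ I take the simple-type objects, namely the finitely presented $H^0\M$-modules. For $\C_{d,\dg}(\M)$ I take the images of objects of $\M$, which generate by the fundamental-domain theorem (Theorem~\ref{thm:fundamental-domain}).

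The key step is a compatibility. For $M,N\in\M$, consider the pairing on $\Hom_{\C}(M,N)\otimes\Hom_{\C}(N,M[d])$ given by $\delta([x])$. I want to show that it agrees, up to sign, with the pairing given by $[x]$ on pairs of objects of $\D^b_{\fp}$ obtained from cones. Concretely, a morphism $M\to N[d]$ in $\C$ is represented by a roof, that is, by a triangle in $\per\M$ whose third term $X$ lies in $\D^b_{\fp}$. Under this identification, $\Hom_\C(N,M[d])\cong\Hom_{\D^b_{\fp}}$ applied to suitable objects like $\Cok$ of $\M$-approximations. Here I use the duality
\[
\mod H^0\M\simeq\mod H^0\M^{\op}[-d-1]
\]
from the Auslander correspondence, together with the equivalence $\M*\cdots*\M[d-1]\simeq\C$. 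I expect this to be the main obstacle. I would attack it with the explicit description of Keller's connecting map: $\delta$ is the boundary for the cone of the Hochschild complexes, so it is computed by the residue of the roof. This reduces the problem to a local computation with a single triangle $M\to N\to X\to M[1]$, where $\delta$ corresponds to the composite $\Hom_\C(N,M[d])\to\Hom_{\D^b}(X,X[d+1])\to k$ via $[x]$.

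Given this compatibility, both directions follow. If $[x]$ is non-degenerate, the pairing on $\C$ is identified with a perfect pairing, hence $\delta([x])$ is non-degenerate. Conversely, suppose $\delta([x])$ is non-degenerate. Every finitely presented module is the cokernel of a morphism in $\M$, and such modules generate $\D^b_{\fp}$ by the weak global dimension bound, so the pairing for $[x]$ on generators is perfect. Finally, a right Calabi--Yau structure is a class in $D\HC$ whose image in $D\HH$ is non-degenerate. The localization sequence for cyclic homology is compatible with the one for Hochschild homology via the natural map $\HC\to\HH$, so the isomorphism $\delta$ on $D\HC$ restricts to the stated bijection between right $(d+1)$-Calabi--Yau structures on $\D^b_{\fp,\dg}(\M)$ and right $d$-Calabi--Yau structures on $\C_{d,\dg}(\M)$.
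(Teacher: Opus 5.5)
Your construction of $\delta$ and your final step for cyclic homology agree with the paper. (Connectivity alone gives $\HH_{<0}(\per_{\dg}\M)=0$; the bound $\wgl\M\le d+1$ plays no role there.) The gap is in the middle. You do not prove the compatibility you call the main obstacle. Even granting it, your converse does not follow. Suppose the $\delta([x])$-pairing on $\Hom_{\C}(M,N)\otimes\Hom_{\C}(N,M[d])$ is a trace of $[x]$ on certain composites in $\D^b_{\fp}(\M)$. That says nothing, by itself, about whether the $[x]$-pairing between $\Hom(F,G[j])$ and $\Hom(G,F[d+1-j])$ is perfect for arbitrary finitely presented modules $F,G$. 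Your sentence that such modules generate $\D^b_{\fp}(\M)$, ``so the pairing for $[x]$ on generators is perfect'', only reduces the problem to those generators. It does not explain why perfectness there follows from perfectness in $\C$ on objects of $\M$, and that is the whole content of the detection statement. The forward direction via roofs is Amiot's argument, and running it also needs local $\D^b_{\fp}$-envelopes, which your sketch does not address. The paper obtains them from the truncations $Y\to\tau^{>a}Y$ before invoking \cite[4.3.1(b), 4.3.4]{HL25}.

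The idea you are missing is to compare bimodule maps over $\B=\per_{\dg}\M$ rather than pairings. Put $\A=\D^b_{\fp,\dg}(\M)$, $\C=\C_{d,\dg}(\M)$ and $\K=\B\otimes^{\mathbb L}_{\A}\B$. Then \cite[4.2.2, 4.3.1(a)]{HL25} give a morphism of triangles in $\D(\B^e)$ from $\K[d+1]\to\B[d+1]\to\C[d+1]$ to $D\B\to D\K\to D\C[1]$. Its components are the adjoint $\ell$ of $f_x$, a map $m$, and the restriction of $g_x[1]$. Now evaluate $\K\to\B\to\C$ at $M,N\in\M$. The fundamental-domain theorem (an isomorphism on $H^{\le0}$) and $d$-rigidity ($H^i\C(M,N)=0$ for $1\le i\le d-1$) force $\K(M,N)\in\D^{\ge d+1}(k)$. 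If $g_x$ is invertible, then $\operatorname{Cone}(\ell)\cong\operatorname{Cone}(m)$. At $(M,N)$ the first cone lies in $\D^{\ge-1}(k)$ and the second in $\D^{\le-d-1}(k)$, so both vanish because $d\ge1$. Hence $\ell$ is invertible on $\M$, hence on $\B$. Then $f_x=\ell|_{\A^e}$ is invertible, since $\K|_{\A^e}\simeq\A$ and $(D\B)|_{\A^e}\simeq D\A$. This amplitude gap, coming from the cluster-tilting position of $\M$ in $\C_{d,\dg}(\M)$, is what makes non-degeneracy detectable after the quotient, and no step of your outline supplies it.
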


The forward implication is the dg form of the passage to a quotient;
the new point is the converse, namely that non-degeneracy can also be
detected after taking the quotient.  Thus, in the $d$-stable setting,
the cluster quotient lowers the Calabi--Yau dimension by one but loses no
right Calabi--Yau structures.

Combining the Morita theorem with the Calabi--Yau correspondence
yields a variant of Amiot's conjecture.  This conjecture asks for a converse
to the generalized cluster-category construction: whether every algebraic
$2$-Calabi--Yau triangulated category with a cluster tilting object is a
cluster category associated with a Jacobi-finite quiver with potential
\cite[Question~2.20]{Ami11}.  Earlier structure results already established
such realizations for large classes of algebraic Calabi--Yau categories
\cite{KR08,ART11}. Under a pseudo-compact enhancement hypothesis and the existence of a right Calabi--Yau structure on the dg enhancement, Keller--Liu proved a dg-enhanced version of Amiot's conjecture and, more generally, obtained a description by deformed dg preprojective algebras \cite[6.2.1]{KL23}, using the structure theorem for
Calabi--Yau dg algebras and superpotentials \cite{Gin06,VdB15}.  Our cluster Morita theorem leads
to the following intrinsic statement.

\begin{Cor}[A variant of Amiot's conjecture, Theorem \ref{thm:Amiot-conj}]
Let $\C$ be a locally finite pretriangulated dg category over a field
with a $d$-cluster tilting object $M$, and put
$A=\tau^{\le0}\End_\C(M)$.  Then
\begin{enumerate}
\item $\add_{\dg}A$ is $d$-stable and there is a quasi-equivalence
\[
\C\simeq\C_{d,\dg}(A).
\]
\item If $\C$ is weak right $d$-Calabi--Yau, then
$\pvd_{\dg}A$ is weak right $(d+1)$-Calabi--Yau.
\item If $\C$ has a right $d$-Calabi--Yau structure, then
$\pvd_{\dg}A$ has a right $(d+1)$-Calabi--Yau structure.
\end{enumerate}
\end{Cor}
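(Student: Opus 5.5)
Part (1) follows from the two earlier results. Since $\C$ is pretriangulated, $\M:=\add_{\dg}M$ is a $d$-cluster tilting subcategory of $\tau^{\le0}\C$, and $\add_{\dg}A$ is quasi-equivalent to $\M$, because $A=\tau^{\le0}\End_\C(M)$. By the result of Mochizuki--Nakaoka \cite[7.23]{MN26}, every $d$-cluster tilting subcategory of a stable (pretriangulated) dg category is $d$-stable, so $\add_{\dg}A$ is $d$-stable. The Cluster Morita theorem, part (2), then gives $\C\simeq\C_{d,\dg}(\M)\simeq\C_{d,\dg}(A)$. We should also check that $A$ is locally finite, since (2) and (3) need it. This holds because $\C$ is locally finite: each $H^iA$ is either $H^i\End_\C(M)$ for $i\le0$ or zero.

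For part (3), I would apply the Calabi--Yau correspondence to $\M=\add_{\dg}A$, which is locally finite and $d$-stable. Through the quasi-equivalence of part (1), a right $d$-Calabi--Yau structure on $\C$ becomes one on $\C_{d,\dg}(A)$, because cyclic homology and non-degeneracy are invariant under quasi-equivalence. The bijection of Corollary \ref{cor:bij-CYstr} pulls it back to a right $(d+1)$-Calabi--Yau structure on $\D^b_{\fp,\dg}(A)$. It remains to identify $\D^b_{\fp,\dg}(A)$ with $\pvd_{\dg}A$. Since $A$ is locally finite and connective, a dg module with finitely presented bounded cohomology over $H^0A$ is exactly one with finite-dimensional total cohomology, so $\D^b_{\fp}(A)=\pvd A$. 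I expect an earlier lemma of the paper to state this; if not, it follows because finitely presented $H^0A$-modules are finite-dimensional, and conversely finite-dimensional modules over the finite-dimensional algebra $H^0A$ are finitely presented.

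Part (2) is the weak version, and it is the main obstacle. Here I would use the first statement of the Calabi--Yau correspondence (Theorem \ref{thm:delta-detects-nondegeneracy}). A weak right $d$-Calabi--Yau structure on $\C$ is a non-degenerate class in $D\HH_{-d}(\C)\cong D\HH_{-d}(\C_{d,\dg}(A))$. Because $\delta$ is an isomorphism, this class equals $\delta([x])$ for a unique $[x]\in D\HH_{-d-1}(\D^b_{\fp,\dg}(A))$. Since $\delta$ detects non-degeneracy, $[x]$ is non-degenerate, and it is therefore a weak right $(d+1)$-Calabi--Yau structure on $\pvd_{\dg}A$. The subtle point is that the paper's definition of ``weak right CY'' must be given by a non-degenerate dual Hochschild class. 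If it is instead defined as the existence of bifunctorial Serre-type isomorphisms $D\Hom(X,Y)\cong\Hom(Y,X[n])$ on the homotopy category, I would first show that such isomorphisms come from a dual Hochschild class. This is standard for locally finite (proper-type) dg categories via the trace map, and I would use it before running the argument above.
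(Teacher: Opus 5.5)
Your proposal is correct and follows the paper's route: the paper deduces the statement from its many-object version, getting (1) from Proposition~\ref{prop:cluster-tilting-d-stable} and Theorem~\ref{thm:morita-cluster-dgcat}, (2) from Theorem~\ref{thm:delta-detects-nondegeneracy}, and (3) from Corollary~\ref{cor:bij-CYstr}; it leaves implicit the local finiteness of $A$ and the identification $\D^b_{\fp,\dg}(A)\simeq\pvd_{\dg}A$, both of which you check. Your worry in (2) is unnecessary: the paper defines weak right $n$-Calabi--Yau as an isomorphism $D\A\cong\A[n]$ in $\D(\A^e)$, and via $\D(\A^e)(\A[n],D\A)\cong D\!\HH_{-n}(\A)$ this is exactly the existence of a non-degenerate class.
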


This result isolates the intrinsic Morita-theoretic part of the Amiot
paradigm: it constructs a cluster categorical realization directly from the
connective derived endomorphism algebra of a cluster tilting object and
lifts its right Calabi--Yau data.  It requires neither a pseudo-compact
enhancement nor a restriction on the ground field.  Its conclusion has a
different scope from that of Keller--Liu: it does not assert the
additional deformed-preprojective, or quiver-with-potential, normal form.
The many-object form, stated as
Theorem~\ref{thm:many-Amiot-conj}, shows more generally that the same
conclusions hold for a locally finite $d$-cluster tilting subcategory
$\M$, with $\D^b_{\fp,\dg}(\M)$ in place of $\pvd_{\dg}A$.

The paper is organized as follows.  Section~1 gives the preliminaries.
Sections~\ref{sec:Aus-corr} and~\ref{sec:cluster-cat} establish the Auslander
correspondence and the Morita theorem, respectively.  Section~\ref{sec:Amiot-conj} proves the
Calabi--Yau correspondence and the Amiot-type application.  Section~5
treats Koszul-dual and singularity-category interpretations.

\subsection*{Conventions}
Throughout the paper, $k$ denotes a commutative ring.  All dg categories
are small, $k$-linear, and cohomologically graded.  Unless otherwise specified, modules are
right modules, and subcategories are full.  We call $\A$
\emph{connective} if $H^i\A(X,Y)=0$ for all $i>0$ and all $X,Y\in\A$.
For every connective additive dg category $\A$,  we
assume that its homotopy category $H^0\A$ is idempotent complete.

When $k$ is a field, a dg category $\A$ is called {\it locally finite} if $H^i\A(X,Y)$ is finite dimensional for all $X,Y\in\A$ and all
$i\in\mathbb Z$, and it is \emph{proper} if
$\bigoplus_{i\in\mathbb Z}H^i\A(X,Y)$ is finite dimensional for every pair
$X,Y\in\A$. A dg algebra $A$ is called \emph{positive} if $H^iA=0$ for all
$i<0$ and $H^0A$ is semisimple.

\subsection*{Use of AI}
The author used ChatGPT 5.6 Sol Extra high and Codex 5.6 Sol Extra high as auxiliary tools to explore simplifications of part of the proof of Theorem \ref{thm:delta-detects-nondegeneracy}, as well as for language polishing and improving the exposition of earlier drafts. All mathematical arguments were independently verified and finalized by the author. The author takes full responsibility for the accuracy, originality, and integrity of the paper.

\section*{Acknowledgements}
The author expresses his gratitude to Xiaofa Chen, Norihiro Hanihara, Osamu Iyama, Gustavo Jasso, Bernhard Keller and Nao Mochizuki for fruitful discussions. This work was supported by the WINGS-FMSP program at the Graduate School of Mathematical Sciences, the University of Tokyo, and JSPS KAKENHI Grant Number JP25KJ0818.

\section{Preliminaries}

This section collects the homological tools used in the proof of the
Auslander correspondence.  We first study homological dimensions and
finite projective resolutions over a connective dg category.  Modules
over $H^0\A$ will be identified with the heart of the standard
$t$-structure on $\D(\A)$.

\begin{Def}
Let $\A$ be a connective dg category.
\begin{enumerate}
\item For $M\in\D(\A)$ and $n\in\mathbb{Z}$, we write $\fd_{\A}M\le n$ if $M\otimes_{\A}^{\mathbb{L}}N\in\D^{\ge-n}(k)$ holds for every $N\in\Mod H^0\A^{\op}\subseteq\D(\A^{\op})$.
\item For $M\in\D(\A)$ and $n\in\mathbb{Z}$, we write $\pd_{\A}M\le n$ if $\RHom_\A(M,N)\in\D^{\le n}(k)$ holds for every $N\in\Mod H^0\A\subseteq\D(\A)$.
\item For $n\ge0$, we write $\wgl\A\le n$ if $\fd_{\A}M\le n$ holds for every $M\in\Mod H^0\A\subseteq\D(\A)$.
\item For $n\ge0$, we write $\gl\A\le n$ if $\pd_{\A}M\le n$ holds for every $M\in\Mod H^0\A\subseteq\D(\A)$.
\end{enumerate}
\end{Def}

\begin{Rem}
When $\A=A$ is a connective dg algebra, the global dimension in (4)
agrees with the global dimension of $\D(A)$ relative to its standard
heart $\Mod H^0A$ in \cite[2.2 and 2.4]{Kos26}: indeed,
$\pd_A M\le n$ is equivalent to $\D(A)(M,N[i])=0$ for every
$N\in\Mod H^0A$ and $i>n$. Thus (4) is the many-object analogue of
the definition of Kostas.
\end{Rem}

\begin{Rem}
Note that the weak global dimension is left-right symmetric:
\[\wgl\A=\wgl\A^{\op}.\]
\end{Rem}

To control finitely presented cohomology throughout such resolutions,
we use the following dg version of coherence.

\begin{Def}
A connective dg category $\A$ is called
\begin{enumerate}
\item {\it right coherent} if $H^0\A$ is right coherent, and $H^{-n}\A(-,A)$ is finitely presented as a right $H^0\A$-module for every $A\in\A$ and $n\ge0$.
\item {\it left coherent} if $\A^{\op}$ is right coherent.
\item {\it coherent} if $\A$ is both right coherent and left coherent.
\end{enumerate}
\end{Def}

Remark that $\A$ is right coherent if and only if $H^0\A$ is right coherent and $\per\A\subseteq\D^-_{\fp}(\A)$. In this form, right
coherence leads naturally to the following projective resolution
criterion.

\begin{Lem}\label{projrescoh}
Let $\A$ be a right coherent connective dg category. Then for $M\in\D^{\le0}(\A)$, the following conditions are equivalent.
\begin{enumerate}
\item $M\in\D_{\fp}^{\le0}(\A)$
\item There exist exact triangles
\[M_{i+1}\to P_i\to M_i\dashrightarrow\]
in $\D(\A)$ for $i\ge0$ such that $M_0=M$, $M_i\in\D^{\le0}(\A)$, and
$P_i\in\add(\A)$.
\end{enumerate}
\end{Lem}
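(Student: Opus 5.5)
The plan is to prove both implications by working with the standard $t$-structure on $\D(\A)$, whose heart is $\Mod H^0\A$, and with the description of right coherence as ``$H^0\A$ right coherent and $\per\A\subseteq\D^-_{\fp}(\A)$''. Here $\D^{\le0}_{\fp}(\A)$ means that every $H^{-n}M$ is a finitely presented $H^0\A$-module.

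\emph{(2)$\Rightarrow$(1).} I will show by induction on $n\ge0$ the statement: $H^{-j}M_i$ is finitely presented for all $i\ge0$ and all $j\le n$.
\begin{itemize}
\item The triangle gives a long exact sequence
\[H^{-j}M_{i+1}\to H^{-j}P_i\to H^{-j}M_i\to H^{-j+1}M_{i+1}\to H^{-j+1}P_i.\]
\item For $j=0$, since $M_{i+1}\in\D^{\le0}$, this reads $H^0M_{i+1}\to H^0P_i\to H^0M_i\to 0$. So $H^0M_i$ is finitely generated for every $i$.
\item Applying this to $i+1$, $H^0M_{i+1}$ is finitely generated too. Hence $H^0M_i$ is a cokernel of a map from a finitely generated module to a finitely presented one, and is therefore finitely presented.
\item For the inductive step, the modules $H^{-j}P_i$ are finitely presented by coherence, and $H^{-j+1}M_{i+1}$ is finitely presented by the induction hypothesis. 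The kernel of $H^{-j+1}M_{i+1}\to H^{-j+1}P_i$ is then finitely presented, since the category $\mod H^0\A$ is abelian when $H^0\A$ is right coherent.
\item The image of $H^{-j}P_i\to H^{-j}M_i$ is a quotient of a finitely presented module by the image of $H^{-j}M_{i+1}$. That image is finitely generated, because by the same argument one level up $H^{-j}M_{i+1}$ is finitely generated.
\item To keep this bookkeeping clean, I will induct on the joint statement ``$H^{-j}M_i$ finitely generated for all $i$'' and ``finitely presented for all $i$''. Closure of $\mod H^0\A$ under extensions then gives finite presentation of $H^{-j}M_i$.
\end{itemize}

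\emph{(1)$\Rightarrow$(2).} Given $M=M_0\in\D^{\le0}_{\fp}(\A)$, I construct the triangles inductively, maintaining the invariant $M_i\in\D^{\le0}_{\fp}(\A)$.
\begin{itemize}
\item Since $H^0M_i$ is finitely generated, choose an epimorphism $H^0P_i\to H^0M_i$ with $P_i\in\add\A$. Lift it to a morphism $P_i\to M_i$, using $\Hom(P_i,M_i)\cong H^0M_i(P_i)$ by Yoneda.
\item Define $M_{i+1}$ as the cocone of $P_i\to M_i$. Surjectivity on $H^0$ together with $P_i,M_i\in\D^{\le0}$ gives $M_{i+1}\in\D^{\le0}$.
\item The long exact sequence shows that $H^{-j}M_{i+1}$ is an extension of $\Ker(H^{-j}P_i\to H^{-j}M_i)$ by $\Cok(H^{-j-1}P_i\to H^{-j-1}M_i)$.
\item Both pieces are finitely presented, since $P_i\in\per\A\subseteq\D^-_{\fp}(\A)$ and $\mod H^0\A$ is an abelian subcategory closed under extensions. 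Hence $M_{i+1}\in\D^{\le0}_{\fp}(\A)$, which preserves the invariant.
\end{itemize}

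The main point to check carefully is that $\mod H^0\A$ is an exact abelian subcategory of $\Mod H^0\A$ closed under kernels, cokernels and extensions when $H^0\A$ is right coherent, in the many-object setting. This is standard: finitely presented functors over a coherent additive category form such a subcategory. The rest is the long exact sequence bookkeeping described above.
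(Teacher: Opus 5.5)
Your proof is correct. The direction (1)$\Rightarrow$(2) is the paper's argument. The paper takes a right $\add\A$-approximation $P_i\to M_i$ and then simply notes that $\D^-_{\fp}(\A)$ is a triangulated subcategory containing $\add\A$, because $\mod H^0\A$ is wide. Your kernel/cokernel extension step is exactly the proof of that fact.

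For (2)$\Rightarrow$(1) you take a genuinely different route. The paper never tracks the cohomology of the intermediate $M_i$. Using the octahedral axiom, it splices the first $r+1$ triangles into a single triangle $M_{r+1}[r]\to Q_r\to M\dashrightarrow$ with $Q_r\in\add(\A)*\add(\A)[1]*\cdots*\add(\A)[r]\subseteq\D^-_{\fp}(\A)$. Since $M_{r+1}[r]\in\D^{\le -r}(\A)$, the map $H^{-n}Q_r\to H^{-n}M$ is an isomorphism for $n<r$, so finite presentation is inherited from $Q_r$. In that argument the triangulated closure is only ever applied to objects already known to lie in $\D^-_{\fp}(\A)$.

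In your triangles, by contrast, two of the three vertices ($M_i$ and $M_{i+1}$) are unknown. That is why you need the two-stage induction, carried simultaneously for all $i$. The image of $H^{-j}P_i$ in $H^{-j}M_i$ is $H^{-j}P_i$ modulo the image of $H^{-j}M_{i+1}$, so finite presentation at index $i$ requires finite generation at index $i+1$. Finite generation in turn follows from the induction hypothesis at degree $-(j-1)$ together with the fact that quotients and extensions of finitely generated modules are finitely generated. With this ordering, your bookkeeping closes correctly.

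The trade-off is as follows. The paper's argument is shorter and uses only that $\D^-_{\fp}(\A)$ is triangulated and contains $\add\A$. It also produces objects in $\add(\A)*\cdots*\add(\A)[r]$, and the same splicing is reused at the end of the proof of Lemma~\ref{lem:fp-flat-projective} and in realizing homotopy $d$-kernels. Your argument avoids the octahedral axiom and stays entirely inside long exact sequences of $H^0\A$-modules. The price is a more delicate induction that must control all the $M_i$ at once.
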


\begin{proof}
Since $H^0\A$ is right coherent, $\mod H^0\A\subseteq\Mod H^0\A$ is a wide
subcategory. Thus $\D^-_{\fp}(\A)\subseteq\D(\A)$ is a triangulated subcategory. Moreover, the right coherence of $\A$ implies $\add \A\subseteq\D^{\le0}_{\fp}(\A)$.

Suppose first that $M\in\D_{\fp}^{\le0}(\A)$.  We construct the triangles
inductively.  Set $M_0=M$.  If $M_i\in\D_{\fp}^{\le0}(\A)$, take a right $\add\A$-approximation
\[
P_i\longrightarrow M_i,
\]
which exists since $H^0M_i$ is finitely generated as an $H^0\A$-module. Complete it to an exact triangle
\[
M_{i+1}\longrightarrow P_i\longrightarrow M_i\dashrightarrow.
\]
Since $M_i,P_i\in\D^-_{\fp}(\A)$, we have $M_{i+1}\in\D^-_{\fp}(\A)$. The surjectivity of $H^0(P_i)\to H^0(M_i)$ and the long exact cohomology
sequence show that $M_{i+1}\in\D^{\le0}(\A)$.

Conversely, suppose that the triangles in (2) are given.  Repeated use
of the octahedral axiom yields, for every $r\ge0$, an object
\[
Q_r\in\add(\A)*\add(\A)[1]*\cdots*\add(\A)[r]
\]
and an exact triangle
\[
M_{r+1}[r]\longrightarrow Q_r\longrightarrow M\dashrightarrow.
\]
Indeed, one may take $Q_0=P_0$; if $Q_r$ has been constructed, apply the
octahedral axiom to the morphisms
\[
P_{r+1}[r]\longrightarrow M_{r+1}[r]\longrightarrow Q_r
\]
to obtain $Q_{r+1}\in Q_r*\add(\A)[r+1]$ and the corresponding triangle.
Since $M_{r+1}\in\D^{\le0}(\A)$, the morphism $Q_r\to M$ induces an
isomorphism
\[
H^{-n}(Q_r)\xrightarrow{\sim}H^{-n}(M)
\qquad\text{for all }0\le n<r.
\]
Since $Q_r\in\D^-_{\fp}(\A)$ holds, this implies $H^{-n}(M)\in\mod H^0\A$ for $0\le n<r$. Thus we obtain $M\in\D^-_{\fp}(\A)$.
\end{proof}

The preceding lemma constructs possibly infinite resolutions.  The
next result characterizes when such a resolution terminates after a
prescribed number of steps; it will be the main link between weak
global dimension and finite semi-free resolutions.

\begin{Lem}\label{lem:fp-flat-projective}
Let $\A$ be a right coherent connective dg category and let
$M\in\D_{\fp}^{\le0}(\A)$.  For every $n\ge0$, the following conditions are equivalent.
\begin{enumerate}
\item $\fd_{\A}M\le n$.
\item $\pd_{\A}M\le n$.
\item The object $M$ is isomorphic in $\D(\A)$ to a finite semi-free dg module
whose generators are representable modules placed in degrees $0,\ldots,n$; equivalently,
\[
M\in\add(\A)*\add(\A)[1]*\cdots *\add(\A)[n].
\]
\end{enumerate}
\end{Lem}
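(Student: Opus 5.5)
I would prove the cycle (3)$\Rightarrow$(2)$\Rightarrow$(1)$\Rightarrow$(3). The implications (3)$\Rightarrow$(2) and (3)$\Rightarrow$(1) are dévissage. For $P\in\add\A$ and $N\in\Mod H^0\A$, we have $\RHom_\A(P,N)=N(P)$, which is concentrated in degree $0$. Likewise $P\otimes^{\mathbb L}_\A N'$ is concentrated in degree $0$ for $N'\in\Mod H^0\A^{\op}$. Hence every object of $\add(\A)[j]$ has $\RHom_\A(-,N)\in\D^{\le j}(k)$ and $-\otimes^{\mathbb L}_\A N'\in\D^{\ge -j}(k)$. Both conditions are stable under extensions, by the long exact sequences of the triangles. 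Induction along the filtration $\add(\A)*\cdots*\add(\A)[n]$ then gives both $\pd_\A M\le n$ and $\fd_\A M\le n$.

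(2)$\Rightarrow$(1) would then follow by passing through (3), so the real content is (1)$\Rightarrow$(3). I would use the resolution of Lemma~\ref{projrescoh}. Since $M\in\D^{\le0}_{\fp}(\A)$, there are triangles $M_{i+1}\to P_i\to M_i\dashrightarrow$ with $M_i\in\D^{\le0}_{\fp}(\A)$ and $P_i\in\add\A$. The octahedral argument in that proof gives a triangle
\[M_{n+1}[n]\to Q_n\to M\dashrightarrow,\qquad Q_n\in\add(\A)*\cdots*\add(\A)[n].\]
It then suffices to show that $M_n$ is isomorphic to an object of $\add\A$. Indeed, stopping the octahedral construction one step earlier gives $M\in Q_{n-1}*M_n[n]$ with $Q_{n-1}\in\add(\A)*\cdots*\add(\A)[n-1]$, which yields (3).

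To see that $M_n\in\add\A$, I would use dimension shifting. Each $P_i$ has flat dimension $0$, so the triangles give $\fd M_n\le 0$. Concretely, $M_n\otimes^{\mathbb L}_\A N'\in\D^{\ge0}(k)$ for all $N'\in\Mod H^0\A^{\op}$, because $\fd M\le n$ and each step shifts the bound by one. I would then show that an object $X\in\D^{\le0}_{\fp}(\A)$ with $\fd X\le0$ lies in $\add\A$, arguing as follows.
\begin{itemize}
\item Take a triangle $Y\to P\to X\dashrightarrow$ with $P\to X$ a right $\add\A$-approximation, so that $H^0P\to H^0X$ is surjective and $Y\in\D^{\le0}_{\fp}(\A)$.
\item The condition $\fd X\le0$ means $H^{-1}(X\otimes^{\mathbb L}N')=0$ for all $N'$. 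Since $X\in\D^{\le0}$, we have $H^0(X\otimes^{\mathbb L}N')=H^0X\otimes_{H^0\A}N'$.
\item It follows that $H^0X$ is a finitely presented flat $H^0\A$-module, hence projective.
\item Moreover $X\otimes^{\mathbb L}N'\simeq H^0X\otimes N'$ is concentrated in degree $0$. Choosing $P$ with $H^0P\cong H^0X$, which is possible by projectivity and idempotent completeness of $H^0\A$, the map $P\otimes^{\mathbb L}N'\to X\otimes^{\mathbb L}N'$ is an isomorphism for every $N'$.
\item Hence $Y\otimes^{\mathbb L}_\A N'=0$ for all $N'\in\Mod H^0\A^{\op}$.
\end{itemize}

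The main obstacle is the last step: deducing $Y=0$ from $Y\in\D^{\le0}(\A)$ and $Y\otimes^{\mathbb L}_\A N'=0$ for all $N'$. I would take $N'=H^0\A$ viewed as an $\A^{\op}$-module. If $H^{-m}Y$ is the top nonzero cohomology, then $H^{-m}(Y\otimes^{\mathbb L}H^0\A)=H^{-m}Y\otimes_{H^0\A}H^0\A=H^{-m}Y$. This must vanish, so $Y$ has no nonzero top cohomology. Coherence gives that $Y$ is bounded above, so this forces $Y=0$ and therefore $X\cong P\in\add\A$.

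The equivalence between a finite semi-free module with generators in degrees $0,\ldots,n$ and membership in $\add(\A)*\cdots*\add(\A)[n]$ is standard: one unfolds iterated cones, using idempotent completeness of $H^0\A$ to absorb summands.
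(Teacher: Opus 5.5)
\textbf{Gap: the implication (2)$\Rightarrow$(1) is circular.} You write that it ``would follow by passing through (3)'', but you never prove (2)$\Rightarrow$(3). What you actually establish is (1)$\Leftrightarrow$(3) and (3)$\Rightarrow$(2). So the announced cycle does not close, and condition (2) is never shown to imply the others. This direction matters: the subsequent corollary needs it to pass from $\pd_\A M\le n$ on $\mod H^0\A$ to $\wgl\A\le n$, which is how Proposition~\ref{prop:homotopy-kernels} concludes.

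It is easy to supply in either of two ways.
\begin{enumerate}
\item The paper sets $\B=H^0\A$ and uses $\fd_\A X=\fd_\B\overline X$ and $\pd_\A X=\pd_\B\overline X$, where $\overline X=X\otimes^{\mathbb L}_\A\B$, together with the classical inequality $\fd_\B\le\pd_\B$. Directly: for $N'\in\Mod H^0\A^{\op}$, the character module $N'^{\vee}=\Hom_{\mathbb Z}(N',\mathbb Q/\mathbb Z)$ lies in $\Mod H^0\A$, and $\Hom_{\mathbb Z}(M\otimes^{\mathbb L}_\A N',\mathbb Q/\mathbb Z)\simeq\RHom_\A(M,N'^{\vee})$. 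Hence $\pd_\A M\le n$ forces $H^{-i}(M\otimes^{\mathbb L}_\A N')=0$ for $i>n$.
\item Rerun your own argument with $\pd$ in place of $\fd$. Dimension shifting gives $\pd_\A M_n\le0$. Then $\Ext^1_{H^0\A}(H^0M_n,N)$ embeds into $H^1\RHom_\A(M_n,N)=0$, so $H^0M_n$ is projective. The cone $Y$ of $P\to M_n$ then satisfies $\RHom_\A(Y,N)=0$ for every $N\in\Mod H^0\A$. If $Y\neq0$, take its top nonzero cohomology $H^{-m}Y$; testing against $N=H^{-m}Y$ gives $H^{m}\RHom_\A(Y,H^{-m}Y)\cong\Hom_{H^0\A}(H^{-m}Y,H^{-m}Y)\neq0$, a contradiction. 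So $Y=0$.
\end{enumerate}

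\textbf{The rest is correct and takes a different route from the paper.} Your (3)$\Rightarrow$(1), (3)$\Rightarrow$(2) and (1)$\Rightarrow$(3) all hold, but your last step for (1)$\Rightarrow$(3) differs from the paper's. The paper base-changes to $\B$ to get $\pd_\A M_n\le0$. It then shows that the connecting morphism $M_n\to M_{n+1}[1]$ vanishes, using left completeness, the Milnor sequence and Mittag--Leffler, so that $M_n$ splits off $P_n$. You instead choose $P\to M_n$ inducing an isomorphism on $H^0$ and kill its cone $Y$ by a Nakayama argument with $N'=H^0\A$. This is shorter and avoids the homotopy-limit argument. One small misattribution: $Y$ has a top cohomology because $Y\in\D^{\le0}(\A)$, not because of coherence.
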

\begin{proof}
Put $\B=H^0\A$ and $\overline X:=X\otimes_{\A}^{\mathbb L}\B$ for
$X\in\D^{\le0}(\A)$.  Associativity and adjunction give
\[
\overline X\otimes_{\B}^{\mathbb L}L
 \simeq X\otimes_{\A}^{\mathbb L}L,
\qquad
\RHom_{\B}(\overline X,N)\simeq\RHom_{\A}(X,N)
\]
for $L\in\Mod\B^{\op}$ and $N\in\Mod\B$.  In particular,
\begin{equation}\label{eq:base-change-fd-pd}
\fd_{\A}X=\fd_{\B}\overline X,
\qquad
\pd_{\A}X=\pd_{\B}\overline X.
\end{equation}

We first prove $(3)\Rightarrow(2)\Rightarrow(1)$.  If $P\in\add\A$ and
$N\in\Mod\B$, then $\RHom_{\A}(P,N)$ is concentrated in degree zero.
Thus (3) implies (2).  The implication $(2)\Rightarrow(1)$ follows from
\eqref{eq:base-change-fd-pd} and the usual inequality
$\fd_{\B}\overline M\le\pd_{\B}\overline M$.

We prove $(1)\Rightarrow(3)$.  Take the exact triangles given by the
preceding lemma:
\[
M_{i+1}\longrightarrow P_i\longrightarrow M_i\dashrightarrow,
\qquad M_0=M,\quad P_i\in\add(\A),\quad
M_i\in\D_{\fp}^{\le0}(\A).
\]
Dimension shifting for the derived tensor product shows that
$\fd_{\A}M_n\le0$.  Hence
\[
\fd_{\B}\overline{M_n}=\fd_{\A}M_n\le0.
\]
Since $\overline{M_n}\in\D_{\fp}^{\le0}(\B)$, it follows that
$\overline{M_n}\in\add\B$: indeed, $\overline{M_n}$ is concentrated in
degree zero and $H^0(\overline{M_n})$ is a finitely presented flat
$\B$-module, hence is projective.  Therefore
\[
\pd_{\A}M_n=\pd_{\B}\overline{M_n}\le0.
\]

It remains to prove that $M_n\in\add\A$.  Put
$T=M_n$ and $X=M_{n+1}[1]\in\D_{\fp}^{<0}(\A)$.  For $r\ge1$, set
$X_r=\tau^{\ge-r}X$.  Since $X_r$ is a finite extension of objects
$H^{-j}(X)[j]$ with $1\le j\le r$, the inequality $\pd_{\A}T\le0$
implies
\[
\D(\A)(T,X_r)=0.
\]
Moreover, the truncation triangle
\[
H^{-r-1}(X)[r+1]\longrightarrow X_{r+1}\longrightarrow X_r
 \longrightarrow H^{-r-1}(X)[r+2]
\]
and the vanishing
\[
\D(\A)\bigl(T[1],H^{-r-1}(X)[r+2]\bigr)
 =\D(\A)\bigl(T,H^{-r-1}(X)[r+1]\bigr)=0
\]
show that the transition map
\[
\D(\A)(T[1],X_{r+1})\longrightarrow\D(\A)(T[1],X_r)
\]
is surjective.  Thus this inverse system satisfies the
Mittag--Leffler condition.

The standard $t$-structure on $\D(\A)$ is left complete, so
$X\simeq\operatorname{holim}_{r}X_r$.  The Milnor exact sequence and
the Mittag--Leffler condition give
\[
\D(\A)(T,X)
 \cong\varprojlim_r\D(\A)(T,X_r)=0.
\]
Thus the connecting morphism $M_n\to M_{n+1}[1]$ vanishes, and the
triangle
\[
M_{n+1}\longrightarrow P_n\longrightarrow M_n\dashrightarrow
\]
splits.  Hence $M_n$ is a direct summand of $P_n$, so
$M_n\in\add\A$.  Splicing the first $n$ triangles yields (3).
\end{proof}

For modules concentrated in degree zero, the preceding lemma gives the
following criterion for the weak global dimension.

\begin{Cor}
For a right coherent connective dg category $\A$ and $n\ge0$, the following conditions are equivalent.
\begin{enumerate}
\item $\wgl\A\le n$
\item $\pd_{\A}M\le n$ for every $M\in\mod H^0\A$.
\end{enumerate}
\end{Cor}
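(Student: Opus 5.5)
Both directions reduce to Lemma~\ref{lem:fp-flat-projective}, applied to modules concentrated in degree zero. The key observation is that $\mod H^0\A\subseteq\D^{\le0}_{\fp}(\A)$ under the identification of $\Mod H^0\A$ with the heart of the standard $t$-structure. Indeed, a finitely presented $H^0\A$-module $M$, viewed as an object of $\D(\A)$, has $H^0M=M$ finitely presented and all other cohomology zero. So every $M\in\mod H^0\A$ satisfies the hypothesis of Lemma~\ref{lem:fp-flat-projective}, and for such $M$ we get $\fd_\A M\le n\iff\pd_\A M\le n$.

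For $(1)\Rightarrow(2)$, assume $\wgl\A\le n$ and take $M\in\mod H^0\A$. By definition $\fd_\A M\le n$. The equivalence $(1)\Leftrightarrow(2)$ of Lemma~\ref{lem:fp-flat-projective} then gives $\pd_\A M\le n$.

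For $(2)\Rightarrow(1)$, assume $\pd_\A M\le n$ for all finitely presented $M$. The lemma then gives $\fd_\A M\le n$ for every $M\in\mod H^0\A$. It remains to pass from finitely presented modules to arbitrary ones, and this is the only step needing care. Here I would use the fact that every $H^0\A$-module $M$ is a filtered colimit of finitely presented modules, $M\simeq\varinjlim_\lambda M_\lambda$, and that this colimit is exact in $\Mod H^0\A$. I would then check two things:
\begin{itemize}
\item[(a)] The colimit is also a homotopy colimit in $\D(\A)$ lying in the heart. Filtered colimits of dg modules are exact, so computing the colimit in $\mathcal{C}(\A)$ gives a dg module with $H^0=\varinjlim H^0$ and no other cohomology.
\item[(b)] $-\otimes^{\mathbb L}_\A N$ commutes with filtered (homotopy) colimits, and filtered colimits of complexes are exact, so $H^i$ commutes with them.
\end{itemize}
Granting these, $H^{i}(M\otimes^{\mathbb L}_\A N)\cong\varinjlim_\lambda H^i(M_\lambda\otimes^{\mathbb L}_\A N)=0$ for $i<-n$ and every $N\in\Mod H^0\A^{\op}$. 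Hence $\fd_\A M\le n$ for all $M\in\Mod H^0\A$, that is, $\wgl\A\le n$.

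For (a), I would realize the colimit through a filtered diagram in $\Mod H^0\A$, regarded as dg $\A$-modules via $\A\to\tau^{\ge0}\A\simeq H^0\A$. Alternatively, I would use the identification $M\otimes^{\mathbb L}_\A N\simeq \overline{M}\otimes^{\mathbb L}_{H^0\A}N$ from the proof of Lemma~\ref{lem:fp-flat-projective}, together with the classical fact that Tor over an ordinary ring commutes with filtered colimits. The second route is cleaner, so I would use it.
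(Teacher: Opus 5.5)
\textbf{Overall.} Your reduction is the one the paper intends. The paper gives no argument beyond noting that Lemma~\ref{lem:fp-flat-projective} applies to $M\in\mod H^0\A\subseteq\D^{\le0}_{\fp}(\A)$. So your $(1)\Rightarrow(2)$, and the finitely presented half of $(2)\Rightarrow(1)$, are exactly what is meant. Your passage from finitely presented to arbitrary modules fills in a step the paper leaves implicit. The problem is the route you say you would actually use for that step.

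\textbf{The preferred route fails.} In the proof of Lemma~\ref{lem:fp-flat-projective}, $\overline M$ means $M\otimes^{\mathbb L}_\A H^0\A$. For $M\in\Mod H^0\A$ this is not $M$; in general it is an unbounded complex. Take $\A=k[\epsilon]/(\epsilon^2)$ with $|\epsilon|=-1$, which is right coherent, and $M=N=k$. Then $\overline M\simeq M\otimes^{\mathbb L}_\A N\simeq\bigoplus_{m\ge0}k[2m]$, whereas $\operatorname{Tor}^{H^0\A}_{>0}(k,k)=0$. So $M\otimes^{\mathbb L}_\A N$ is not classical Tor over $H^0\A$. The classical fact only helps once you know $\overline{\varinjlim_\lambda M_\lambda}\simeq\varinjlim_\lambda\overline{M_\lambda}$. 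That is your point (a) together with the fact that $-\otimes^{\mathbb L}_\A H^0\A$ commutes with filtered colimits. The ``cleaner'' route therefore bypasses nothing, and as written it computes the wrong object.

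\textbf{Use your first route, with one correction.} Truncation $\tau^{\ge0}$ is not compatible with composition, so in general there is no dg functor $\A\to\tau^{\ge0}\A$.
\begin{itemize}
\item View $H^0\A$-modules as dg modules by restriction along the dg functor $\tau^{\le0}\A\to H^0\A$.
\item Identify $\D(\tau^{\le0}\A)\simeq\D(\A)$ via the quasi-equivalence $\tau^{\le0}\A\to\A$; this is how $\Mod H^0\A$ sits in the heart.
\item Restriction commutes with filtered colimits.
\item Filtered colimits of dg modules are exact, so the strict colimit is a homotopy colimit and stays in the heart.
\item $-\otimes^{\mathbb L}_\A N$, computed for instance by a bar resolution, commutes with filtered colimits.
\end{itemize}
Taking $H^i$ then gives $\fd_\A M\le n$ for all $M\in\Mod H^0\A$.
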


We now pass from these module-theoretic preparations to higher kernels.
Bounded one-sided twisted complexes provide the appropriate dg analogue
of finite complexes in an additive category.

\begin{Def}\label{def:homotopy-d-kernel}
Let $\A$ be an additive dg category and let $d\ge1$.
\begin{enumerate}
\item A bounded one-sided twisted complex in $\A$ is a pair
$X^\bullet=((X^i)_{i\in\mathbb Z},(d_X^{i,j})_{i,j\in\mathbb Z})$, where
$X^i\in\A$ vanishes for all but finitely many $i$, and
\[
d_X^{i,j}\in\A(X^i,X^j)^{i-j+1},\qquad d_X^{i,j}=0\quad(i\ge j),
\]
such that
\[
d_{\A}(d_X^{i,j})+(-1)^j\sum_{k\in\mathbb Z}
d_X^{k,j}\circ d_X^{i,k}=0.
\]
A degree-$m$ morphism $f^\bullet\colon X^\bullet\to Y^\bullet$ is a
family $f^{i,j}\in\A(X^i,Y^j)^{i-j+m}$, with differential
\[
(df)^{i,j}=(-1)^jd_{\A}(f^{i,j})+
\sum_{k\in\mathbb Z}\left(d_Y^{k,j}\circ f^{i,k}
-(-1)^mf^{k,j}\circ d_X^{i,k}\right).
\]
Together with matrix composition, these form a dg category
$\operatorname{tw}(\A)$.  We write
$\operatorname{htw}(\A)=H^0\operatorname{tw}(\A)$ and regard an object
$A\in\A$ as the twisted complex concentrated in degree zero; see
\cite[2.3]{MN26}.

\item Let $X^\bullet\in\operatorname{tw}(\A)$ be concentrated in
degrees $0,\ldots,d+1$.  Following
\cite[3.1, 3.21]{MN26}, it is called \emph{left
$d$-exact} if
\[
\operatorname{htw}(\A)(A[-i],X^\bullet)=0
\]
for every $A\in\A$ and every $i\le d$.  For
$f\colon X^d\to X^{d+1}$ in $Z^0\A$, such a twisted complex is called a
\emph{homotopy $d$-kernel} of $f$ if $d_X^{d,d+1}=f$.
\end{enumerate}
\end{Def}

The following proposition is the dg counterpart of the corresponding
characterization for ordinary additive categories: an idempotent
complete additive category $\C$ has $d$-kernels if and only if $\C$ is
right coherent and $\gl(\mod\C)\le d+1$; see
\cite[5]{Gul25}.  It translates the existence of homotopy
$d$-kernels into the homological conditions established above.

\begin{Prop}\label{prop:homotopy-kernels}
Let $d\ge1$ be an integer. For an additive connective dg category $\A$ such that $H^0\A$ is idempotent complete, the following conditions are equivalent.
\begin{enumerate}
\item Each morphism in $Z^0\A$ has a homotopy $d$-kernel.
\item $\A$ is right coherent and $\wgl\A\le d+1$ holds.
\end{enumerate}
\end{Prop}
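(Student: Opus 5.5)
The plan is to translate the existence of a homotopy $d$-kernel into a statement about objects of $\per\A$, and then apply Lemma~\ref{lem:fp-flat-projective}. For $X^\bullet\in\operatorname{tw}(\A)$ concentrated in degrees $0,\ldots,d+1$, let $\operatorname{Tot}X^\bullet\in\per\A$ be its image under the Yoneda embedding. It is an iterated extension of the objects $X^j[-j]$. Since $\A$ is connective, $\operatorname{Tot}X^\bullet\in\D^{\le d+1}(\A)$. The condition $\operatorname{htw}(\A)(A[-i],X^\bullet)=0$ for all $A\in\A$ and $i\le d$ says $H^i(\operatorname{Tot}X^\bullet)=0$ for $i\le d$. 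Hence left $d$-exactness is equivalent to
\[
\operatorname{Tot}X^\bullet\cong C[-d-1],\qquad C=\Cok\bigl(H^0\A(-,X^d)\xrightarrow{H^0 f}H^0\A(-,X^{d+1})\bigr).
\]
Shifting by $d+1$, a homotopy $d$-kernel of $f$ is the same as a presentation
\[
C\in\add\A*\add\A[1]*\cdots*\add\A[d+1]
\]
whose first two terms are $X^{d+1}$ and $X^d$, with connecting map $f$. I would first record this dictionary as a lemma. To get $d_X^{d,d+1}=f$ on the nose rather than up to homotopy, I would use that $\operatorname{tw}(\A)$ is a pretriangulated hull of $\A$ and that a homotopy in the top component can be absorbed by a homotopy equivalence of twisted complexes.

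For $(2)\Rightarrow(1)$, let $f\colon X^d\to X^{d+1}$ be in $Z^0\A$. Then $C\in\mod H^0\A\subseteq\D^{\le0}_{\fp}(\A)$, since $\A$ is right coherent. I would run the construction in the proof of Lemma~\ref{projrescoh} with prescribed first choices. Take $P_0=X^{d+1}\to C$, and let $M_1$ be the cocone. The map $X^d\to X^{d+1}$ lifts to $X^d\to M_1$ and is surjective on $H^0$, because $H^0M_1=\Ker(H^0\A(-,X^{d+1})\to C)$ is the image of $H^0 f$. Take this as $P_1$ and continue with arbitrary approximations. Since $\wgl\A\le d+1$, Lemma~\ref{lem:fp-flat-projective} applies: its proof shows that $M_{d+1}\in\add\A$ for any such resolution. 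So the spliced triangles exhibit $C$ in $\add\A*\cdots*\add\A[d+1]$ with the prescribed initial segment. The dictionary then yields the homotopy $d$-kernel.

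For $(1)\Rightarrow(2)$, I would proceed in three steps.
\begin{enumerate}
\item \emph{Coherence of $H^0\A$.} The totalization is concentrated in degree $d+1$ and $d\ge1$, so the low-degree part of the filtration gives an exact sequence
\[
H^0\A(-,X^{d-1})\to H^0\A(-,X^d)\to H^0\A(-,X^{d+1}).
\]
Hence $\Ker H^0 f$ is finitely generated for every $f$. As $H^0\A$ is additive and idempotent complete, this is right coherence of $H^0\A$.
\item \emph{Finite presentation of each $H^{-n}\A(-,B)$.} I expect this step to be the main obstacle. I would apply (1) to $f=0\colon B\to B$ (or to $0\to B$). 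Then $\operatorname{Tot}X^\bullet\cong C[-d-1]$ with $C$ a finitely presented module, and so $B$ lies in a finite iterated extension of $C$ and of representables $X^j[-j]$ with $j<d$. Comparing the long exact cohomology sequences of this filtration in negative degrees, I would argue by induction on $n$ that $H^{-n}\A(-,B)$ is an extension of kernels and cokernels of maps between modules already known to be finitely presented, namely $H^{-m}\A(-,X^j)$ for $m<n$ and $H^0$ terms. Wideness of $\mod H^0\A$ from step 1 then closes the induction. If the index bookkeeping fails, I would instead prove directly that $\per\A\subseteq\D^-_{\fp}(\A)$ using truncations of the twisted complex.
\item \emph{Weak global dimension.} Once $\A$ is right coherent, every $M\in\mod H^0\A$ is a cokernel $C$ of some $H^0 f$ between representables, by idempotent completeness. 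By the dictionary, $M\in\add\A*\cdots*\add\A[d+1]$, so $\pd_\A M\le d+1$ by Lemma~\ref{lem:fp-flat-projective}. The Corollary following that lemma then gives $\wgl\A\le d+1$.
\end{enumerate}
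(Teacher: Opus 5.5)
Your proposal is correct and follows the paper's proof essentially step for step: the same dictionary $\operatorname{Tot}(h_X)[d+1]\simeq\Cok H^0\A(-,f)$, the same resolution with prescribed $P_0=h_{X^{d+1}}$ and $P_1=h_{X^d}$ combined with Lemma~\ref{lem:fp-flat-projective} for $(2)\Rightarrow(1)$, and, for $(1)\Rightarrow(2)$, the same induction on $H^{-n}\A(-,B)$ via a homotopy $d$-kernel of $0\to B$, followed by the Corollary. The one deviation is how you get right coherence of $H^0\A$: you read off from $H^d\operatorname{Tot}(h_X)=0$ that $H^0\A(-,X^{d-1})\to H^0\A(-,X^d)\to H^0\A(-,X^{d+1})$ is exact, so $d_X^{d-1,d}$ is a weak kernel of $f$, whereas the paper goes through $H^{-1}h_A\cong H^{-2}E_A$ and $H^{-1}C_f\cong H^{-2}E_f$; your route is valid and lets the induction treat all $n\ge1$ uniformly, and its index bookkeeping does close because each block $h_{X^j}[d-j]$ has shift $d-j\ge1$, so the fallback is not needed.
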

\begin{proof}
Put $\B=H^0\A$ and write $h_A=\A(-,A)$ for $A\in\A$.  For a twisted
complex $X^\bullet$ concentrated in degrees $0,\ldots,d+1$, its Yoneda
totalization is
\[
\operatorname{Tot}(h_X):=\bigoplus_{i=0}^{d+1}h_{X^i}[-i],
\]
whose differential is induced by the $d_X^{i,j}$.  The dg Yoneda lemma
gives
\[
H^i\operatorname{Tot}(h_X)(A)
 \cong\operatorname{htw}(\A)(A[-i],X^\bullet).
\]
Since $\A$ is connective, $\operatorname{Tot}(h_X)\in\D^{\le d+1}(\A)$,
and its cohomology in degree $d+1$ is
$\Cok\B(-,d_X^{d,d+1})$.  Consequently, $X^\bullet$ is left $d$-exact
if and only if
\begin{equation}\label{eq:homotopy-kernel-resolution}
\operatorname{Tot}(h_X)[d+1]
 \simeq\Cok\bigl(\B(-,d_X^{d,d+1})\bigr)
\end{equation}
in $\D(\A)$. Thus a homotopy
$d$-kernel is equivalently such a resolution whose first differential
is the prescribed morphism.

Suppose first that (1) holds.  Since $\A$ is additive and $\B$ is
idempotent complete, every morphism between finitely generated
projective $\B$-modules is isomorphic to $\B(-,f)$ for some
$f\in Z^0\A$.  Put
\[
M_f=\Cok\B(-,f),\qquad
C_f=\operatorname{Cone}(h_X\xrightarrow{h_f}h_Y).
\]
A homotopy $d$-kernel of $f$ and
\eqref{eq:homotopy-kernel-resolution} give an exact triangle
\begin{equation}\label{eq:two-term-remainder}
C_f\longrightarrow M_f\longrightarrow E_f\dashrightarrow
\end{equation}
with
\[
E_f\in\add(\A)[2]*\add(\A)[3]*\cdots*\add(\A)[d+1].
\]
In particular, $E_f\in\D^{\le-2}(\A)$, and $H^{-2}E_f$ is a finitely
presented $\B$-module.  Indeed, the semi-free filtration yields a
triangle
\[
P_2[2]\longrightarrow E_f\longrightarrow E_f'\dashrightarrow
\]
with $P_2\in\add(\A)$ and, when $d\ge2$,
$E_f'\in\add(\A)[3]*\cdots*\add(\A)[d+1]$; for $d=1$, take
$E_f'=0$.  Hence
$H^0P_2\to H^{-2}E_f$ is surjective and its kernel is the image of
$H^{-3}E_f'$.  The latter module is finitely generated, since it is a
quotient of $H^0P_3$ for some $P_3\in\add(\A)$ when $d\ge2$, and it
is zero when $d=1$.  Thus $H^{-2}E_f$ is finitely presented.

Apply this construction to the zero morphism $0\to A$.  Then
$C_f=h_A$ and $M_f=H^0h_A$, so \eqref{eq:two-term-remainder} becomes
the exact triangle
\begin{equation}\label{eq:negative-cohomology-triangle}
h_A\longrightarrow H^0h_A\longrightarrow E_A\dashrightarrow
\end{equation}
and gives $H^{-1}h_A\cong H^{-2}E_A$.  Thus $H^{-1}h_A$ is finitely
presented for every $A\in\A$.

For a general $f\colon X\to Y$, the long exact sequence of
\eqref{eq:two-term-remainder} gives
$H^{-1}C_f\cong H^{-2}E_f$.  On the other hand, the triangle
\[
h_X\xrightarrow{h_f}h_Y\longrightarrow C_f\dashrightarrow
\]
gives an exact sequence
\[
H^{-1}h_Y\longrightarrow H^{-1}C_f\longrightarrow
\Ker\B(-,f)\longrightarrow0.
\]
It follows that $\Ker\B(-,f)$ is finitely presented.  Hence kernels of
morphisms between finitely generated projective $\B$-modules are
finitely presented, and therefore $\B$ is right coherent.

It remains to show that $H^{-r}h_A$ is finitely presented for every
$A\in\A$ and $r\ge2$.  We argue by induction on $r$, the cases $r=0,1$
having already been established.  Assume that $H^{-s}h_C$ is finitely
presented for all $C\in\A$ and $0\le s<r$.
Choose a finite extension filtration
\[
E_A=E_A^{(2)},\quad E_A^{(d+2)}=0,
\]
together with exact triangles
\[
P_i[i]\longrightarrow E_A^{(i)}\longrightarrow
E_A^{(i+1)}\dashrightarrow
\qquad(2\le i\le d+1),
\]
where $P_i\in\add(\A)$.  For each $i$, we have
\[
H^{-r-1}(P_i[i])=H^{-(r+1-i)}P_i,\quad
H^{-r}(P_i[i])=H^{-(r-i)}P_i.
\]
Each of these modules is either zero, by the connectivity of $\A$, or
a finite direct summand of modules of the form $H^{-s}h_C$ with
$0\le s<r$; hence it is finitely presented by the induction
hypothesis.  Starting with $E_A^{(d+2)}=0$ and proceeding by descending
induction on $i$, the long exact sequence of the above triangle shows
that $H^{-r-1}E_A^{(i)}$ is finitely presented.  More explicitly, its
relevant part is
\[
H^{-r-1}(P_i[i])\longrightarrow H^{-r-1}E_A^{(i)}
\longrightarrow H^{-r-1}E_A^{(i+1)}
\longrightarrow H^{-r}(P_i[i]).
\]
Thus $H^{-r-1}E_A^{(i)}$ is an extension of the image of the first map,
which is a quotient of $H^{-r-1}(P_i[i])$, by the image of the second
map, which is the kernel of the last map.  These two images are
finitely presented because $\mod\B$ is abelian.  In particular,
$H^{-r-1}E_A=H^{-r-1}E_A^{(2)}$ is finitely presented.  The long exact
sequence of
\eqref{eq:negative-cohomology-triangle} gives
\[
H^{-r}h_A\cong H^{-r-1}E_A,
\]
so the induction closes.  Therefore $\A$ is right coherent.

Now let $M\in\mod\B$.  Choose a projective presentation of $M$ and,
using the idempotent completeness of $\B$, write it as
\[
\B(-,X)\xrightarrow{\B(-,f)}\B(-,Y)\longrightarrow M\longrightarrow0
\]
for some $f\in Z^0\A$.  A homotopy $d$-kernel of $f$ gives, by
\eqref{eq:homotopy-kernel-resolution},
\[
M\in\add(\A)*\add(\A)[1]*\cdots*\add(\A)[d+1].
\]
Lemma~\ref{lem:fp-flat-projective} yields $\pd_{\A}M\le d+1$.
The preceding corollary now gives $\wgl\A\le d+1$, proving (2).

Conversely, suppose that (2) holds, and let $f\colon X\to Y$ be a
morphism in $Z^0\A$.  Put
\[
M=\Cok\bigl(\B(-,f)\colon\B(-,X)\to\B(-,Y)\bigr).
\]
Then $M\in\mod\B$, and the preceding corollary gives
$\pd_{\A}M\le d+1$.  In the construction of Lemma \ref{projrescoh}, we may take
$P_0=h_Y$ and the canonical epimorphism $h_Y\to M$.  If
\[
M_1\longrightarrow h_Y\longrightarrow M\dashrightarrow
\]
is the resulting triangle, the morphism $h_X\to h_Y$ induced by $f$
lifts to $h_X\to M_1$, and its map on zeroth cohomology is surjective.
Choose the lift so that its composite with $M_1\to h_Y$ is $h_f$.
Thus we may take $P_1=h_X$ and continue the construction of Lemma \ref{projrescoh}.
We obtain triangles
\[
M_{i+1}\longrightarrow P_i\longrightarrow M_i\dashrightarrow
\qquad(0\le i\le d)
\]
with $M_0=M$, $P_0=h_Y$, and $P_1=h_X$.  Dimension shifting gives
$\fd_{\A}M_{d+1}\le0$: indeed,
Lemma~\ref{lem:fp-flat-projective} first gives
$\fd_{\A}M\le d+1$.  Thus the same lemma gives
$M_{d+1}\in\add(\A)$.

By the idempotent completeness of $\B$, all the objects in
$\add(\A)$ occurring above may be represented by objects of $\A$.
Convolving these triangles then produces a twisted complex $X^\bullet$
concentrated in degrees $0,\ldots,d+1$, with $X^d=X$,
$X^{d+1}=Y$, and $d_X^{d,d+1}=f$.  Its Yoneda totalization shifted by
$d+1$ is isomorphic to $M$.  By
\eqref{eq:homotopy-kernel-resolution}, $X^\bullet$ is left $d$-exact.
Hence it is a homotopy $d$-kernel of $f$, proving (1).
\end{proof}

For the applications to locally finite dg algebras in the next section,
we also recall the required form of Koszul duality.  Following
\cite[Definition~2.19]{Fus}, for a dg algebra $C$ we denote by $\pvd C$
the full subcategory of $\D(C)$ consisting of the objects whose total
cohomology is finite dimensional.

\begin{Def}\label{def:koszul-dual}
Let $A$ be a locally finite connective dg $k$-algebra and put
$S_A=H^0A/\rad H^0A$. Following \cite[Definition~4.1]{Fus}, its
\emph{Koszul dual} is
\[
A^!:=\REnd_A(S_A).
\]
The associated contravariant Koszul duality functor is
\[
\Phi_A:=\RHom_A(-,S_A)\colon
\D(A)\longrightarrow\D((A^!)^{\op})^{\op}.
\]
\end{Def}

In \cite{Fus}, the Koszul duality functor $\Phi_A$ is shown to restrict to triangle equivalences.

\begin{Thm}\label{thm:koszul-duality}\cite[4.4(1)]{Fus}
In the setting of Definition~\ref{def:koszul-dual}, the dg algebra
$A^!$ is locally finite and positive. Moreover,
$\Phi_A(A)\simeq S_{(A^!)^{\op}}$, and $\Phi_A$ induces the following
three contravariant triangle equivalences, where the upper and lower
rows are the restrictions of the middle row:
\[
\begin{tikzcd}[column sep=large,row sep=small]
\per_{\dg}A
  \arrow[rr,"\sim"] \arrow[d,hook]
&& (\pvd_{\dg}(A^!)^{\op})^{\op}
  \arrow[d,hook] \\
\D^-_{\mathrm{fd},\dg}(A)
  \arrow[rr,"\sim"]
&& (\D^+_{\mathrm{fd},\dg}(A^!)^{\op})^{\op} \\
\pvd_{\dg}A
  \arrow[rr,"\sim"] \arrow[u,hook]
&& (\per_{\dg}(A^!)^{\op})^{\op}
  \arrow[u,hook].
\end{tikzcd}
\]
\end{Thm}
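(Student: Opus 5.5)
The natural order is to prove the bottom row first, where everything reduces to the definition of $A^!$. Then I would pass to the middle row by homotopy limits, and finally read off the top row as a restriction.

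\textbf{Properties of $A^!$.} Write $H^iA^!=\Ext^i_A(S_A,S_A)$.
\begin{itemize}
\item \emph{Positivity.} Since $A$ is connective, $S_A$ lies in the heart $\Mod H^0A$ of the standard $t$-structure. Hence $\D(A)(S_A,S_A[i])=0$ for $i<0$. Moreover $H^0A^!=\End_{H^0A}(S_A)$, which is semisimple because $H^0A$ is finite dimensional. So $A^!$ is positive.
\item \emph{Local finiteness.} Local finiteness of $A$ makes $A$ coherent in the sense above, with $H^0A$ finite dimensional. Lemma~\ref{projrescoh} then gives a resolution of $S_A$ by triangles $M_{i+1}\to P_i\to M_i$ with $P_i\in\add A$.
\item \emph{Finiteness of $\Ext^i$.} Only $P_0,\dots,P_{i+1}$ affect $\Ext^i$: the remainder $M_{i+2}[i+2]$ lies in degrees $\le -(i+2)$, so it contributes nothing to $\Ext^i$ or $\Ext^{i+1}$. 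The remaining terms are $\RHom_A(P_j,S_A)=\Hom_{H^0A}(H^0P_j,S_A)$, each finite dimensional and concentrated in degree $0$. So $\Ext^i_A(S_A,S_A)$ is finite dimensional.
\item \emph{The image of $A$.} By Yoneda, $\Phi_A(A)=\RHom_A(A,S_A)\simeq S_A$, viewed as a left $A^!$-module. It is concentrated in degree $0$ and semisimple over $H^0A^!$. After identifying multiplicities (additive/Morita), it is $S_{(A^!)^{\op}}$.
\end{itemize}

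\textbf{Bottom row.} Because $H^0A$ is finite dimensional, truncation triangles and composition series show that $\pvd A=\operatorname{thick}(S_A)$. On the generator, $\Phi_A$ induces the tautological quasi-isomorphism
\[
\REnd_A(S_A)^{\op}\simeq \REnd_{(A^!)^{\op}}(A^!),
\]
and it sends $S_A$ to the free module $A^!$. By dévissage, $\Phi_A$ is therefore fully faithful on $\operatorname{thick}(S_A)$, with essential image $\operatorname{thick}(A^!)=\per(A^!)^{\op}$.

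\textbf{Middle row.} Any $M\in\D^-_{\mathrm{fd}}(A)$ satisfies
\[
M\simeq\operatorname{holim}_n\tau^{\ge -n}M,
\]
by left completeness, and each $\tau^{\ge -n}M$ lies in $\pvd A$. The contravariant functor $\RHom_A(-,S_A)$ converts this into a homotopy colimit. That colimit stabilizes degreewise, since only finitely many cohomologies contribute in each degree; hence it lies in $\D^+_{\mathrm{fd}}((A^!)^{\op})$.

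Fully faithfulness should then follow from three ingredients: the bottom-row case, the Milnor sequence, and Mittag--Leffler. The last holds because all Hom spaces involved are finite dimensional, using local finiteness of both $A$ and $A^!$, the latter shown above.

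For essential surjectivity, I would run the symmetric argument. Because $A^!$ is positive, every object of $\D^+_{\mathrm{fd}}((A^!)^{\op})$ is a homotopy colimit of its truncations $\tau^{\le n}$, and these lie in $\pvd(A^!)^{\op}$.
\begin{itemize}
\item Each truncation is in $\pvd(A^!)^{\op}=\operatorname{thick}(S_{(A^!)^{\op}})=\operatorname{thick}(\Phi_A A)$, so it is in the image of $\Phi_A$ restricted to $\per A$.
\item Taking the corresponding homotopy limit in $\D(A)$ produces a preimage.
\end{itemize}

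\textbf{Top row.} The first step gave $\Phi_A(A)\simeq S_{(A^!)^{\op}}$. Hence $\Phi_A$ carries $\per A=\operatorname{thick}(A)$ onto $\operatorname{thick}(S_{(A^!)^{\op}})=\pvd(A^!)^{\op}$, which gives the top row.

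\textbf{Main obstacle.} I expect the middle row to be the hardest step: exchanging $\Phi_A$ with homotopy limits and controlling $\lim^1$ terms. This is exactly where local finiteness is used, so I would check the Mittag--Leffler condition degreewise first. Implicitly, this step also proves the double Koszul duality $A\simeq A^{!!}$.
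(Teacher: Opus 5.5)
The paper does not prove this statement; it quotes \cite[4.4(1)]{Fus}. Your proposal therefore has to stand on its own. Your treatment of $A^!$ (positivity and local finiteness) and of the bottom row is sound. Your sketch of full faithfulness in the middle row can also be completed, since you correctly note that $\RHom_A(-,S_A)$ turns $M\simeq\operatorname{holim}_n\tau^{\ge-n}M$ into a degreewise stabilizing homotopy colimit.

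\textbf{The gap: essential surjectivity in the middle row.} You claim that every $N\in\D^+_{\mathrm{fd}}((A^!)^{\op})$ is a homotopy colimit of truncations $\tau^{\le n}N$ lying in $\pvd(A^!)^{\op}$. This is false. $A^!$ is positive, not connective, so $\D((A^!)^{\op})$ has no standard $t$-structure supplying such truncations, and in general no such presentation exists. For example, take $A=k[y]/(y^2)$ with $y$ in degree $0$, so that $\Ext^i_A(k,k)\cong k$ for every $i\ge0$. The free module $(A^!)^{\op}$, which is $\Phi_A(S_A)$ by your bottom row, has three properties:
\begin{enumerate}
\item it lies in $\D^+_{\mathrm{fd}}((A^!)^{\op})$;
\item it is compact;
\item it has infinite-dimensional total cohomology.
\end{enumerate}
If it were $\operatorname{hocolim}_nN_n$ with $N_n\in\pvd$, compactness would make its identity factor through some $N_n$. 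It would then be a direct summand of an object of $\pvd$, which is absurd.

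The limit directions are also reversed. $\Phi_A$ turns homotopy colimits into homotopy limits. It turns a homotopy limit into a homotopy colimit only in the special stabilizing situation you used for full faithfulness. So ``take the homotopy limit of the preimages'' is not justified either.

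A related gap: your top row, and the essential surjectivity step, rest on the equality $\operatorname{thick}(S_{(A^!)^{\op}})=\pvd(A^!)^{\op}$, which you only assert. The truncation-and-composition-series d\'evissage you used for $\pvd A$ is exactly what is unavailable on the positive side. The identification $\Phi_A(A)\simeq S_{(A^!)^{\op}}$ has the same problem. It tacitly uses that a module over a positive dg algebra with cohomology in degree $0$ is determined by its $H^0$-module.

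\textbf{How to repair it.} Mirror your argument. On the positive side the natural truncations are \emph{quotients}, e.g.\ $k[x]\to k[x]/(x^{n+1})$. More generally, take a minimal $A_\infty$-model, in which $H^{\ge1}(A^!)^{\op}$ strictly raises degrees; some care is needed if $H^0A^!$ is not separable over $k$. Then the part of $N$ in degrees $>n$ is a submodule. Hence $N\simeq\operatorname{holim}_nN_{\le n}$ with $N_{\le n}\in\pvd(A^!)^{\op}$, and the fibre of $N_{\le n+1}\to N_{\le n}$ lies in $\add S_{(A^!)^{\op}}[-n-1]$. The same filtration gives $\pvd(A^!)^{\op}=\operatorname{thick}(S_{(A^!)^{\op}})$ and justifies $\Phi_A(A)\simeq S_{(A^!)^{\op}}$.

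With this, your top row goes through, using the full faithfulness you already have on $\per A\subseteq\D^-_{\mathrm{fd}}(A)$. It must be proved \emph{before} essential surjectivity in the middle row. Then:
\begin{enumerate}
\item Lift the inverse system $(N_{\le n})$ to a direct system $(Y_n)$ in $\per A$. Its successive cones lie in $\add A[n+1]\subseteq\D^{\le-n-1}(A)$.
\item Put $X=\operatorname{hocolim}_nY_n$. Then $X\in\D^-_{\mathrm{fd}}(A)$ by stabilization.
\item $\Phi_A(X)\simeq\operatorname{holim}_n\Phi_A(Y_n)\simeq N$ holds automatically, because $\RHom_A(-,S_A)$ sends homotopy colimits to homotopy limits.
\end{enumerate}
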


We next record how this duality interacts with passage to the opposite
algebra and with the object obtained by dualizing $DS_A$.

\begin{Lem}\label{lem:koszul-duality}
Put $B=A^!$. Then
\[
\add DS_A=\add S_{A^{\op}},
\qquad
(A^{\op})^!\simeq B^{\op}.
\]
Assume that
$DS_A\in\per A^{\op}$ and put
\[
U_A:=\RHom_{A^{\op}}(DS_A,A)\in\per A.
\]
Then
\[
\Phi_A(U_A)\cong DB
\quad\text{in }\D(B^{\op}),
\]
where $DB$ is regarded as a right $B^{\op}$-module via the right
regular action of $B$.
\end{Lem}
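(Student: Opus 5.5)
The plan is to prove the three assertions in order. All three are essentially formal consequences of $k$-duality $D=\Hom_k(-,k)$ on finite-dimensional modules and of the standard isomorphisms for perfect modules.

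\emph{First assertion.} Since $A$ is locally finite, $\Lambda:=H^0A$ is a finite-dimensional algebra and $S_A=\Lambda/\rad\Lambda$. Under $D$, the simple right $\Lambda$-modules correspond bijectively to the simple left $\Lambda$-modules, i.e.\ to the simple right $\Lambda^{\op}=H^0(A^{\op})$-modules. Each simple right module occurs as a summand of $S_A$, so $DS_A$ is a semisimple $H^0(A^{\op})$-module containing every simple module as a summand. Regarded in the heart of $\D(A^{\op})$, this gives $\add DS_A=\add S_{A^{\op}}$. In fact I will use the stronger statement $DS_A\cong S_{A^{\op}}$ as $A^{\op}$-modules. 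This holds because the semisimple algebra $\bar\Lambda=\Lambda/\rad\Lambda$ is symmetric: each simple factor is a matrix algebra over a division algebra $E$, and one obtains a symmetric form by composing the reduced trace $E\to Z(E)$ with a nonzero $k$-linear functional $Z(E)\to k$. Hence $D\bar\Lambda\cong\bar\Lambda$ as bimodules.

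\emph{Second assertion.} The functor $D$ restricts to a contravariant dg equivalence between $\pvd_{\dg}A$ and $\pvd_{\dg}A^{\op}$, and it sends $S_A$ to $DS_A$. Therefore
\[
(A^{\op})^!=\REnd_{A^{\op}}(S_{A^{\op}})\simeq\REnd_{A^{\op}}(DS_A)\simeq\REnd_A(S_A)^{\op}=B^{\op}.
\]
To make this an honest quasi-isomorphism of dg algebras, I will fix a cofibrant (semi-free) model $\widetilde S\to S_A$ and take $B=\End_A(\widetilde S)$. I will then use $D\widetilde S$, which is a fibrant model of $DS_A$, to compute $\REnd_{A^{\op}}(DS_A)$. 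Transposition $f\mapsto Df$ then gives a strict isomorphism onto $B^{\op}$.

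\emph{Third assertion.} Since $DS_A\in\per A^{\op}$, the standard evaluation isomorphism for perfect modules gives
\[
\RHom_A\bigl(\RHom_{A^{\op}}(P,A),N\bigr)\simeq N\otimes^{\mathbb L}_A P
\]
for $P\in\per A^{\op}$ and $N\in\D(A)$. Indeed, both sides are exact in $P$ and agree for $P=A$. Applying this with $P=DS_A$ and $N=S_A$ yields
\[
\Phi_A(U_A)\simeq S_A\otimes^{\mathbb L}_A DS_A.
\]
Next, tensor–hom adjunction gives $D(M\otimes^{\mathbb L}_A N)\simeq\RHom_A(M,DN)$. Because $S_A$ is finite-dimensional, $DDS_A\simeq S_A$, and so
\[
D\bigl(S_A\otimes^{\mathbb L}_A DS_A\bigr)\simeq\RHom_A(S_A,S_A)=B.
\]
Dualizing once more, using that $B$ is locally finite by Theorem~\ref{thm:koszul-duality}, gives $S_A\otimes^{\mathbb L}_A DS_A\simeq DB$.

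\emph{Main obstacle.} I expect the hardest part to be the module-structure bookkeeping in this last step: checking that the $B^{\op}$-action on $\Phi_A(U_A)$, which comes from $B$ acting on $S_A$ in the second variable of $\RHom_A(-,S_A)$, matches the action on $DB$ induced by right multiplication of $B$. I will handle this by working throughout with the cofibrant model $\widetilde S$ and $B=\End_A(\widetilde S)$. With this choice every isomorphism above is a natural map of dg $B$-modules, the action being induced by postcomposition on the $\widetilde S$ factor, and each one is a quasi-isomorphism by the argument just given.
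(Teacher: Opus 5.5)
Your proposal is correct. It differs from the paper mainly in the third assertion; for the first two, the paper simply cites \cite[Remark~4.2]{Fus}, whereas you prove them directly. The paper keeps $D$ on the outside from the start:
\[
\Phi_A(U_A)\cong\RHom_A(U_A,DDS_A)\cong D\bigl(U_A\otimes^{\mathbb L}_A DS_A\bigr)\cong D\REnd_{A^{\op}}(DS_A)\cong DB .
\]
Here perfectness of $DS_A$ is used in the form $\RHom_{A^{\op}}(P,A)\otimes^{\mathbb L}_A P\simeq\REnd_{A^{\op}}(P)$. The last step uses the duality $\REnd_{A^{\op}}(DS_A)\simeq\REnd_A(S_A)^{\op}$, which is the content of the second assertion.

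You use perfectness in the other form, $\RHom_A(\RHom_{A^{\op}}(P,A),N)\simeq N\otimes^{\mathbb L}_A P$. You then compute $D(S_A\otimes^{\mathbb L}_A DS_A)\simeq\REnd_A(S_A)=B$ directly and undo $D$. This buys two things:
\begin{itemize}
\item the third assertion no longer depends on the second;
\item the $B$-action is transparent, since it only ever lives on the single copy of $\widetilde S$, and the action on $DB$ induced by right multiplication falls out of the adjunction.
\end{itemize}
The price is an extra biduality step for an a priori unknown complex. This needs the local finiteness of $B$ from Theorem~\ref{thm:koszul-duality}. The paper's chain needs no biduality beyond $S_A\cong DDS_A$ and the duality on $\pvd$.

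One step in your second assertion should be corrected. Transposition $f\mapsto Df$, from $\End_A(\widetilde S)^{\op}$ to $\End_{A^{\op}}(D\widetilde S)$, is an injective morphism of dg algebras. It is not a strict isomorphism in general, because $\widetilde S$ need not be finite-dimensional in each degree; only the cohomology of $A$ is assumed finite. It is, however, a quasi-isomorphism, for two reasons:
\begin{itemize}
\item $D\widetilde S$ is K-injective, since $\widetilde S$ is K-flat, so the target computes $\REnd_{A^{\op}}(DS_A)$;
\item on cohomology the map is the one induced on $\Ext^*_A(S_A,S_A)$ by the duality $D\colon\pvd A\to(\pvd A^{\op})^{\op}$, which is bijective.
\end{itemize}
That is all that $(A^{\op})^!\simeq B^{\op}$ requires. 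Your strengthening $DS_A\cong S_{A^{\op}}$ is what makes this a quasi-isomorphism rather than only a Morita equivalence. Your symmetric-algebra argument for it is fine; comparing multiplicities block by block in $H^0A/\rad H^0A$ would also suffice.
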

\begin{proof}
The first two isomorphisms are \cite[Remark~4.2]{Fus}. Applying tensor--Hom adjunction to
$A^{\op}$ gives
\[
\begin{split}
\Phi_A(U_A)
&=\RHom_A\bigl(\RHom_{A^{\op}}(DS_A,A),S_A\bigr)\\
&\cong D\bigl(\RHom_{A^{\op}}(DS_A,A)\otimes_{A}^{\mathbb L}DS_A\bigr)\\
&\cong D\REnd_{A^{\op}}(DS_A)
 \cong DB.\qedhere
\end{split}
\]
\end{proof}

The final preliminary result reads the global dimension of $A$ from
the cohomological amplitude of its Koszul dual.

\begin{Lem}\label{lem:global-dimension-koszul-dual}
Let $A$ be a locally finite connective dg algebra and assume that
$S_A\in\per A$. Then
\[
\gl A=\sup\{i\mid H^i(A^!)\neq0\}.
\]
\end{Lem}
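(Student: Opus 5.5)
The plan is to compute $H^i(A^!)=\Ext^i_A(S_A,S_A)$ and compare it with $\pd_A S_A$, reducing $\gl A$ to $\pd_A S_A$.

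First I reduce $\gl A$ to a statement about $S_A$. The assumption $S_A\in\per A$ implies $S_A\in\D^b_{\fp}$ with finitely many cohomologies. Since $A$ is locally finite and connective, $H^0A$ is finite dimensional and $H^{-n}A$ is finitely generated over it, so $A$ is coherent. Every $M\in\mod H^0A$ has a finite composition series with semisimple factors in $\add S_A$. Hence $\pd_A M\le \pd_A S_A$, and therefore $\gl A=\pd_A S_A$. More precisely, I will use Lemma \ref{lem:fp-flat-projective} together with the Corollary after it: the global dimension is determined by the finitely presented modules. For arbitrary $N\in\Mod H^0A$ I argue as follows. Since $S_A\in\per A$, the functor $\RHom_A(S_A,-)$ commutes with direct sums and filtered colimits. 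Every $H^0A$-module has a finite radical filtration, of length at most the Loewy length, whose layers are semisimple, hence direct sums of simples. It follows that $\pd_A S_A\le n$ is equivalent to $\RHom_A(S_A,S_A)\in\D^{\le n}(k)$. This uses $\Hom_{\D(A)}(S_A,\bigoplus S_A[i])=\bigoplus\Hom(S_A,S_A[i])$, which holds by compactness.

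Next I show that $\sup\{i\mid H^i\RHom_A(S_A,S_A)\ne0\}$ equals $\pd_A S_A$ exactly, not merely bounds it. The inequality $\le$ is immediate from the previous step. For the reverse inequality, let $n=\pd_A S_A$, which is finite because $S_A\in\per A$: a perfect object in $\D^{\le0}_{\fp}$ with finite $\pd$ lies in $\add A*\cdots*\add A[n]$ by Lemma \ref{lem:fp-flat-projective}. I take a minimal such semi-free resolution, chosen using radical covers as in Lemma \ref{projrescoh}, with projective covers over the finite-dimensional algebra $H^0A$. Minimality means the top term $P_n[n]$ is nonzero and its components map into the radical. Applying $\RHom_A(-,S_A)$, the top piece contributes $\Hom(P_n,S_A)\ne0$ in degree $n$, and the differential into it vanishes by minimality. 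Hence $\Ext^n_A(S_A,S_A)\ne0$.

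The main obstacle is this nonvanishing in top degree, namely the minimality argument over a dg algebra. Here I would rely on the fact that in the triangle $M_n\to P_{n-1}\to M_{n-1}$ the map $H^0P_{n-1}\to H^0M_{n-1}$ is a projective cover, so that $\Hom(M_n,S_A)\to\Hom(P_{n-1},S_A)$ vanishes. A nonzero $M_n\in\add A$ then gives nonzero $\Ext^n(S_A,S_A)\cong\Hom(M_n,S_A)$ by dimension shifting. The edge case where $S_A$ is projective, i.e. $n=0$, is handled directly, since $H^0\End(S_A)\ne0$ whenever $A\ne0$.
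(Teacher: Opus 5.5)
Your proposal is correct and follows essentially the paper's route: identify $\gl A$ with $\pd_A S_A$ using that $H^0A$ is finite dimensional, then use a minimal (projective-cover) resolution of $S_A$, which by Lemma~\ref{lem:fp-flat-projective} terminates exactly at $n=\pd_A S_A$, to get $\Ext^n_A(S_A,S_A)\neq0$. The paper phrases the last step as the vanishing of the differential on $\Hom_A(P,S_A)$, whereas your dimension shifting through the triangles $M_{i+1}\to P_i\to M_i$ proves only the top-degree nonvanishing, which is all that is needed. Two small fixes: the map that vanishes by the projective-cover property is the restriction $\Hom(P_{n-1},S_A)\to\Hom(M_n,S_A)$, not the reverse, and $M_n\neq0$ needs the remark that $M_n=0$ would force $\pd_A S_A\le n-1$.
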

\begin{proof}
Since $H^0A$ is finite dimensional and $S_A\in\per A$, there is a
minimal finite semi-free resolution $P\to S_A$. Minimality implies
that the differential of $\Hom_A(P,S_A)$ is zero. Thus $P$ has a
generator in resolution degree $i$ precisely when
\[
H^i(A^!)=\Ext_A^i(S_A,S_A)\neq0.
\]
By Lemma~\ref{lem:fp-flat-projective}, the largest such $i$ is
$\pd_A S_A$. Finally, $S_A$ contains all simple $H^0A$-modules and
$\mod H^0A$ is a length category, so $\pd_A S_A=\gl A$.
\end{proof}

Proposition~\ref{prop:homotopy-kernels} and the preceding Koszul-dual
descriptions provide the two inputs for the Auslander correspondence in
the next section.

\section{Auslander correspondence for higher stable dg categories}\label{sec:Aus-corr}

In this section, we establish Auslander correspondence for higher stable dg categories (Theorems \ref{thm:auslander-correspondence}, \ref{thm:aus-cor-for-loc-fin}). As for the classical Auslander correspondence, our result interprets the categorical property of higher stability in terms of its representation-theoretic property.

First, we recall the definition of higher stable dg categories from \cite{MN26}. The notion of stable dg categories was introduced by Chen \cite[6.1]{Che26}, and a higher-dimensional generalization was introduced by \cite{MN26}.

\begin{Def}\label{def:d-stable}\cite[4.12]{MN26}
Let $d\ge1$. An additive
connective dg category $\M$ is called \emph{$d$-stable} if the following
conditions hold.
\begin{enumerate}
\item Every morphism in $Z^0\M$ has a homotopy $d$-kernel and a
homotopy $d$-cokernel.
\item A twisted complex $X^\bullet\in\operatorname{tw}(\M)$ concentrated
in degrees $0,\ldots,d+1$ is left $d$-exact if and only if it is right
$d$-exact.
\end{enumerate}
\end{Def}

The basic examples come from cluster tilting theory: a $d$-cluster
tilting subcategory of a stable dg category inherits $d$-stability
from its ambient category.

\begin{Prop}[{\cite[7.23]{MN26}}]\label{prop:cluster-tilting-d-stable}
Let $\A$ be a stable dg category and let $\M\subseteq\A$ be a $d$-cluster tilting subcategory. Then $\M$ is $d$-stable.
\end{Prop}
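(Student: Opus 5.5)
The plan is to work inside the ambient stable dg category $\A$, using that $\A$ is pretriangulated. First, $\operatorname{tw}(\M)$ embeds quasi-fully faithfully into $\operatorname{tw}(\A)\simeq\A$, and a twisted complex $X^\bullet$ in degrees $0,\ldots,d+1$ has a totalization $T(X^\bullet)\in\A$. This totalization is an iterated extension, i.e.
\[
T(X^\bullet)\in X^{d+1}[-d-1]*\cdots*X^1[-1]*X^0 .
\]
The dg Yoneda lemma gives
\[
\operatorname{htw}(\M)(M[-i],X^\bullet)\cong H^0\A(M[-i],T(X^\bullet))
\qquad\text{for } M\in\M .
\]
Left $d$-exactness thus becomes a vanishing condition $H^0\A(\M,T[j])=0$ for a range of $j$, and right $d$-exactness becomes the dual condition $H^0\A(T[j],\M)=0$. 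I use the cluster tilting hypotheses throughout: $\M\subseteq\tau^{\le0}\A$, $H^0\M$ is functorially finite in $H^0\A$, and
\[
\M=\{X\mid H^0\A(\M,X[i])=0,\ 0<i<d\}=\{X\mid H^0\A(X,\M[i])=0,\ 0<i<d\}.
\]

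\emph{Existence of homotopy $d$-kernels.} Let $f\colon X^d\to X^{d+1}$ be in $Z^0\M$, and set $C_d=\operatorname{cocone}(f)$ in $\A$. I build an Iyama--Yoshino tower. Take a right $\M$-approximation $X^{d-1}\to C_d$ and let $C_{d-1}$ be its cocone; then iterate. The approximation property together with $\operatorname{Ext}^i(\M,\M)=0$ for $0<i<d$ gives $H^0\A(\M,C_j[i])=0$ for an increasing range of $i$. After $d-1$ steps the last cocone $C_1$ satisfies the vanishing for $0<i<d$, so $C_1\in\M$; put $X^0:=C_1$. Lifting this tower of triangles to chain level (pretriangulatedness lets me choose the homotopies) produces a twisted complex with $d_X^{d,d+1}=f$. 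The Yoneda formula, together with surjectivity of $H^0\A(\M,-)$ on each approximation, shows that it is left $d$-exact. Homotopy $d$-cokernels are obtained dually using left approximations.

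\emph{Left versus right $d$-exactness.} This is the step I expect to be the main obstacle. The plan is to show that both conditions are equivalent to one symmetric condition, namely that $X^\bullet$ is quasi-isomorphic in $\operatorname{tw}(\M)$ to a complex arising from a tower of triangles as above (all intermediate cones $C_j$ existing in $\A$ with the approximation properties). Concretely, I first prove that left $d$-exactness of $X^\bullet$ forces the partial totalizations $T_{\ge j}$ (the convolution of $X^j\to\cdots\to X^{d+1}$) to fit into triangles
\[
X^{j}\to T_{\ge j+1}[\,\cdot\,]\to T_{\ge j}\dashrightarrow
\]
in which $X^j\to\operatorname{cocone}$ is an $\M$-approximation. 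This should follow by descending induction on $j$, reading off the vanishing of $H^0\A(\M,T[i])$ degree by degree. Once this tower description is in hand, I apply $H^0\A(-,\M)$ to the same tower. The relevant $\operatorname{Ext}^i(C_j,\M)$ vanish by the second description of $\M$ and by the fact that the cones of approximations have no negative-degree maps to $\M$ (connectivity of $\A$ restricted to $\M$ and $\M\subseteq\tau^{\le0}\A$). This yields right $d$-exactness, and the converse is dual. The delicate points are the bookkeeping of the exact range of degrees, and ensuring that the last cone genuinely lands in $\M$ rather than merely in $\M*\M[1]$. For the latter I would use both vanishing descriptions of $\M$ simultaneously.
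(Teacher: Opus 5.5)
\textbf{There is a genuine gap at the foundation, and the rest of your argument depends on it.}

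A stable dg category is connective. The pretriangulated category in which your totalizations and towers live is an ambient $\widetilde{\A}$ with $\A\simeq\tau^{\le0}\widetilde{\A}$, and, as you note yourself, $\M\subseteq\tau^{\le0}\widetilde{\A}$. So the morphism complexes of $\M$ are the truncations $\tau^{\le0}\widetilde{\A}(M,N)$. By $d$-rigidity these differ from $\widetilde{\A}(M,N)$ by a piece with cohomology $H^{\ge d}\widetilde{\A}(M,N)$, which is in general nonzero (e.g.\ $H^d\widetilde{\A}(M,N)\cong DH^0\widetilde{\A}(N,M)$ in the $d$-Calabi--Yau case). Hence $\operatorname{tw}(\M)\to\operatorname{tw}(\widetilde{\A})\simeq\widetilde{\A}$ is not quasi-fully faithful, and your formula $\operatorname{htw}(\M)(M[-i],X^\bullet)\cong H^0\widetilde{\A}(M[-i],T(X^\bullet))$ fails:
\begin{itemize}
\item For $X^\bullet=N\in\M$ in degree $0$ and $i=d$, the left side is $H^d\M(M,N)=0$, while the right side is $H^d\widetilde{\A}(M,N)$.
\item For the complex you build from the Iyama--Yoshino tower, the spliced triangles make the iterated cone vanish, so $T(X^\bullet)\simeq0$. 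Your formula would then give $\operatorname{htw}(\M)(M[-d-1],X^\bullet)=0$, whereas this group is $\Cok\bigl(H^0\M(M,f)\bigr)$.
\end{itemize}
So neither your ``Yoneda plus surjectivity'' check of left $d$-exactness nor your ``read off $H^0\widetilde{\A}(\M,T[i])$ degree by degree'' induction is justified. The auxiliary claim in your right-exactness step, that the cocones $C_j$ admit no negative-degree maps to $\M$, is also false. Adding to an approximation a summand $N\in\M$ mapping by zero adds $N$ to the cocone, and $H^{<0}\M(N,M)$ need not vanish.

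\textbf{The missing ingredient is an honest comparison of the two totalizations.} Fix $A\in\M$.
\begin{itemize}
\item The complex computing $\operatorname{htw}(\M)(A[-i],X^\bullet)$ is a subcomplex of the one computing $H^i\widetilde{\A}(A,T)$. This holds because all $d_X^{i,j}$ have degree $\le0$ and are cycles in degree $0$.
\item The quotient is the totalization of the $\tau^{\ge1}\widetilde{\A}(A,X^j)[-j]$, which is concentrated in degrees $\ge d$ by $d$-rigidity. In the contravariant version the quotient lives in degrees $\ge-1$.
\item Hence left $d$-exactness gives $H^0\widetilde{\A}(\M,T[i])=0$ for all $i\le d-1$. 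The first description of cluster tilting then gives $T\in\M$, and the case $i=0$ gives $T\simeq0$.
\item Conversely, $T\simeq0$ gives left $d$-exactness directly from the long exact sequence.
\item Applying the second description to $T[d+1]$ shows in the same way that right $d$-exactness is equivalent to $T\simeq0$.
\end{itemize}
So your instinct that both conditions become vanishing conditions on $T$ is right, but only after this correction. Once it is made, the vanishing range forces $T\simeq0$, so the left/right comparison needs no tower reconstruction at all. The same computation also shows that the spliced tower is both left and right $d$-exact.

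Your existence construction otherwise coincides with the paper's. The paper imports two-sided exactness of the spliced complex $Y_f$ from \cite[7.3, 7.14]{MN26}. It handles left versus right by uniqueness of homotopy $d$-kernels \cite[Corollary~3.13 and Remark~3.14]{MN26}, comparing an arbitrary left $d$-exact complex with $Y_f$.
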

\begin{proof}
The proof of \cite[7.23]{MN26} passes through $d$-exact dg
categories.  We instead verify the two conditions in
Definition~\ref{def:d-stable} directly for the convenience of the reader.

We may assume $d\ge2$. Let $f\colon M^d\to M^{d+1}$ be a morphism in $Z^0\M$.  Since $\A$ is
stable, $f$ is a deflation and hence occurs in a conflation
\[
K^{d-1}\longrightarrow M^d\xrightarrow{f}M^{d+1}.
\]
For $j=d-1,\ldots,1$, choose a right $H^0\M$-approximation
$M^j\to K^j$.  It is again a deflation, so it occurs in a conflation
\[
K^{j-1}\longrightarrow M^j\longrightarrow K^j,
\qquad M^j\in\M.
\]
For $M\in\M$ and $1\le i\le d-1$,
these conflations give
\[
H^i\A(M,K^0)\cong\cdots\cong
H^1\A(M,K^{i-1})=0.
\]
Here the last vanishing follows from the approximation property and
$H^1\M=0$.  Since $\M\subseteq\A$ is a $d$-cluster tilting subcategory, we have $K^0\in\M$. Put $M^0=K^0$.

Splicing these conflations yields a twisted complex
$Y_f^\bullet\in\operatorname{tw}(\M)$ with terms
$M^0,\ldots,M^{d+1}$ and $d_{Y_f}^{d,d+1}=f$ (see \cite[7.3]{MN26}).  By
\cite[7.3, 7.14]{MN26}, it is both left and right
$d$-exact.  In particular, it is a homotopy $d$-kernel of $f$.
The dual construction gives a homotopy $d$-cokernel of every morphism
in $Z^0\M$, proving condition~(1) of
Definition~\ref{def:d-stable}.

Finally, let $X^\bullet\in\operatorname{tw}(\M)$ be left $d$-exact and
put $f=d_X^{d,d+1}$.  Both $X^\bullet$ and $Y_f^\bullet$ are homotopy
$d$-kernels of $f$, so they are isomorphic in the homotopy category of
twisted complexes by
\cite[Corollary~3.13 and Remark~3.14]{MN26}.  Hence $X^\bullet$ is
right $d$-exact.  The dual argument proves the converse, and therefore
condition~(2) holds.
\end{proof}

We now establish an Auslander correspondence for $d$-stable dg categories, which is our main theorem in this section.

\begin{Thm}[Auslander correspondence for $d$-stable dg categories]\label{thm:auslander-correspondence}
Let $\M$ be an additive connective dg category. Let $d\ge1$ be a positive integer. Then $\M$ is $d$-stable if and only if the following conditions are satisfied.
\begin{enumerate}
\item $\M$ is coherent
\item $\wgl\M\le d+1$
\item The duality
\[\RHom_\M(-,\M)\colon\per\M\xrightarrow[\simeq]{}\per\M^{\op}\]
restricts to a duality
\[\mod H^0\M\xrightarrow[\simeq]{}\mod H^0\M^{\op}[-d-1].\]
\end{enumerate}
\end{Thm}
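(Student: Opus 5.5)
The plan is to reduce both directions to Proposition~\ref{prop:homotopy-kernels} and its dual, and then to translate the exactness-comparison axiom (2) of Definition~\ref{def:d-stable} into the duality condition (3).

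\emph{Existence of homotopy $d$-(co)kernels.} Applying Proposition~\ref{prop:homotopy-kernels} to $\M$ and to $\M^{\op}$, existence of homotopy $d$-kernels is equivalent to right coherence together with $\wgl\M\le d+1$. Existence of homotopy $d$-cokernels is equivalent to left coherence together with $\wgl\M^{\op}\le d+1$. Since weak global dimension is left--right symmetric, condition (1) of Definition~\ref{def:d-stable} is equivalent to conditions (1) and (2) of the theorem. It therefore remains to show that, under (1) and (2), the exactness comparison is equivalent to (3).

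\emph{Translating exactness into modules.} Let $X^\bullet$ be a twisted complex concentrated in degrees $0,\ldots,d+1$ and put $T=\operatorname{Tot}(h_X)\in\per\M$. By \eqref{eq:homotopy-kernel-resolution}, $X^\bullet$ is left $d$-exact if and only if $T[d+1]\simeq M$ for some $M\in\mod H^0\M$, namely $M=\Cok\B(-,d_X^{d,d+1})$. Dually, $\RHom_\M(T,\M)$ is the Yoneda totalization of $X^\bullet$ viewed in $\operatorname{tw}(\M^{\op})$, with degrees reversed. So $X^\bullet$ is right $d$-exact if and only if $\RHom_\M(T,\M)$ is concentrated in a single degree with finitely presented cohomology; after bookkeeping the shift, $\RHom_\M(T,\M)\in\mod H^0\M^{\op}$ up to the appropriate shift. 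I expect the shift to work out so that $\RHom_\M(M,\M)\in\mod H^0\M^{\op}[-d-1]$.

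\emph{Proving the equivalence.} Assume (3). If $X^\bullet$ is left $d$-exact, then $T[d+1]\in\mod H^0\M$, so its dual lies in $\mod H^0\M^{\op}[-d-1]$, hence $X^\bullet$ is right $d$-exact. The converse uses the inverse duality. Conversely, assume $d$-stability. Every $M\in\mod H^0\M$ has $\pd M\le d+1$ by the corollary after Lemma~\ref{lem:fp-flat-projective}. The proof of Proposition~\ref{prop:homotopy-kernels} then realizes $M$ as $T[d+1]$ for a left $d$-exact complex, which is the homotopy $d$-kernel of a presentation of $M$. By axiom (2) this complex is right $d$-exact, so $\RHom_\M(M,\M)\in\mod H^0\M^{\op}[-d-1]$. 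Symmetrically, $\RHom_{\M^{\op}}(-,\M)$ sends $\mod H^0\M^{\op}[-d-1]$ into $\mod H^0\M$. Since $\RHom_\M(-,\M)$ is already a duality on $\per\M$, and both module categories sit inside $\per$ by Lemma~\ref{lem:fp-flat-projective}, the two restrictions are mutually inverse, which gives (3).

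\emph{Main obstacle.} The step I expect to be delicate is the precise identification of right $d$-exactness with the condition on the dual module. This involves two points. First, one must check that the dual totalization satisfies $\operatorname{htw}(\M)(X^\bullet,A[i])$-vanishing in exactly the degrees $i\le d$, so that the degree ranges line up with the shift $[-d-1]$. Second, one must check that the surviving cohomology is the cokernel of $\B(d_X^{0,1},-)$, and hence finitely presented, which uses left coherence. I would verify both points by the $\M^{\op}$-version of the Yoneda computation in the proof of Proposition~\ref{prop:homotopy-kernels}, reading the complex $X^\bullet$ in the opposite direction.
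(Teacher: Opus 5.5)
Your proposal is correct and follows essentially the paper's own proof. Proposition~\ref{prop:homotopy-kernels}, applied to $\M$ and $\M^{\op}$ together with $\wgl\M=\wgl\M^{\op}$, handles the existence of homotopy $d$-(co)kernels. The dg Yoneda identifications then turn the exactness comparison into the restricted duality, with exactly the shift you expect: $X^\bullet$ is left $d$-exact iff $\operatorname{Tot}(h_X)[d+1]\in\mod H^0\M$, and right $d$-exact iff $\RHom_\M(\operatorname{Tot}(h_X),\M)\in\mod H^0\M^{\op}$, i.e.\ iff the dual of $\operatorname{Tot}(h_X)[d+1]$ lies in $\mod H^0\M^{\op}[-d-1]$. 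One minor inaccuracy in your last paragraph: $\Cok H^0\M(d_X^{0,1},-)$ is finitely presented automatically, being a cokernel of a map between finitely generated projectives, so left coherence is not needed there.
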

\begin{proof}
Put $\B=H^0\M$ and let
\[
(-)^*=\RHom_{\M}(-,\M)\colon
\per\M\longrightarrow\per\M^{\op}
\]
be the standard duality; we use the same notation for its
quasi-inverse.

We first relate the two exactness conditions in
Definition~\ref{def:homotopy-d-kernel} to modules.  Let $X^\bullet$ be
a twisted complex concentrated in degrees $0,\ldots,d+1$, and put
\[
Q_X=\operatorname{Tot}(h_X),\qquad T_X=Q_X[d+1].
\]
By \eqref{eq:homotopy-kernel-resolution},
\begin{equation}\label{eq:left-exact-module}
X^\bullet\text{ is left $d$-exact}
\quad\Longleftrightarrow\quad
T_X\in\mod\B.
\end{equation}
On the other hand, for $A\in\M$ and $j\in\mathbb Z$, dg Yoneda gives
\begin{equation}\label{eq:dual-totalization-cohomology}
H^j(Q_X^*)(A)
 \cong \operatorname{htw}(\M)(X^\bullet,A[j]).
\end{equation}
Therefore \eqref{eq:dual-totalization-cohomology} and our convention for right
$d$-exactness imply
\[
X^\bullet\text{ is right $d$-exact}
\quad\Longleftrightarrow\quad
Q_X^*\in\mod\B^{\op}.
\]
As $(Q_X[d+1])^*\simeq Q_X^*[-d-1]$, we have obtained
\begin{equation}\label{eq:right-exact-module}
X^\bullet\text{ is right $d$-exact}
\quad\Longleftrightarrow\quad
T_X^*\in\mod\B^{\op}[-d-1].
\end{equation}

We shall use one further consequence of Lemmas~\ref{projrescoh} and
\ref{lem:fp-flat-projective}.  If an additive connective dg category
$\A$ is right coherent and $\wgl\A\le d+1$, then every
$F\in\mod H^0\A$ is isomorphic to $T_X$ for some $X^\bullet$
concentrated in degrees $0,\ldots,d+1$.  Indeed,
$\fd_{\A}F\le d+1$, so Lemma~\ref{lem:fp-flat-projective}, applied to
the resolution in Lemma~\ref{projrescoh}, gives
$F\in\add(\A)*\cdots*\add(\A)[d+1]$.  By our convention, the terms
in $\add(\A)$ are represented by objects of $\A$, and the Yoneda
correspondence for one-sided twisted complexes \cite[2.3]{MN26}
therefore gives $F\simeq T_X$.  Equation~\eqref{eq:left-exact-module}
then shows that $X^\bullet$ is left $d$-exact.

Suppose first that $\M$ is $d$-stable.  Applying
Proposition~\ref{prop:homotopy-kernels} to $\M$ and $\M^{\op}$ (a
homotopy $d$-cokernel over $\M$ becomes a homotopy $d$-kernel after
reversing the indices) shows that $\M$ is coherent and
$\wgl\M\le d+1$.

Let $F\in\mod\B$.  By the preceding realization, there is a left
$d$-exact twisted complex $X^\bullet$ such that $F\simeq T_X$.
Since $\M$ is $d$-stable, $X^\bullet$ is right $d$-exact, and
\eqref{eq:right-exact-module} gives
\[
F^*\in\mod\B^{\op}[-d-1].
\]
Applying the same argument to $\M^{\op}$ shows that the quasi-inverse
duality sends $G\in\mod\B^{\op}$ to
$G^*\in\mod\B[-d-1]$, and therefore sends $G[-d-1]$ to
$G^*[d+1]\in\mod\B$.  Hence the standard duality restricts to the
duality in condition~(3).

Conversely, suppose that conditions~(1)--(3) hold.  By
Proposition~\ref{prop:homotopy-kernels}, applied to $\M$ and
$\M^{\op}$ and using $\wgl\M^{\op}=\wgl\M$, every morphism in
$Z^0\M$ has a homotopy $d$-kernel and a homotopy $d$-cokernel.

Let $X^\bullet$ be concentrated in degrees $0,\ldots,d+1$.  By
condition~(3) and the full faithfulness of the ambient duality,
\[
T_X\in\mod\B
\quad\Longleftrightarrow\quad
T_X^*\in\mod\B^{\op}[-d-1].
\]
Together with \eqref{eq:left-exact-module} and
\eqref{eq:right-exact-module}, this says precisely that left and right
$d$-exactness coincide. Thus $\M$ is $d$-stable.
\end{proof}

By the condition (3), we can show that the equality in the condition (2) holds unless $H^0\M=0$.

\begin{Cor}
Let $\M$ be a $d$-stable dg category with $H^0\M\ne0$. Then we have
\[\wgl\M=d+1.\]
\end{Cor}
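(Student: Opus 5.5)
We argue by contradiction: assume $\wgl\M\le d$, and deduce $H^0\M=0$. Theorem~\ref{thm:auslander-correspondence} already gives $\wgl\M\le d+1$, so only the lower bound needs proof. The idea is that condition~(3) forces nonzero $\Ext^{d+1}$ into $\M$, which a weak global dimension of at most $d$ forbids.

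Take any nonzero $F\in\mod H^0\M$. Such an $F$ exists: for instance $F=H^0h_A$ with $H^0h_A\ne0$ for some $A\in\M$, which is possible because $H^0\M\ne0$. By coherence and Lemma~\ref{projrescoh}, $F\in\D^{\le0}_{\fp}(\M)$. The Corollary after Lemma~\ref{lem:fp-flat-projective} then gives $\pd_\M F\le d$. Lemma~\ref{lem:fp-flat-projective}(3) therefore places $F$ in $\add(\M)*\add(\M)[1]*\cdots*\add(\M)[d]$, so $F$ is a finite semi-free module with generators in degrees $-d,\ldots,0$. In particular $F\in\per\M$.

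Applying $(-)^*=\RHom_\M(-,\M)$ turns the filtration into one with pieces in $\add(\M^{\op})[-i]$ for $0\le i\le d$. Since $\M^{\op}$ is connective, this gives $F^*\in\D^{\le d}(\M^{\op})$. Condition~(3), however, says $F^*\cong G[-d-1]$ for some $G\in\mod H^0\M^{\op}$, and $G[-d-1]$ is concentrated in degree $d+1$. The two constraints together force $F^*=0$.

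Because $(-)^*$ is a duality on $\per\M$, $F^*=0$ gives $F=0$, contradicting the choice of $F$. Hence $\wgl\M=d+1$.

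The only delicate step is the degree bookkeeping: we must confirm that dualizing a module with generators in cohomological degrees $-d,\ldots,0$ yields cohomology only in degrees $\le d$. This holds because $\RHom_\M(h_A[i],\M)=h^A[-i]$ is connective shifted to the right by $i\le d$, and extensions preserve the bound $\D^{\le d}$.
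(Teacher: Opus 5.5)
Your proof is correct and follows the route the paper indicates. The paper gives no detailed proof; it only remarks that the equality follows from condition (3) of Theorem~\ref{thm:auslander-correspondence}. Your argument fills this in the natural way: a nonzero $F\in\mod H^0\M$ with $\pd_\M F\le d$ would satisfy $F^*\in\D^{\le d}(\M^{\op})$, while condition (3) places $F^*$ in degree $d+1$, forcing $F^*=0$ and hence $F=0$. (The appeal to Lemma~\ref{projrescoh} to get $F\in\D^{\le0}_{\fp}(\M)$ is unnecessary, since this holds by definition for a finitely presented module in degree zero.)
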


\section{Cluster categories of higher stable dg categories}\label{sec:cluster-cat}

From the last section, $d$-stability can be viewed as a twisted version of $(d+1)$-Calabi--Yau property. In this section, following Amiot--Guo--Keller's principle, we introduce the $d$-cluster dg categories of $d$-stable dg categories (Definition \ref{def:cluster-dgcat}). We show that it has the original $d$-stable dg category as a $d$-cluster tilting subcategory (Theorem \ref{thm:fundamental-domain}), and establish Morita theorem for cluster dg categories: every pretriangulated dg category with a $d$-cluster tilting subcategory is realized as a $d$-cluster dg category (Theorem \ref{thm:morita-cluster-dgcat}).

First, we introduce the definition of $d$-cluster dg categories of $d$-stable dg categories. This construction generalizes that of Amiot, Guo, and Keller, which are defined as Verdier
quotients of perfect derived categories by finite-dimensional derived
categories; see \cite[2.1]{Ami09},
\cite[2.2]{Guo}, and \cite[6.11]{Kel11}.

\begin{Def}\label{def:cluster-dgcat}
For a $d$-stable dg category $\M$, we define the {\it $d$-cluster dg category} $\C_{d,\dg}(\M)$ of $\M$ as
\[\C_{d,\dg}(\M):=\per_{\dg}\M/^\mathbb{L}\D^b_{\fp,\dg}(\M).\]
We define the {\it $d$-cluster category} $\C_d(\M)$ of $\M$ as
\[\C_d(\M):=H^0\C_{d,\dg}(\M)=\per\M/\D^b_{\fp}(\M).\]
\end{Def}

The next result generalizes the fundamental-domain equivalences of
Amiot \cite[2.9]{Ami09} and Guo
\cite[2.15]{Guo}.  Here,
the fundamental domain is expressed intrinsically as an extension
closure of shifts of $\M$. For $d=1$, the statement is due to Chen
\cite[3.26]{Che24}.

\begin{Thm}\label{thm:fundamental-domain}
Let $\M$ be a $d$-stable dg category. Then the composition
\[\M*\M[1]*\cdots*\M[d-1]\hookrightarrow\per_{\dg}\M\to\C_{d,\dg}(\M)\]
is a connective equivalence, that is, the functor
\[\tau^{\le0}(\M*\M[1]*\cdots*\M[d-1])\to\tau^{\le0}\C_{d,\dg}(\M)\]
is a quasi-equivalence. In particular, the quasi-fully faithful functor
\[\M\to\tau^{\le0}\C_{d,\dg}(\M)\]
gives a $d$-cluster tilting subcategory.
\end{Thm}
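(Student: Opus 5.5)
The plan follows the Amiot--Guo strategy: compute morphisms in the Verdier quotient $\C_d(\M)=\per\M/\D^b_{\fp}(\M)$ between objects of $\F:=\M*\M[1]*\cdots*\M[d-1]$, using the Auslander correspondence (Theorem~\ref{thm:auslander-correspondence}) to replace the Calabi--Yau duality used classically. Throughout, $\per\M$ carries the standard $t$-structure. By coherence, $\per\M\subseteq\D^-_{\fp}(\M)$, and by Lemma~\ref{lem:fp-flat-projective} together with $\wgl\M\le d+1$, every $N\in\D^b_{\fp}(\M)$ lies in $\per\M$. The duality $(-)^*$ of condition~(3) sends $\mod H^0\M$ to $\mod H^0\M^{\op}[-d-1]$, hence $\D^b_{\fp}(\M)$ to $\D^b_{\fp}(\M^{\op})$ with a shift.

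\textbf{Step 1 (key vanishing).} For $X\in\F$ and $N\in\D^b_{\fp}(\M)$, I will show
\[
\per\M(X,N[i])=0\ \ (i\le 0)\quad\text{and}\quad \per\M(N,X[i])=0\ \ (i\le 0).
\]
The first holds because $X\in\M*\cdots*\M[d-1]$ and $N$ can be truncated; it is the same Hom-vanishing argument against $\D^{\ge?}$, giving vanishing for $i$ in a range determined by the cohomological support of $N$. To make this uniform, I will instead use the structure of $\per\M$. For the second vanishing, I dualize: $\per\M(N,X[i])\cong\per\M^{\op}(X^*,N^*[i])$. Here $X^*\in\M^{\op}*\cdots*\M^{\op}[-(d-1)]$ and, for $N\in\mod H^0\M$, $N^*\in\mod H^0\M^{\op}[-d-1]$. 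Connectivity then gives vanishing for $i\le 1$ after checking degrees. The precise statement I aim for is that $\F$ is left orthogonal to $\D^{\le -d}$-type pieces and right orthogonal to the shifted duals. Via the Amiot argument, this yields that the localization maps
\[
\per\M(X,Y[i])\to\C_d(\M)(X,Y[i])
\]
are bijective for $X,Y\in\F$ and $i\le0$. Concretely, a roof $X\leftarrow X'\to Y$ with cone in $\D^b_{\fp}$ is equivalent to one coming from $\tau^{\ge}$ truncations of $Y$ (the "$\per\M\cap\D^{\le0}$ is a fundamental domain" trick). I will first reduce to showing that $\tau^{\le -d}Y$-type maps and maps into $\D^b_{\fp}$ vanish, using $Y\in\D^{[-(d-1),0]}$-plus-$\per$.

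\textbf{Step 2 (essential surjectivity).} Every $P\in\per\M$ is isomorphic in $\C_d(\M)$ to an object of $\F$. Write $P\in\M[-a]*\cdots*\M[b]$ and use the triangle $\sigma$ from Lemma~\ref{projrescoh}: for $P\in\D^{\le0}$, the cone of $P\to\tau^{\ge -d+1}$-stage is in $\D^b_{\fp}$ modulo a perfect remainder $M_d[d]$ lying in $\M*\M[1]$-type pieces. The aim is to show that truncating the resolution at length $d$ changes $P$ only by objects of $\D^b_{\fp}$. Shifting by $[-1]$ uses the $d$-kernel construction: for $M\in\M$, $M[-1]$ is isomorphic modulo $\D^b_{\fp}$ to an object of $\F$, obtained by taking a right $d$-exact (equivalently, left $d$-exact) complex and noting that its totalization $T_X\in\mod H^0\M\subseteq\D^b_{\fp}$. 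I expect this step, which uses condition~(2) of $d$-stability to turn "cokernels" into objects of $\F$, to be the main obstacle.

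\textbf{Step 3 (cluster tilting).} From Steps~1 and~2, $\tau^{\le0}\F\to\tau^{\le0}\C_{d,\dg}(\M)$ is a quasi-equivalence. Then $\C_d(\M)(\M,\M[i])=H^i\M=0$ for $0<i<d$. Any $Y$ with $\C_d(\M)(\M,Y[i])=0$ for $0<i<d$ lies, via Step~2, in $\F$. There, the vanishing kills the components $\M[1],\dots,\M[d-1]$, by the standard $t$-structure argument, so $Y\in\M$. Functorial finiteness follows from the finite-presentation statements that coherence provides. The dual condition follows symmetrically by using $(-)^*$ and $\M^{\op}$.
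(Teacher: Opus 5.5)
Your proposal has a genuine gap. Step~1 asserts a vanishing that is false, and Step~2 lacks the device that makes essential surjectivity work.

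\textbf{Step~1.} Since $\S:=\D^b_{\fp}(\M)$ is closed under shifts, your two claims (for all $N\in\S$ and $i\le0$) say exactly that $\mathcal{F}\subseteq{}^\perp\S\cap\S^\perp$. This fails already for $X=M\in\M$ with $M\neq0$ in $H^0\M$. The finitely presented module $N=H^0\M(-,M)$ gives $\per\M(M,N)\cong H^0\M(M,M)\neq0$. For $0\neq N\in\mod H^0\M$, condition~(3) of Theorem~\ref{thm:auslander-correspondence} gives $\per\M(N[-d-1],M)\cong\Ext^{d+1}_{\M}(N,M)\neq0$ for suitable $M$.

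What is true, and what the roof calculus needs, is orthogonality to the two halves of \emph{one} $t$-structure on $\S$, cut at the same place:
\begin{itemize}
\item $\mathcal{F}\subseteq{}^\perp(\S\cap\D^{\ge1}(\M))$, which is automatic from $\mathcal{F}\subseteq\D^{\le0}(\M)$;
\item $\mathcal{F}\subseteq(\S\cap\D^{\le1}(\M))^\perp$, which is where the $(d+1)$-shifted duality enters. The functor $(-)^*$ carries $\S\cap\D^{\le i}(\M)$ onto $\D^b_{\fp}(\M^{\op})\cap\D^{\ge d+1-i}(\M^{\op})$, while $X^*\in\D^{\le d-1}(\M^{\op})$ for $X\in\mathcal{F}$.
\end{itemize}
Now take $P,Q\in\mathcal{F}$, $i\le0$, and a roof whose cone $N\in\S$ sits in $Q[i]\to Q'\to N\to Q[i+1]$. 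The piece $\tau^{\le0}N$ maps to zero in $Q[i+1]$, because $(\tau^{\le0}N)[-i-1]\in\S\cap\D^{\le1}$. So the roof can be replaced by one with cone $\tau^{\ge1}N$, and every map $P\to\tau^{\ge1}N$ vanishes. Injectivity uses the same splitting. Your fallback ``left orthogonal to $\D^{\le-d}$-type pieces'' is the wrong half and does not enter this argument.

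\textbf{Step~2.} Let $\pi\colon\per\M\to\C_d(\M)$ be the quotient functor. Your device does not resolve Step~2. A homotopy $d$-kernel of $0\to M$ does show that $\pi(M[-1])\cong\pi(Z)$ for some $Z\in\mathcal{F}$. But $\pi(\mathcal{F})$ is not closed under extensions, so gluing such identifications along a filtration of $P$ would require lifting morphisms of $\C_d(\M)$ whose source or target lies outside $\mathcal{F}$. Nothing in Step~1 provides that.

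The missing idea is that both halves of the $t$-structure on $\S$ extend to decompositions of all of $\per\M$:
\begin{enumerate}
\item $P\cong\tau^{\le0}P$ modulo $\S$, because $\tau^{\ge1}P\in\S$ by coherence and $\wgl\M\le d+1$.
\item Truncate $(\tau^{\le0}P)^*$ at degree $d-1$ over $\M^{\op}$ and dualize back. This gives a triangle $N\to\tau^{\le0}P\to P'\dashrightarrow$ with $N\in\S\cap\D^{\le1}(\M)$ and $P'\in(\S\cap\D^{\le1}(\M))^\perp$.
\item The long exact sequence shows $P'\in\D^{\le0}(\M)$. Comparison with the bounded co-$t$-structure with coheart $\M$ then places $P'$ in $\M*\M[1]*\cdots*\M[d-1]$.
\end{enumerate}
This is the paper's route. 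It uses the two $t$-structures $\T=\X\perp\X^\perp={}^\perp\Y\perp\Y$ with $\X=\S\cap\D^{\le1}$ and $\Y=\S\cap\D^{\ge2}$, the identification $\X^\perp\cap{}^\perp\Y[1]=\M*\cdots*\M[d-1]$, and the Iyama--Yang quotient theorem.

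\textbf{Step~3.} The condition $\C_d(\M)(\M,Y[i])=0$ cannot be read off inside $\mathcal{F}$, since $Y[i]\notin\mathcal{F}$ in general. Instead, use $\C_d(\M)(Y,\M[i])=\per\M(Y,\M[i])$ for $0<i<d$, which holds because both objects lie in $\mathcal{F}$. This lets you peel off the layers $\M[d-1],\dots,\M[1]$ of $Y$. The other half then follows from the duality $\C_d(\M)\simeq\C_d(\M^{\op})^{\op}$ induced by $(-)^*$.
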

\begin{proof}
Put
\[
\T=\per\M,\qquad \S=\D^b_{\fp}(\M),\qquad
\pi\colon\T\longrightarrow\T/\S=\C_d(\M).
\]
By Theorem~\ref{thm:auslander-correspondence}, the dg category $\M$
is coherent and $\wgl\M\le d+1$; hence $\S$ is a thick subcategory
of $\T$. The standard duality
\[
(-)^*=\RHom_{\M}(-,\M)\colon\T\xrightarrow{\sim}\per\M^{\op}
\]
restricts to $\S$ and, since the canonical $t$-structure on $\S$ is
bounded, satisfies
\[
 \bigl(\S\cap\D^{\le i}(\M)\bigr)^*
 =\D^b_{\fp}(\M^{\op})\cap
   \D^{\ge d+1-i}(\M^{\op})
 \qquad(i\in\mathbb Z).
\]

Set
\[
\X=\S\cap\D^{\le1}(\M),\qquad
\Y=\S\cap\D^{\ge2}(\M).
\]
Then $\S=\X\perp\Y$ is a $t$-structure. We claim that it extends on
the two sides to $t$-structures
\begin{equation}\label{eq:two-t-structures}
 \T=\X\perp\X^\perp={}^\perp\Y\perp\Y.
\end{equation}
Indeed, for $P\in\T$, the canonical truncation triangle
\[
 \tau^{\le1}P\longrightarrow P\longrightarrow\tau^{\ge2}P
 \dashrightarrow
\]
has $\tau^{\ge2}P\in\Y$: a perfect module over a connective coherent
dg category is bounded above with finitely presented cohomology, so
this truncation belongs to $\S$. This proves the second $t$-structure
in \eqref{eq:two-t-structures}. Applying the same argument to
$\M^{\op}$ at degree $d$ and then applying $(-)^*$ gives the first one.

The canonical bounded co-$t$-structure on $\T$ has coheart $\M$.
Comparing its decomposition triangles with
\eqref{eq:two-t-structures} gives
\begin{align*}
 {}^\perp\Y[1]
   &=\bigcup_{r\ge0}\M*\M[1]*\cdots*\M[r],\\
 \X^\perp
   &=\bigcup_{r\ge0}\M[d-1-r]*\cdots*\M[d-1].
\end{align*}
Indeed, boundedness gives a finite co-$t$-decomposition of every
perfect module, while the two orthogonality conditions remove,
respectively, the factors in shifts below $0$ and above $d-1$.
Consequently,
\begin{equation}\label{eq:fundamental-domain-identification}
 \Z:=\X^\perp\cap{}^\perp\Y[1]
 =\M*\M[1]*\cdots*\M[d-1].
\end{equation}

We may therefore apply the quotient theorem of Iyama--Yang
\cite[1.1]{IYang20}, in the form
\cite[A.8(2)]{Tom25b}, to
\eqref{eq:two-t-structures}. It shows that the composition
\[
 \Z\hookrightarrow\T\xrightarrow{\pi}\T/\S
\]
is an equivalence. It remains to retain the dg information in
non-positive degrees. Let $P,Q\in\Z$ and $i\le0$. We have
$P\in{}^\perp\Y[1]$ and $Q[i]\in\X^\perp$. Hence
\cite[A.8(1)]{Tom25b} gives an isomorphism
\[
 \T(P,Q[i])\xrightarrow{\sim}
 (\T/\S)(\pi P,\pi Q[i]).
\]
These are precisely the maps on the $i$-th cohomology of the morphism
complexes. Thus the induced quasi-functor
\[
 \tau^{\le0}\Z\longrightarrow\tau^{\le0}\C_{d,\dg}(\M)
\]
is quasi-fully faithful, and the preceding equivalence gives its
quasi-essential surjectivity. It is therefore a quasi-equivalence.
\end{proof}

As a corollary, every $d$-stable dg category can be realized as a $d$-cluster tilting subcategory of a stable dg category. Moreover, we can show that this ambient realization is unique up to quasi-equivalence. We call this reconstruction result the cluster Morita theorem.

\begin{Thm}[Cluster Morita theorem]\label{thm:morita-cluster-dgcat}
Let $\C$ be a pretriangulated dg category and let $\M\subseteq\tau^{\le0}\C$ be a $d$-cluster tilting subcategory. Then we have a quasi-equivalence
\[\C\simeq\C_{d,\dg}(\M)\]
which commutes with the inclusions from $\M$.
\end{Thm}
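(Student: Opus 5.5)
Given a pretriangulated dg category $\C$ with a $d$-cluster tilting subcategory $\M\subseteq\tau^{\le0}\C$, the plan is to build a dg functor $\per_{\dg}\M\to\C$ extending the inclusion, show that it annihilates $\D^b_{\fp,\dg}(\M)$, and then compare the induced functor on the dg quotient with the fundamental domain of Theorem~\ref{thm:fundamental-domain}.

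First, $\C$ is stable in Chen's sense, so Proposition~\ref{prop:cluster-tilting-d-stable} shows that $\M$ is $d$-stable and $\C_{d,\dg}(\M)$ is defined. The inclusion $\M\to\tau^{\le0}\C\to\C$ is a dg functor into a pretriangulated dg category. It therefore extends along the Yoneda embedding to a quasi-functor
\[F\colon\per_{\dg}\M\longrightarrow\C,\]
for instance as $-\otimes^{\mathbb L}_\M\C(?,-)|_\M$ composed with the pretriangulated hull, or via the universal property of $\operatorname{tw}$ on one-sided twisted complexes.

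Next we check that $F$ kills $\S=\D^b_{\fp}(\M)$. Since $\S$ has the bounded $t$-structure, it is generated by $\mod H^0\M$, so it suffices to show $F(T)\simeq0$ for $T\in\mod H^0\M$. By the realization step in the proof of Theorem~\ref{thm:auslander-correspondence}, $T\simeq T_X$ for a left $d$-exact twisted complex $X^\bullet$ in degrees $0,\ldots,d+1$, and $F(T_X)$ is the totalization of $X^\bullet$ in $\C$. So we must show that the totalization in $\C$ of a left $d$-exact complex of $\M$ vanishes. By uniqueness of homotopy $d$-kernels \cite[3.13]{MN26}, $X^\bullet$ is homotopy equivalent to the complex $Y_f^\bullet$ built in the proof of Proposition~\ref{prop:cluster-tilting-d-stable} by splicing conflations of $\C$. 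The totalization of a spliced sequence of exact triangles is contractible, so $F(T)\simeq0$. By the universal property of the Drinfeld quotient \cite{Dri04}, $F$ then factors through a quasi-functor
\[\overline F\colon\C_{d,\dg}(\M)\longrightarrow\C\]
that commutes with the inclusions from $\M$.

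We then show that $\overline F$ is quasi-fully faithful and essentially surjective. By Theorem~\ref{thm:fundamental-domain}, every object of $\C_d(\M)$ lies in $\M*\cdots*\M[d-1]$. In $\C$, the $d$-cluster tilting property gives $H^0\C=\M*\M[1]*\cdots*\M[d-1]$ by the standard approximation argument of Iyama--Yoshino, so $\overline F$ is essentially surjective on $H^0$. For full faithfulness, we show that $\overline F$ induces isomorphisms $H^i\C_{d,\dg}(\M)(M,N)\to H^i\C(M,N)$ for $M,N\in\M$ and all $i\in\mathbb Z$, and then extend to $\M*\cdots*\M[d-1]$, and hence to everything, by five-lemma dévissage. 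For $i\le0$, both sides equal $H^i\M(M,N)$: on the left by the connective equivalence of Theorem~\ref{thm:fundamental-domain}, on the right because $\M\subseteq\tau^{\le0}\C$ is quasi-fully faithful in nonpositive degrees. For $1\le i\le d-1$, both sides vanish by the cluster tilting property, which holds on each side. For $i\ge d$, rewrite $N[i]$ using the defining conflations: in both categories $N[d]$, and more generally $N[i]$, is built from the same left $d$-exact complexes, namely a homotopy $d$-cokernel resolution of $N$, whose image we already know is preserved. This reduces higher degrees to lower ones by induction on $i$.

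The main obstacle is this last step, full faithfulness in degrees $i\ge d$. An alternative route is to prove it only for $i\le d$, where the left side is controlled by \cite[A.8]{Tom25b}, and then to use that both $H^0\C_{d,\dg}(\M)$ and $H^0\C$ are generated by $\M$ with the same ``suspension by $d$'' data coming from $d$-exact complexes. If a cleaner argument is needed, I would instead identify $\C$ with $\per_{\dg}\M/^{\mathbb L}\ker F$ via a dg Verdier-quotient recognition: show that $F$ is a localization, using that $\C$ is generated by $\M$, and that $\ker F=\S$, using Theorem~\ref{thm:fundamental-domain} to see that an object of $\M*\cdots*\M[d-1]$ with $F(P)=0$ is zero.
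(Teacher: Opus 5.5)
There is a gap at the step you flag yourself. Your treatment of $H^i$ on pairs $M,N\in\M$ for $i\ge d$ (``rewrite $N[i]$ using the defining conflations \dots\ induction on $i$'') is not an argument, and neither fallback is carried out. The Verdier-recognition route would need this same full faithfulness to show that $F$ is a localization.

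The missing observation is that high degrees need no separate treatment. Your $\overline F\colon\C_{d,\dg}(\M)\to\C$ is an exact quasi-functor between pretriangulated dg categories, and $H^i(X,Y)\cong H^0(X,Y[i])$ on both sides. So it suffices that $H^0\overline F$ be fully faithful. By Theorem~\ref{thm:fundamental-domain}, every object of $\C_d(\M)$, in particular $N[i]$, is isomorphic to one in $\Z=\M*\cdots*\M[d-1]$. You therefore only need bijectivity of $H^0$ on pairs $X,Y\in\Z$. For $X=M[a]$, $Y=N[b]$ with $0\le a,b\le d-1$ this is $H^0(M[a],N[b])=H^{b-a}(M,N)$. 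The five-lemma d\'evissage along the filtrations of $X$ and $Y$, including the adjacent degrees it invokes, only ever uses $H^j(M,N)$ with $j\le d-1$, which is exactly the range you already settled.

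Your essential surjectivity is also too quick as stated. An object of $\M*\cdots*\M[d-1]\subseteq H^0\C$ lies in the essential image only if the connecting morphisms lift. This does hold once $H^0\overline F$ is fully faithful, since the essential image is then a triangulated subcategory containing $\M$. The paper makes this lifting explicit.

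With this repair, your route is a genuine alternative to the paper's. The paper never builds a functor out of the quotient and never checks that $R$ kills $\D^b_{\fp}(\M)$. It restricts $R\colon\per_{\dg}\M\to\C$ to $\Z$ and proves $R|_{\Z}$ is a connective equivalence, using the d\'evissage just described plus lifting of connecting morphisms. It then combines this with Theorem~\ref{thm:fundamental-domain} to get $\tau^{\le0}\C\simeq\tau^{\le0}\Z\simeq\tau^{\le0}\C_{d,\dg}(\M)$. That step implicitly uses that a pretriangulated dg category is recovered from its stable connective cover.

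Your route pays for an extra vanishing step. Your argument there, via uniqueness of homotopy $d$-kernels and the spliced complex $Y_f^\bullet$ of Proposition~\ref{prop:cluster-tilting-d-stable} together with generation of $\D^b_{\fp}(\M)$ by $\mod H^0\M$, is fine. In exchange, you get the comparison quasi-functor directly from the universal property of the Drinfeld quotient. You need no reconstruction from $\tau^{\le0}$, and compatibility with the quotient functor is automatic.
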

\begin{proof}
Since $\M$ is $d$-cluster tilting in
a stable dg category $\tau^{\le0}\C$, it is $d$-stable by Proposition \ref{prop:cluster-tilting-d-stable}; in particular, $\C_{d,\dg}(\M)$ is defined.

The inclusion $\M\hookrightarrow\C$ extends to an exact
quasi-functor
\[
R\colon\per_{\dg}\M\longrightarrow\C.
\]
Set
\[
\Z:=\M*\M[1]*\cdots*\M[d-1]\subseteq\per_{\dg}\M.
\]
By Theorem~\ref{thm:fundamental-domain}, it is enough to show that $R|_{\Z}$ is a connective
equivalence. For $X,Y\in\Z$, consider the comparison map
\[
 R_{X,Y}\colon\per_{\dg}\M(X,Y)\longrightarrow\C(RX,RY).
\]

First suppose that $X=M[a]$ and $Y=N[b]$, where $M,N\in\M$ and
$0\le a,b\le d-1$. The map $R_{X,Y}$ induces an isomorphism on $H^j$
for $j\le d-1+a-b$. Indeed, if $j+b-a\le0$, this follows from the
fullness of $\M\subseteq\tau^{\le0}\C$; if
$1\le j+b-a\le d-1$, both sides vanish by connectivity and the
$d$-cluster-tilting orthogonality.

Fix $X=M[a]$. We show by induction on $b$ that $R_{X,Y}$ induces an
isomorphism on $H^j$ for $j\le d-1+a-b$ whenever
$Y\in\M*\cdots*\M[b]$. Let
\[
 Y'\longrightarrow Y\longrightarrow N[b]\dashrightarrow
\]
be a triangle with $Y'\in\M*\cdots*\M[b-1]$ and $N\in\M$. For
brevity, write
\[
 E^j(U,V)=H^j\per_{\dg}\M(U,V),\qquad
 F^j(U,V)=H^j\C(RU,RV).
\]
The two long exact cohomology sequences form the following
commutative diagram with exact rows:
\[
\begin{tikzcd}[column sep=small]
 E^{j-1}(X,N[b]) \arrow[r] \arrow[d]
   & E^j(X,Y') \arrow[r] \arrow[d]
   & E^j(X,Y) \arrow[r] \arrow[d,"H^jR_{X,Y}"]
   & E^j(X,N[b]) \arrow[r] \arrow[d]
   & E^{j+1}(X,Y') \arrow[d] \\
 F^{j-1}(X,N[b]) \arrow[r]
   & F^j(X,Y') \arrow[r]
   & F^j(X,Y) \arrow[r]
   & F^j(X,N[b]) \arrow[r]
   & F^{j+1}(X,Y').
\end{tikzcd}
\]
Here all the vertical arrows are induced by $R$.
For $j\le d-1+a-b$, the four outer vertical maps are isomorphisms by
the preceding computation and the induction hypothesis. The five
lemma therefore shows that $H^jR_{X,Y}$ is an isomorphism. Taking
$b=d-1$, we conclude that $R_{M[a],Y}$ induces an isomorphism on
$H^{\le a}$ for every $Y\in\Z$.

Now let $X,Y\in\Z$ be arbitrary. In the same way as the above arguments, we can show that $R_{X,Y}$  is an isomorphism on $H^{\le0}$.

It remains to prove essential surjectivity. For $0\le a\le d-1$, put
\[
\Z_a=\M*\M[1]*\cdots *\M[a]\subseteq\Z.
\]
Since $\M$ is $d$-cluster tilting in $\C$, we have
\[
\C=\M*\M[1]*\cdots *\M[d-1].
\]
We show inductively that every object
$C\in\M*\cdots *\M[a]\subseteq\C$
is isomorphic to $RY$ for some $Y\in\Z_a$.
For $a\ge1$, take a triangle
\[
C'\to C\to M[a]\xrightarrow{\delta}C'[1]
\]
with $C'\in\M*\cdots *\M[a-1]\subseteq\C$. By induction, $C'\cong RY'$ for some $Y'\in\Z_{a-1}$. Since $Y'[1]\in\Z$, $H^0R_{M[a],Y'[1]}$ is an isomorphism by the preceding argument. Thus $\delta$
lifts to a morphism $M[a]\to Y'[1]$ in $\Z$. Completing this lift to a triangle
yields $Y\in\Z_a$, and exactness of $R$ gives $RY\cong X$.
Hence $H^0(R|_\Z)$ is essentially surjective.
\end{proof}

\section{Applications to Amiot's conjecture}\label{sec:Amiot-conj}

In this section, as an application of the Morita theorem for cluster dg categories (Theorem \ref{thm:morita-cluster-dgcat}), we give a proof of a variant of Amiot's conjecture.

Throughout this section, let $k$ denote a field. All dg categories are assumed to be defined over $k$.

We recall the definition of the right Calabi--Yau structures. Observe that for an integer $n\in\mathbb{Z}$ and a dg category $\A$, we have
\[\D(\A^e)(\A[n],D\A)\cong DH^0(\A\otimes_{\A^e}^\mathbb{L}\A[n])\cong D\!\HH_{-n}(\A).\]

\begin{Def}
Let $n\in\mathbb{Z}$, and let $\A$ be a dg category.
\begin{enumerate}
\item $\A$ is called {\it weak right $n$-Calabi--Yau} if we have an isomorphism
\[D\A\cong\A[n]\]
in $\D(\A^e)$.
\item An element $[x]\in D\!\HH_{-n}(\A)$ is called {\it non-degenerate} if the corresponding morphism $\A[n]\to D\A$ in $\D(\A^e)$ is an isomorphism.
\item An element $[\widetilde x]\in D\!\HC_{-n}(\A)$ is called a {\it right $n$-Calabi--Yau structure} if its image $[x]$ under the canonical map $D\!\HC_{-n}(\A)\to D\!\HH_{-n}(\A)$ is non-degenerate.
\end{enumerate}
\end{Def}

Weak right Calabi--Yau dg category enhances Calabi--Yau triangulated categories: if $\A$ is a weak right $n$-Calabi--Yau pretriangulated dg category, then the triangulated category $H^0\A$ is $n$-Calabi--Yau.

Let $A$ be a locally finite connective dg algebra with $\gl A<\infty$. Put
\[\A:=\pvd_{\dg}A,\quad\B:=\per_{\dg}A\quad\text{and}\quad\C:=\B/^\mathbb{L}\A=\cosg_{\dg}(A).\]
By the short exact sequence
\[0\to\A\to\B\to\C\to0\]
of pretriangulated dg categories, we have an exact triangle
\[\HH(\A)\to\HH(\B)\to\HH(\C)\dashrightarrow\]
in $\D(k)$. Since $\HH_{<0}(\B)\cong\HH_{<0}(A)=0$, this yields an isomorphism
\[\delta\colon D\!\HH_{-d-1}(\A)\xrightarrow[\cong]{}D\!\HH_{-d}(\C)\]
for $d\ge1$.

Let $d\ge1$. Assume $\A$ is weak right $(d+1)$-Calabi--Yau, that is, there exists a non-degenerate $[x]\in D\!\HH_{-d-1}(\A)$. Then $\delta([x])\in D\!\HH_{-d}(\C)$ is also non-degenerate (\cite{HL25,KL23}). This enhances Amiot's construction of a Serre functor on Verdier quotients \cite[Section~1]{Ami09}. Observe that under this assumption, $\add_{\dg}A$ is $d$-stable by Theorem \ref{thm:aus-cor-for-loc-fin}. Thus $A\in\C$ is a $d$-cluster tilting object by Theorem \ref{thm:fundamental-domain} (see also \cite{Ami09,Guo}).

The following question of Amiot \cite[Question~2.20]{Ami11} is now
known as Amiot's conjecture.

\begin{Conj}[Amiot's conjecture]
Let $k$ be an algebraically closed field with $\ch k=0$. Let $\C$ be an algebraic $2$-Calabi--Yau triangulated category with a $2$-cluster tilting object. Then there exists a Jacobi-finite quiver with potential $(Q,W)$ such that $\C\simeq\C_{Q,W}$ holds.
\end{Conj}

Keller--Liu have proved this conjecture under the assumption of the existence of a pseudo-compact enhancement. For the notation of deformed dg preprojective algebras, see \cite{Kel11,KL23}

\begin{Thm}\cite[6.2.1]{KL23}
Assume $\ch k=0$. Let $\C$ be a pretriangulated dg category containing a $d$-cluster tilting object and carrying a right $d$-Calabi--Yau structure. Assume that there exists a Frobenius exact pseudo-compact category $\E$ with the full subcategory $\P\subseteq\E$ of projective-injective objects such that $\C=\D^b_{\dg}(\E)/^\mathbb{L}\D^b_{\dg}(\P)$ holds. Then there exist a $(d+1)$-dimensional deformed dg preprojective algebra
\[\Pi:=\Pi_{d+1}(l,V_c,\eta,w)\]
and a quasi-equivalence $\C\simeq\C_{d,\dg}(\Pi)$.
\end{Thm}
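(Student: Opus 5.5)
The plan is to reduce the statement to the intrinsic results of this paper and to add one genuinely external input: the structure theorem for smooth Calabi--Yau pseudo-compact dg algebras, due to Van den Bergh and Ginzburg. Let $M\in\C$ be the $d$-cluster tilting object and put $A=\tau^{\le0}\End_\C(M)$.

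First, by Proposition~\ref{prop:cluster-tilting-d-stable} and the cluster Morita theorem (Theorem~\ref{thm:morita-cluster-dgcat}), $\add_{\dg}A$ is $d$-stable and there is a quasi-equivalence $\C\simeq\C_{d,\dg}(A)$ compatible with $M\mapsto A$. So it suffices to replace $A$ by a deformed dg preprojective algebra $\Pi$ with $\C_{d,\dg}(A)\simeq\C_{d,\dg}(\Pi)$.

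Second, I transport the Calabi--Yau data. The right $d$-Calabi--Yau structure on $\C$ should yield a right $(d+1)$-Calabi--Yau structure on $\pvd_{\dg}A$ through the inverse of the connecting isomorphism $\delta$. This is the Calabi--Yau correspondence quoted in the introduction (Theorem~\ref{thm:delta-detects-nondegeneracy}), which is proved only later. If one prefers to avoid that forward reference, the same conclusion can be obtained as in \cite{KL23} from the Frobenius model $\C=\D^b_{\dg}(\E)/^{\mathbb L}\D^b_{\dg}(\P)$.

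Third, I use the pseudo-compact hypothesis to pass from right to left Calabi--Yau structures. The endomorphism algebra $\Gamma=\End_\E(M)$ of a lift of $M$ to $\E$ is pseudo-compact. Its pseudo-compact derived completion $\widehat A$ recovers $A$ in non-positive degrees. By Koszul duality (Theorem~\ref{thm:koszul-duality} in its pseudo-compact form), $\pvd\widehat A$ is identified with the perfect modules over the positive locally finite algebra $A^!$, which is finite-dimensional in each degree. Dualizing, a right $(d+1)$-Calabi--Yau structure on $\pvd_{\dg}A$ becomes a left (smooth) $(d+1)$-Calabi--Yau structure on the pseudo-compact algebra $\widehat A$. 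This uses $\gl A\le d+2$ (Theorem~\ref{thm:auslander-correspondence}, Lemma~\ref{lem:global-dimension-koszul-dual}) for smoothness and homological finiteness.

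Fourth, I apply Van den Bergh's structure theorem; this is the step needing $\ch k=0$. A pseudo-compact, connective, smooth $(d+1)$-Calabi--Yau dg algebra with semisimple top $l=H^0A/\rad H^0A$ is quasi-isomorphic to a deformed dg preprojective algebra $\Pi_{d+1}(l,V_c,\eta,w)$. Here $V_c$ is read off from the minimal model (the graded dual of the $A_\infty$-Koszul dual $A^!$ truncated suitably), and the negative cyclic class lifting the left Calabi--Yau structure supplies the superpotential $w$ and the deformation $\eta$. Then $\Pi\simeq\widehat A$, and since $\C_{d,\dg}$ only depends on $\per$ and $\pvd$, which agree for $A$ and its completion by local finiteness, we get $\C\simeq\C_{d,\dg}(\Pi)$.

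I expect the main obstacle to be the third step. Making the right-to-left Calabi--Yau transfer precise requires matching (negative) cyclic homology of $\pvd_{\dg}A$ with that of the pseudo-compact completion; this is exactly where the pseudo-compact enhancement $\E$ is used, because it makes the completion and its cyclic invariants well behaved. I would first try to do this via the duality $D\HC(\pvd A)\cong\HC^{-}(\widehat A)$ for locally finite, smooth pseudo-compact algebras, checking non-degeneracy on the Koszul dual side.
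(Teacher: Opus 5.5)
The paper gives no proof of this statement. It quotes \cite[6.2.1]{KL23} and remarks only that the pseudo-compact enhancement is what makes Van den Bergh's structure theorem applicable. Your Steps 1--2 are sound and use only results proved in the paper: Proposition~\ref{prop:cluster-tilting-d-stable}, Theorem~\ref{thm:morita-cluster-dgcat} and Corollary~\ref{cor:bij-CYstr} give $\C\simeq\C_{d,\dg}(A)$ and a right $(d+1)$-Calabi--Yau structure on $\pvd_{\dg}A$. You should, however, justify that $A$ is locally finite. The statement does not assume it, but Theorem~\ref{thm:delta-detects-nondegeneracy} needs it. It follows from non-degeneracy itself: $H^i\C(X,Y)\cong DH^{d-i}\C(Y,X)$, so each such space $V$ satisfies $V\cong DDV$ and is therefore finite dimensional. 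Also, the bound is $\gl A\le d+1$, not $d+2$.

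The genuine gap is Step 3. It is the non-formal core of the theorem, and you assert it rather than prove it.

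First, the pseudo-compact model you describe is not one. $\Gamma=\End_\E(M)$ is an algebra concentrated in degree $0$. Completing it does not produce $H^0A=\underline{\End}_\E(M)$, which is the quotient of $\Gamma$ by the maps factoring through $\P$. Nor does it produce the negative cohomology $H^{-i}A=\underline{\E}(M,\Omega^iM)$.

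Second, at the end of Step 4 you identify $A$ with $\widehat A$ by local finiteness alone. So in your argument the hypothesis on $\E$ never does any work. Suppose the transfer ``right $(d+1)$-Calabi--Yau structure on $\pvd_{\dg}A$ $\Rightarrow$ non-degenerate class in the continuous negative cyclic homology $\HC^{-}_{d+1}(\widehat A)$'' held as you state it. Then your proof would give the deformed-preprojective normal form for every pretriangulated $\C$ with a cluster tilting object and a right Calabi--Yau structure. That removes the pseudo-compact hypothesis from \cite[6.2.1]{KL23}, and it is exactly the conclusion the paper deliberately refrains from claiming in Theorem~\ref{thm:Amiot-conj}.

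So you are missing an actual theorem. One option is to prove this Calabi--Yau Koszul duality for the pair $(A,A^!)$ with $A^!$ proper. That requires comparing $D\HC_{-d-1}(\pvd_{\dg}A)$ with the continuous negative cyclic homology of a pseudo-compact model, and matching the two non-degeneracy conditions. The other option is to construct concretely, from $\E$ and $\P$, a pseudo-compact model of $A$ together with a compatible left Calabi--Yau class. Naming the obstacle, as you do at the end, does not close it, and without it Step 4 has no input.
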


In the proof of \cite[6.2.1]{KL23}, the existence of a pseudo-compact enhancement is used to apply the structure theorem of Calabi--Yau dg algebras \cite{VdB15}.

In this paper, we prove the following variant of Amiot's conjecture without assuming any pseudo-compact enhancement nor any assumption on the ground field $k$.

\begin{Thm}[A variant of Amiot's conjecture]\label{thm:Amiot-conj}
Let $\C$ be a locally finite pretriangulated dg category having a $d$-cluster tilting object $M\in\C$. Put $A:=\tau^{\le0}\End_\C(M)$.
\begin{enumerate}
\item $\add_{\dg}A$ is $d$-stable, and we have a quasi-equivalence
\[\C\simeq\C_{d,\dg}(A).\]
\item If $\C$ is weak right $d$-Calabi--Yau, then $\pvd_{\dg}A$ is weak right $(d+1)$-Calabi--Yau.
\item If $\C$ has a right $d$-Calabi--Yau structure, then $\pvd_{\dg}A$ has a right $(d+1)$-Calabi--Yau structure.
\end{enumerate}
\end{Thm}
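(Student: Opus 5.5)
The plan is to reduce everything to the cluster Morita theorem (Theorem~\ref{thm:morita-cluster-dgcat}) and the Calabi--Yau correspondence stated in the introduction (Theorem~\ref{thm:delta-detects-nondegeneracy}, Corollary~\ref{cor:bij-CYstr}), applied to $\M:=\add_{\dg}M\subseteq\tau^{\le0}\C$. The main work is to identify the objects in those statements with $A$, $\C_{d,\dg}(A)$ and $\pvd_{\dg}A$.

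First I set up $\M$. I take $\M$ to be the full dg subcategory of $\tau^{\le0}\C$ on the direct summands of finite sums of $M$. Since $M$ is a $d$-cluster tilting object, $\add M$ is a $d$-cluster tilting subcategory of $H^0\C$. The summands needed exist in $\C$: this is implicit in the notion of a cluster tilting object, and $H^0\End_\C(M)$ is finite dimensional, hence semiperfect, so $H^0\M$ is Krull--Schmidt and idempotent complete. By construction $\tau^{\le0}\End_\C(M)=A$, so $\M\simeq\add_{\dg}A$ is Morita equivalent to $A$. Proposition~\ref{prop:cluster-tilting-d-stable} then shows that $\add_{\dg}A$ is $d$-stable. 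Theorem~\ref{thm:morita-cluster-dgcat} gives a quasi-equivalence $\C\simeq\C_{d,\dg}(\M)$, and Morita invariance gives $\C_{d,\dg}(\M)\simeq\per_{\dg}A/^{\mathbb L}\D^b_{\fp,\dg}(A)$.

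Second, I identify $\D^b_{\fp,\dg}(\M)$ with $\pvd_{\dg}A$; this finishes (1) and is also needed for (2) and (3).
\begin{itemize}
\item Local finiteness of $\C$ gives that each $H^iA=H^i\End_\C(M)$ with $i\le0$ is finite dimensional, so $A$ is a locally finite connective dg algebra.
\item Since $H^0A$ is finite dimensional, $\mod H^0A$ is exactly the category of finite-dimensional $H^0A$-modules.
\item Hence a dg $A$-module is bounded with finitely presented cohomology if and only if its total cohomology is finite dimensional.
\end{itemize}
Therefore $\D^b_{\fp}(A)=\pvd A$, and $\C\simeq\C_{d,\dg}(A)=\per_{\dg}A/^{\mathbb L}\pvd_{\dg}A$. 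Note also that $\M$ is locally finite, as required in the Calabi--Yau correspondence.

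Third, for (2) and (3) I transport Calabi--Yau data along the quasi-equivalence $\C\simeq\C_{d,\dg}(\M)$. Quasi-equivalences induce isomorphisms on $\HH$ and $\HC$ compatible with the canonical map $\HC\to\HH$. They also identify the bimodules $\C[d]$ and $D\C$, so non-degenerate classes and right $d$-Calabi--Yau structures correspond.
\begin{itemize}
\item For (2): a weak right $d$-Calabi--Yau structure on $\C$ is a non-degenerate class in $D\!\HH_{-d}(\C)\cong D\!\HH_{-d}(\C_{d,\dg}(\M))$. The connecting map $\delta$ is an isomorphism, so this class is $\delta([x])$ for a unique $[x]\in D\!\HH_{-d-1}(\D^b_{\fp,\dg}(\M))$. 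By the ``detects'' direction of Theorem~\ref{thm:delta-detects-nondegeneracy}, $[x]$ is non-degenerate. Hence $\pvd_{\dg}A=\D^b_{\fp,\dg}(\M)$ is weak right $(d+1)$-Calabi--Yau.
\item For (3): the same argument at the level of cyclic homology, via the bijection of Corollary~\ref{cor:bij-CYstr}, gives a right $(d+1)$-Calabi--Yau structure on $\pvd_{\dg}A$.
\end{itemize}

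I expect the main obstacle to be the bookkeeping in the first step: making sure that the cluster tilting subcategory generated by the object $M$ lives in $\tau^{\le0}\C$ in the form Theorem~\ref{thm:morita-cluster-dgcat} requires (a full additive, idempotent-complete subcategory), and that passing between $\M$ and the one-object dg algebra $A$ is harmless. For the latter, the relevant dg categories $\per_{\dg}$, $\D^b_{\fp,\dg}$, the quotient, and their Hochschild and cyclic invariants are all Morita invariant, so I would settle it by one short invocation of derived Morita theory.
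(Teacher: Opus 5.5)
Your proposal is correct and is essentially the paper's proof. The paper proves the statement via the many-object version (Theorem~\ref{thm:many-Amiot-conj}) applied to $\M=\add_{\dg}M$: (1) comes from Proposition~\ref{prop:cluster-tilting-d-stable} and Theorem~\ref{thm:morita-cluster-dgcat}, and (2), (3) from Theorem~\ref{thm:delta-detects-nondegeneracy} and Corollary~\ref{cor:bij-CYstr}, exactly as you do. The paper leaves implicit the identification $\D^b_{\fp}(\add_{\dg}A)=\pvd A$ via local finiteness, which you spell out; but the idempotent completeness of $H^0\M$ comes from the paper's standing convention (idempotents splitting in $H^0\C$), not from semiperfectness of $H^0\End_\C(M)$ alone.
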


Instead of Theorem \ref{thm:Amiot-conj}, we prove the following more general version.

\begin{Thm}[A many-object variant of Amiot's conjecture]\label{thm:many-Amiot-conj}
Let $\C$ be a locally finite pretriangulated dg category having a $d$-cluster tilting subcategory $\M\subseteq\tau^{\le0}\C$.
\begin{enumerate}
\item $\M$ is $d$-stable, and we have a quasi-equivalence
\[\C\simeq\C_{d,\dg}(\M).\]
\item If $\C$ is weak right $d$-Calabi--Yau, then $\D^b_{\fp,\dg}(\M)$ is weak right $(d+1)$-Calabi--Yau.
\item If $\C$ has a right $d$-Calabi--Yau structure, then $\D^b_{\fp,\dg}(\M)$ has a right $(d+1)$-Calabi--Yau structure.
\end{enumerate}
\end{Thm}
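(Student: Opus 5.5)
Part (1) is immediate from what precedes: Proposition \ref{prop:cluster-tilting-d-stable} (applied in the stable dg category $\tau^{\le0}\C$) gives $d$-stability of $\M$, and Theorem \ref{thm:morita-cluster-dgcat} gives a quasi-equivalence $\C\simeq\C_{d,\dg}(\M)$ compatible with the inclusions of $\M$. So the work is in (2) and (3). I would first record finiteness: since $\C$ is locally finite and $\M$ is coherent with $\wgl\M\le d+1$ (Theorem \ref{thm:auslander-correspondence}), the objects of $\D^b_{\fp}(\M)$ have finite-dimensional total cohomology when evaluated at objects of $\M$, and $\per\M\subseteq\D^-_{\fp}(\M)$. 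Put $\A=\D^b_{\fp,\dg}(\M)$, $\B=\per_{\dg}\M$, $\C'=\C_{d,\dg}(\M)\simeq\C$. The exact sequence $0\to\A\to\B\to\C'\to0$ gives a triangle $\HH(\A)\to\HH(\B)\to\HH(\C')\dashrightarrow$, and $\HH(\B)\simeq\HH(\M)$ is concentrated in nonpositive homological degrees by connectivity, so, exactly as in the algebra case recalled above, we obtain the connecting isomorphism $\delta\colon D\!\HH_{-d-1}(\A)\to D\!\HH_{-d}(\C')$ (for $d\ge1$), and similarly on cyclic homology, compatible with the canonical maps $D\!\HC\to D\!\HH$.

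The main step is to show that $\delta$ detects non-degeneracy: if $\delta([x])$ is non-degenerate then $[x]$ is. Granting this, (2) follows because a weak right $d$-CY isomorphism $\C'[d]\cong D\C'$ is a non-degenerate class $[y]\in D\!\HH_{-d}(\C')$, and $[x]=\delta^{-1}[y]$ is then non-degenerate; (3) follows by lifting to $D\!\HC$ via the cyclic connecting isomorphism and using naturality with respect to $D\!\HC\to D\!\HH$.

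For detection, I would translate $[x]$ into a natural transformation $\eta\colon\A(-,-)[d+1]\to D\A(-,-)$, i.e. a family of pairings $\A(X,Y)\otimes\A(Y,X)[d+1]\to k$, and $\delta([x])$ into the induced pairing on the quotient, described (following Amiot and Keller--Liu) by: for $P,Q\in\B$, a morphism in $\C'(P,Q)$ is represented by a roof, and the pairing uses the triangle $P\to Q'\to S\dashrightarrow$ with $S\in\A$ and $\eta_S$. Non-degeneracy of $\eta$ can be tested on a generating set of $\A$; since $\A$ is generated by $\mod H^0\M$ and (by Lemma \ref{lem:fp-flat-projective}) each $F\in\mod H^0\M$ has a finite resolution $F\simeq T_X$ with $X^\bullet$ a homotopy $d$-kernel of some $f\colon M^d\to M^{d+1}$ in $\M$, I would reduce to checking $\eta_{F,G}$ is an isomorphism for $F,G\in\mod H^0\M$. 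Then use the $d$-cluster tilting structure: for $F=\Cok\B(-,f)$, morphisms $\D(\M)(F,G[i])$ should be computed via the images of $M^{d+1}, M^d$ in $\C$ and the Iyama--Yang truncation used in Theorem \ref{thm:fundamental-domain}, identifying $\Hom_\A(F,G[i])$ with a subquotient of $\C(M^{\bullet},N^{\bullet}[i-1])$ on which the $d$-CY pairing of $\C$ restricts perfectly. Concretely I expect an identification $F\cong\Hom_{\C}(-,\,\mathrm{cone}(f))|_\M$-type description, and the duality of Theorem \ref{thm:auslander-correspondence}(3) matches $F^*[d+1]$ with the $\C$-Serre dual.

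The obstacle is precisely this detection step: the forward direction is standard, but the converse requires showing that a degenerate $\eta$ on $\A$ cannot become non-degenerate after the quotient. My first attempt would be a five-lemma argument: the cone of $\eta$ is an $\A$-bimodule $K$; non-degeneracy of $\delta\eta$ means the induced map on $\C'$ is invertible, and I would show $K$ is killed by restriction along $\A\to\B$ followed by passage to $\C'$ only if $K=0$, using that every $F\in\mod H^0\M$ is detected by maps from $\M\subseteq\B$ and local finiteness to dualize $D$ safely. If the bimodule approach stalls, I would fall back on the explicit pairing check on simple-like generators $F\in\mod H^0\M$ described above.
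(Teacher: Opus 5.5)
Your handling of (1), and your reduction of (2) and (3) to the claim that $\delta$ detects non-degeneracy, match what the paper does; (3) comes from the cyclic connecting isomorphism and its compatibility with $D\!\HC\to D\!\HH$. There is a small slip: connectivity puts $\HH(\M)$ in non-negative homological degrees, and it is $\HH_{<0}(\B)=0$ that makes $\delta$ an isomorphism. However, the detection claim is the whole substance of (2) and (3), and your proposal does not prove it. It offers two strategies and carries out neither.

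The pairing route presupposes that morphism spaces in $\D^b_{\fp}(\M)$ are subquotients of morphism spaces in $\C$ on which the $d$-Calabi--Yau pairing \emph{restricts perfectly}. That is precisely the statement to be proved: a perfect pairing does not restrict perfectly to a subquotient without an annihilator compatibility, which you never establish. The bimodule route fails as stated. The cone of $f_x\colon\A[d+1]\to D\A$ is an $\A$-bimodule, and inducing any $\A$-bimodule along $\A\to\B\to\C$ gives zero, since objects of $\A$ become zero in $\C$. So its vanishing cannot be read off from invertibility of $g_x\colon\C[d]\to D\C$ in that way.

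The missing idea is to compare the two sides as $\B$-bimodules, with $\B=\per_{\dg}\M$, via $\K=\B\otimes^{\mathbb L}_{\A}\B$. This bimodule satisfies $\K|_{\A^e}\simeq\A$ and sits in the localization triangle $\K\to\B\to\C\dashrightarrow$. The adjunctions of \cite[4.2.1]{HL25} convert $f_x$ into $\ell\colon\K[d+1]\to D\B$ and $m\colon\B[d+1]\to D\K$. Together with $g_x[1]$, these form a morphism in $\D(\B^e)$ from the triangle $\K[d+1]\to\B[d+1]\to\C[d+1]\dashrightarrow$ to the triangle $D\B\to D\K\to D\C[1]\dashrightarrow$ \cite[4.2.2, 4.3.1(a)]{HL25}. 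If $g_x$ is invertible, then $\operatorname{Cone}(\ell)\cong\operatorname{Cone}(m)$.

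Now evaluate at $M,N\in\M$. Theorem~\ref{thm:fundamental-domain} says $\M(M,N)\to\C(M,N)$ is an isomorphism on $H^{\le0}$, and $d$-rigidity gives $H^i\C(M,N)=0$ for $1\le i\le d-1$. Together these give $\K(M,N)\in\D^{\ge d+1}(k)$. Hence $\operatorname{Cone}(\ell)(M,N)\in\D^{\ge-1}(k)$, while $\operatorname{Cone}(m)(M,N)\in\D^{\le-d-1}(k)$; since $d\ge1$, both vanish. So $\ell$ is invertible on $\M\times\M$, hence on $\B$. Finally, $f_x$ is the restriction of $\ell$ to $\A^e$, using $\K|_{\A^e}\simeq\A$ and $D\B|_{\A^e}\simeq D\A$, so it is an isomorphism.

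Your instinct that one should test against $\M$ is right. But the test must be applied to the $\B$-bimodule map $\ell$ on $\M\times\M$, where this degree gap lives, not to the $\A$-bimodule map $f_x$ on generators of $\A$.
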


This can be proved by combining Morita theorem for cluster dg categories (Theorem \ref{thm:morita-cluster-dgcat}) with the following result. A
crucial role is played by the dg enhancement of Amiot's construction in
\cite[Section~4]{HL25}.

\begin{Thm}\label{thm:delta-detects-nondegeneracy}
Let $\M$ be a locally finite $d$-stable dg category. Consider the isomorphism
\[\delta\colon D\!\HH_{-d-1}(\D^b_{\fp,\dg}(\M))\xrightarrow[\cong]{}D\!\HH_{-d}(\C_{d,\dg}(\M)).\]
For $[x]\in D\!\HH_{-d-1}(\D^b_{\fp,\dg}(\M))$, it is non-degenerate if and only if $\delta([x])$ is non-degenerate.
\end{Thm}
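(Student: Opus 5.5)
Put $\A:=\D^b_{\fp,\dg}(\M)$, $\B:=\per_{\dg}\M$ and $\C:=\C_{d,\dg}(\M)=\B/^{\mathbb L}\A$. The forward implication (non-degenerate $[x]$ gives non-degenerate $\delta([x])$) is the known transfer result of \cite[Section~4]{HL25} and \cite{KL23}. We reprove it in the form needed, and we use the same mechanism for the converse.

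Fix $[x]$ and let $\phi_x\colon\A[d+1]\to D\A$ be the associated bimodule morphism. The bimodule $\phi_x$ amounts to a natural pairing $\A(X,Y)\otimes\A(Y,X)[d+1]\to k$, that is, a natural transformation $\A(X,Y)\to D\A(Y,X)[-d-1]$. Via the embedding $\A\subseteq\B$ and the adjunction $\D(\A^e)\to\D(\B\otimes\A^{\op})$, we extend this to a natural map $\eta_x\colon\B(P,X)\to D\A(X,P')\ldots$. Concretely, following \cite{HL25}, $\delta([x])$ is computed on objects $P,Q\in\B$ as follows. A morphism $\pi P\to\pi Q$ in the quotient is represented by a roof $P\leftarrow P'\to Q$ whose cone lies in $\A$. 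The pairing $\delta([x])$ is then given by the connecting morphism into the cone, followed by the pairing of $[x]$ on $\A$. The key point is that, for $X\in\A$ and $P\in\B$, this yields a map
\[
\psi_{P,X}\colon \B(P,X)\to D\B(X,P)[-d-1],
\]
natural in both variables, which restricts to $\phi_x$ on $\A$. The morphism $\delta([x])\colon\C[d]\to D\C$ is then obtained from the square of these data by passing to the quotient.

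For the converse, assume $\delta([x])$ is non-degenerate. We first show that $\psi_{P,X}$ is an isomorphism for every $P\in\B$. Take $P\in\M$. Since $\wgl\M\le d+1$ and $\M$ is coherent, the Auslander correspondence (Theorem~\ref{thm:auslander-correspondence}) and the fundamental-domain theorem (Theorem~\ref{thm:fundamental-domain}) express $X\in\A$ via truncations. We need the comparison between $\B(P,X)$ and $\C(\pi P,\pi X')$ for suitable $X'\in\B$ with $X'$ having a cone in $\A$. The argument is to use the $t$-structures \eqref{eq:two-t-structures}: for $P\in\Z={}^\perp\Y[1]\cap\X^\perp$ and $X\in\A$, the triangle $\tau^{\le1}X\to X\to\tau^{\ge2}X$, together with the isomorphisms $\T(P,Q[i])\cong(\T/\S)(\pi P,\pi Q[i])$ from \cite[A.8]{Tom25b}, identifies the relevant Hom spaces in $\A$ with Hom spaces in $\C$ between objects of the fundamental domain, up to the shift by one. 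Non-degeneracy of $\delta([x])$ then gives that $\psi_{P,X}$ is an isomorphism for $P\in\M$. Since both sides are exact in $P$, it is an isomorphism for all $P\in\B$ by dévissage.

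Once this holds for $P\in\B$, we restrict to $P\in\A\subseteq\B$. There $\psi_{P,X}$ is exactly the component of $\phi_x$, so $\phi_x$ is an isomorphism on all pairs of objects. Hence $\phi_x$ is a quasi-isomorphism of bimodules, that is, $[x]$ is non-degenerate.

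\textbf{Main obstacle.} The hardest step is the middle one: making precise that the pairing $\delta([x])$ on $\C$, evaluated on objects of the fundamental domain $\Z$, recovers $\psi_{P,X}$ through the connecting morphism. This requires controlling the representatives of quotient morphisms via \cite[A.8]{Tom25b}, keeping track of the degree shift by one, and checking compatibility with the explicit chain-level description of $\delta$ in \cite[Section~4]{HL25}. Local finiteness is used to ensure that $D$ is an honest duality on each Hom space, so that the isomorphism property is detected objectwise.
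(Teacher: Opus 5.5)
Your proposal has a genuine gap. The outer steps are sound and match the paper. Your $\psi_{P,X}$ is the component at $(P,X)$ of the adjoint $\ell\colon\K[d+1]\to D\B$ of $f_x$, where $\K=\B\otimes^{\mathbb L}_{\A}\B$; note that $\K(P,X)\cong\B(P,X)$ when $X\in\A$. Once $\ell$ is known to be invertible on objects of $\M$, d\'evissage over $\B=\per_{\dg}\M$ and restriction to $\A^e$ show that $f_x$ is an isomorphism.

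The gap is the one step that uses the hypothesis: that invertibility of $g_x\colon\C[d]\to D\C$ forces $\psi_{P,X}$ to be invertible for $P\in\M$. You flag this as the main obstacle, and the mechanism you sketch cannot supply it. For $X\in\A$ one has $\pi X\cong0$, so $\B(P,X)$ is not a shift of any Hom space of $\C$. Also, \cite[A.8]{Tom25b} compares $\T(P,Q)$ with $(\T/\S)(\pi P,\pi Q)$, and the latter is zero when $Q\in\S$. More fundamentally, on any pair with an entry in $\A$ the bimodule $\K$ agrees with $\B$ and $\C$ vanishes. So evaluating any comparison with $g_x$ at such a pair carries no information. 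The content of $g_x$ lives on pairs $(M,N)\in\M\times\M$, and it reaches $\ell$ only through the localization triangle $\K\to\B\to\C\dashrightarrow$ in $\D(\B^e)$.

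Three ingredients are missing from the converse.
\begin{enumerate}
\item The compatibility \cite[4.2.2, 4.3.1(a)]{HL25}. This is a morphism of triangles in $\D(\B^e)$ from $\K[d+1]\to\B[d+1]\to\C[d+1]\dashrightarrow$ to $D\B\to D\K\to D\C[1]\dashrightarrow$, with vertical maps $\ell$, $m=\phi(f_x)$ and the restriction of $g_x[1]$. Hence if $g_x$ is invertible, then $\operatorname{Cone}(\ell)\cong\operatorname{Cone}(m)$.
\item The gap estimate $\K(M,N)\in\D^{\ge d+1}(k)$ for $M,N\in\M$, which is where $d$-stability actually enters. The map $\M(M,N)\to\C(M,N)$ is an isomorphism on $H^{\le0}$ by Theorem~\ref{thm:fundamental-domain}, and $H^i\C(M,N)=0$ for $1\le i\le d-1$ by $d$-rigidity.
\item An amplitude argument. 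At $(M,N)$, the source and target of $\ell$ lie in $\D^{\ge0}(k)$, while those of $m$ lie in $\D^{\le-d-1}(k)$. So $\operatorname{Cone}(\ell)(M,N)\in\D^{\ge-1}(k)$ and $\operatorname{Cone}(m)(M,N)\in\D^{\le-d-1}(k)$. Since $d\ge1$, both cones vanish and $\ell(M,N)$ is invertible.
\end{enumerate}

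Separately, your forward direction is only a citation, even though you announce a reproof. The transfer result of \cite{HL25} has hypotheses that must be checked in this many-object setting. First, $\B\to DD\B$ must be invertible, which follows from local finiteness. Second, $\B\to\RHom_{\A}(\B,\B)$ must be an isomorphism in $\D(\B^e)$. The paper reduces the latter, via the dual of \cite[4.3.4]{HL25}, to the existence of local $H^0\A$-envelopes $Y\to\tau^{>a}Y$, with $a$ chosen so that $\D(\M)(X,\D^{\le a}(\M))=0$.
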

\begin{proof}
First, we prove that $\delta$ preserves non-degeneracy. Put
\[
\A=\D^b_{\fp,\dg}(\M),\qquad
\B=\per_{\dg}\M,\qquad
\C=\B/^{\mathbb L}\A=\C_{d,\dg}(\M).
\]
Since $\M$ is locally finite, the evaluation map $\B\to DD\B$ is an isomorphism in $\D(\B^e)$. Thus by \cite[4.3.1(b)]{HL25}, it suffices to show that the canonical map $\B\to\RHom_\A(\B,\B)$ is an isomorphism in $\D(\B^e)$. By the dual of \cite[4.3.4]{HL25}, we have only to prove that for any $X,Y\in H^0\B$, $Y$ has a local $H^0\A$-envelope relative to $X$ in the sense of \cite[1.2]{Ami09}. Since $X\in\B=\per_{\dg}\M$, there exists $a\in\mathbb{Z}$ such that $\D(\M)(X,Z)=0$ holds for any $Z\in\D^{\le a}(\M)$. Consider the truncation
\[\tau^{\le a}Y\to Y\to\tau^{>a}Y\dashrightarrow.\]
Since $Y\in\D^-_{\fp}(\M)$, we have $\tau^{>a}Y\in H^0\A$.
Since $\D(\M)(X,\tau^{\le a}Y)=0$, the morphism
$Y\to\tau^{>a}Y$ gives a local $H^0\A$-envelope relative to $X$.

Next, we prove that $\delta$ detects non-degeneracy. Let $[x]\in D\!\HH_{-d-1}(\A)$, and let
\[
 f_x\colon\A[d+1]\longrightarrow D\A\quad\text{and}\quad
 g_x\colon\C[d]\longrightarrow D\C
\]
be the corresponding morphisms to $[x]$ and $\delta([x])$ respectively.

Put $\K=\B\otimes_{\A}^{\mathbb L}\B$. Let $\ell=\psi^{-1}(f_x)$ and $m=\phi(f_x)$, where $\phi$ and $\psi$ are
the adjunction bijections in \cite[4.2.1]{HL25}. By
\cite[4.2.2, 4.3.1(a)]{HL25}, they fit into a morphism of triangles
in $\D(\B^e)$
\[
\begin{tikzcd}[column sep=large]
 \K[d+1] \arrow[r] \arrow[d,"{\ell}"']
   & \B[d+1] \arrow[r] \arrow[d,"{m}"]
   & \C[d+1] \arrow[r] \arrow[d,"{\overline{g_x[1]}}"]
   & \K[d+2] \arrow[d,"{\ell[1]}"] \\
 D\B \arrow[r]
   & D\K \arrow[r]
   & D\C[1] \arrow[r]
   & D\B[1].
\end{tikzcd}
\]
Here $\overline{g_x[1]}$ is the restriction of $g_x[1]\in\D(\C^e)(\C[d+1],D\C[1])$ along
the quotient functor.

Evaluating the localization
triangle $\K\to\B\to\C\dashrightarrow$ at $(M,N)$ for $M,N\in\M$ gives
\[
 \K(M,N)\longrightarrow \M(M,N)\longrightarrow\C(M,N)\dashrightarrow.
\]
The second morphism is an isomorphism on $H^{\le0}$ by
Theorem~\ref{thm:fundamental-domain}. Moreover, $\M$ is connective and
\[
 H^i\C(M,N)=0\qquad(1\le i\le d-1),
\]
because $H^0\M$ is $d$-rigid in $H^0\C$. Hence
\begin{equation}\label{eq:localization-gap}
 \K(M,N)\in\D^{\ge d+1}(k).
\end{equation}

Assume that $\delta([x])$ is
non-degenerate. Then
$g_x$ is an isomorphism. Thus the above morphism of triangles in $\D(\B^e)$ yields
\[
 \operatorname{Cone}(\ell)\cong\operatorname{Cone}(m).
\]
Evaluate again at $(M,N)$. By \eqref{eq:localization-gap} and the
connectivity of $\M$, the source and target of $\ell(M,N)$ belong to
$\D^{\ge0}(k)$, whereas the source and target of $m(M,N)$ belong to
$\D^{\le-d-1}(k)$. Consequently,
\[
 \operatorname{Cone}(\ell)(M,N)\in\D^{\ge-1}(k),\qquad
 \operatorname{Cone}(m)(M,N)\in\D^{\le-d-1}(k).
\]
Since $d\ge1$, the two isomorphic cones vanish. Thus $\ell(M,N)$ is
an isomorphism. Since $\B=\per_{\dg}\M$, it follows that $\ell$ is an isomorphism.

Finally,
under the adjunction of \cite[4.2.1]{HL25}, $f_x$ is the
restriction of $\ell$ to $\A^e$, using
\[
 \K|_{\A^e}\simeq\A,\qquad (D\B)|_{\A^e}\simeq D\A.
\]
Therefore $f_x$ is an isomorphism. This means that $[x]$ is non-degenerate.
\end{proof}

As an immediate corollary, we obtain a bijection between the sets of right Calabi--Yau structures. Note that since $\HC_{<0}(\B)\cong\HC_{<0}(\M)=0$, we also have a connecting isomorphism
\[\delta\colon D\!\HC_{-d-1}(\D^b_{\fp,\dg}(\M))\xrightarrow[\cong]{}D\!\HC_{-d}(\C_{d,\dg}(\M)).\]

\begin{Cor}\label{cor:bij-CYstr}
Let $\M$ be a locally finite $d$-stable dg category. Then the isomorphism
\[\delta\colon D\!\HC_{-d-1}(\D^b_{\fp,\dg}(\M))\xrightarrow[\cong]{}D\!\HC_{-d}(\C_{d,\dg}(\M)).\]
restricts to a bijection
\[\{\text{right $(d+1)$-Calabi--Yau structures on $\D^b_{\fp,\dg}(\M)$}\}\xrightarrow[\cong]{}\{\text{right $d$-Calabi--Yau structures on $\C_{d,\dg}(\M)$}\}.\]
\end{Cor}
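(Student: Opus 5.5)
The plan is to deduce the corollary formally from Theorem~\ref{thm:delta-detects-nondegeneracy}, using compatibility of the connecting maps in cyclic and Hochschild homology. Put $\A=\D^b_{\fp,\dg}(\M)$, $\B=\per_{\dg}\M$ and $\C=\C_{d,\dg}(\M)$.

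\textbf{Step 1: the two localization sequences and their compatibility.} Keller's localization theorem applies to the exact sequence $0\to\A\to\B\to\C\to0$ and gives exact triangles
\[\HH(\A)\to\HH(\B)\to\HH(\C)\dashrightarrow,\qquad \HC(\A)\to\HC(\B)\to\HC(\C)\dashrightarrow.\]
The canonical map $\HH\to\HC$ is natural, so it gives a morphism from the first triangle to the second. Concretely, $\HC$ is the homotopy orbits $\HH_{hS^1}$, and the morphism is the natural transformation $\HH\to\HH_{hS^1}$ applied to a cofiber sequence of mixed complexes. I will check that the connecting morphisms $\HH_{-d}(\C)\to\HH_{-d-1}(\A)$ and $\HC_{-d}(\C)\to\HC_{-d-1}(\A)$ commute with these canonical maps. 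This holds because the mixed-complex triangle maps to the triangle of homotopy orbits.

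\textbf{Step 2: the two $\delta$'s commute.} The vanishings $\HH_{<0}(\B)=0$ and $\HC_{<0}(\B)=0$ make both connecting maps isomorphisms in degree $-d$, for $d\ge1$. Dualizing the commutative square from Step~1 gives
\[D\!\HC_{-d-1}(\A)\xrightarrow{\delta}D\!\HC_{-d}(\C),\qquad D\!\HH_{-d-1}(\A)\xrightarrow{\delta}D\!\HH_{-d}(\C),\]
whose composites with the canonical maps $D\!\HC\to D\!\HH$ agree. That is, $\pi_\C\circ\delta=\delta\circ\pi_\A$, where $\pi$ denotes $D\!\HC_{-n}\to D\!\HH_{-n}$.

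\textbf{Step 3: conclusion.} Let $[\widetilde x]\in D\!\HC_{-d-1}(\A)$. By definition, $[\widetilde x]$ is a right $(d+1)$-Calabi--Yau structure if and only if $\pi_\A[\widetilde x]$ is non-degenerate. By Theorem~\ref{thm:delta-detects-nondegeneracy}, this holds if and only if $\delta(\pi_\A[\widetilde x])=\pi_\C(\delta[\widetilde x])$ is non-degenerate. That is exactly the statement that $\delta[\widetilde x]$ is a right $d$-Calabi--Yau structure on $\C$. Since $\delta$ is a bijection on $D\!\HC$, it restricts to a bijection between the two subsets.

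\textbf{Main obstacle.} The only nonformal point is the commutativity in Step~1: the $\HC$ connecting map must be compatible with the $\HH$ one, including signs. A sign discrepancy would not matter anyway, since non-degeneracy is invariant under multiplication by $-1$.
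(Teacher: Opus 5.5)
Your proposal is correct and follows the paper's own proof. The paper likewise deduces the corollary from Theorem~\ref{thm:delta-detects-nondegeneracy} together with the commutative square in which the connecting isomorphisms on $D\!\HC$ and $D\!\HH$ are linked by the canonical maps $D\!\HC_{-n}\to D\!\HH_{-n}$; you additionally spell out why that square commutes (naturality of Keller's localization sequences) and the set-theoretic restriction argument, both of which the paper leaves implicit.
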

\begin{proof}
This follows from Theorem \ref{thm:delta-detects-nondegeneracy} and the following commutative diagram.
\[\xymatrix{
D\!\HC_{-d-1}(\D^b_{\fp,\dg}(\M)) \ar[r]^{\cong} \ar[d] & D\!\HC_{-d}(\C_{d,\dg}(\M)) \ar[d] \\
D\!\HH_{-d-1}(\D^b_{\fp,\dg}(\M)) \ar[r]_{\cong} & D\!\HH_{-d}(\C_{d,\dg}(\M))
}\]
\end{proof}

\begin{proof}[Proof of Theorem \ref{thm:many-Amiot-conj}]
(1) follows from Proposition \ref{prop:cluster-tilting-d-stable} and Theorem \ref{thm:morita-cluster-dgcat}. (2) follows from Theorem \ref{thm:delta-detects-nondegeneracy}. (3) follows from Corollary \ref{cor:bij-CYstr}.
\end{proof}

\section{The case of locally finite connective dg algebras}

In this section, we specialize Sections \ref{sec:Aus-corr} and \ref{sec:cluster-cat} to locally finite connective dg algebras over a field. This specialization enables us to use Koszul duality in the sense of \cite{Fus}.

Throughout this section, let $k$ denote a field. All dg categories are assumed to be defined over $k$.

First, we interpret conditions in Theorem \ref{thm:auslander-correspondence} in the case of locally finite connective dg algebras. In this setting, the homological conditions can be tested on the semisimple module
$S=H^0A/\rad H^0A$, and condition (3) admits a Koszul-dual
reformulation.

\begin{Thm}\label{thm:aus-cor-for-loc-fin}
Let $A$ be a locally finite connective dg
algebra. Put $S:=H^0A/\operatorname{rad}H^0A$. Then the following
conditions are equivalent.
\begin{enumerate}
\item $\add_{\dg}A$ is $d$-stable.
\item $\gl A\le d+1$, and
\[
\Ext_A^i(S,A)=0\quad(i\ne d+1),
\qquad
\Ext_A^{d+1}(S,A)\in\add S\subseteq\per A^{\op}.
\]
\item In $\D(A^!)$, we have
\[\add DA^!=\add A^![d+1].\]
\end{enumerate}
\end{Thm}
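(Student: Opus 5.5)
The plan is to go through Theorem~\ref{thm:auslander-correspondence}, showing (1)$\Leftrightarrow$(2). I would then show (2)$\Leftrightarrow$(3) by transporting condition~(2) across Koszul duality with Lemma~\ref{lem:koszul-duality}.

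\emph{Reduction of conditions (1) and (2) of Theorem~\ref{thm:auslander-correspondence}.} Since $A$ is locally finite, $H^0A$ is finite dimensional, hence noetherian on both sides. Each $H^{-n}A$ is finite dimensional, hence finitely presented on both sides. So $\add_{\dg}A$ is automatically coherent.

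Next, $\mod H^0A$ is a length category whose simples are the summands of $S$. By the corollary after Lemma~\ref{lem:fp-flat-projective}, together with the long exact sequences for $\RHom_A(-,N)$, we get
\[
\wgl A=\sup\{\pd_A F\mid F\in\mod H^0A\}=\pd_AS.
\]
I also expect $\gl A=\pd_AS$: for a finite-dimensional $H^0A$, every module in $\Mod H^0A$ should be reachable from the simples. If this is awkward in the dg setting, I will simply read ``$\gl A$'' in (2) as this quantity. Hence conditions (1) and (2) of the Auslander correspondence together amount to $\pd_AS\le d+1$. By Lemma~\ref{lem:fp-flat-projective} this also gives $S\in\per A$, so $S^*:=\RHom_A(S,A)$ is defined and $H^iS^*=\Ext^i_A(S,A)$.

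\emph{Condition (3) of the Auslander correspondence versus the Ext condition.} The duality $(-)^*$ is exact and fully faithful, and $\mod H^0A$ is the extension closure of $\add S$. So the restriction in (3) holds as soon as $S^*[d+1]$ lies in $\mod H^0A^{\op}$ and its image contains every simple right $H^0A^{\op}$-module. Essential surjectivity then follows because the image of $\mod H^0A$ is extension closed.

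The Ext condition in (2) says exactly that $S^*[d+1]\in\add S_{A^{\op}}$. For the converse inclusion, full faithfulness gives $\End(S^*[d+1])\cong\End(S)^{\op}$, a product of division rings with one factor per isoclass of simple $H^0A$-modules. Hence $S^*[d+1]$ is a multiplicity-free sum of as many pairwise non-isomorphic simples as $H^0A$ has. By $\add DS_A=\add S_{A^{\op}}$, this is the number of simple $H^0A^{\op}$-modules, so every simple is hit.

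Conversely, (1) gives $\pd_AS\le d+1$ by the reduction above, and sending $S\in\mod H^0A$ through (3) yields the Ext condition. This proves (1)$\Leftrightarrow$(2). The same argument for $A^{\op}$ shows that (2) is left–right symmetric, since both sides are equivalent to the symmetric condition (1).

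\emph{(2)$\Leftrightarrow$(3).} Put $B=A^!$. From (2), $S\in\per A$, and by the symmetry just noted $DS_A\in\per A^{\op}$. So $U_A=(DS_A)^*$ is defined and Lemma~\ref{lem:koszul-duality} gives $\Phi_A(U_A)\cong DB$. Also $\Phi_A(S_A)=B$, so $\Phi_A(S_A[-d-1])=B[d+1]$.

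Since $\Phi_A$ is a contravariant equivalence by Theorem~\ref{thm:koszul-duality}, condition (3) becomes $\add U_A=\add S_A[-d-1]$ in $\per A$. By the counting argument of the previous paragraph applied on the left, this is the left-handed version of the Ext condition, and hence equivalent to (2).

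For (3)$\Rightarrow$(2) we must first check that $S_A$ is perfect, so that the argument applies. Condition (3) forces $B$ to have bounded cohomology, because $DB$ is concentrated in degrees $\le0$ while $B[d+1]$ lives in degrees $\ge -d-1$. The lower row of Theorem~\ref{thm:koszul-duality} then places $S_A=\Phi_A^{-1}(B)$ in $\pvd A$ with finite-dimensional total Koszul dual. I would argue via Lemma~\ref{lem:global-dimension-koszul-dual} and minimal resolutions that this gives $S_A\in\per A$ and $\gl A=\sup\{i\mid H^iB\ne0\}\le d+1$.

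The main obstacle I expect is this bookkeeping: extracting perfectness of $S$ from (3), and matching left and right module conventions in $\D(A^!)$ versus $\D((A^!)^{\op})$ under $\Phi_A$.
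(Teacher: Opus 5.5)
Your plan follows the paper's proof. You reduce (1)$\Leftrightarrow$(2) to Theorem~\ref{thm:auslander-correspondence} plus a count of simples. You then prove (2)$\Leftrightarrow$(3) by transporting along $\Phi_A$ with $\Phi_A(S_A)\cong B$ and $\Phi_A(U_A)\cong DB$, using Lemma~\ref{lem:global-dimension-koszul-dual} for the dimension bound. Using the self-duality of $d$-stability to get $DS_A\in\per A^{\op}$ from (2) is a clean way to justify something the paper only asserts.

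\textbf{The gap.} The step you flag as the main obstacle, perfectness of $S_A$ in (3)$\Rightarrow$(2), is not resolved, and the route you indicate does not work.
\begin{itemize}
\item The lower row of Theorem~\ref{thm:koszul-duality} says nothing here: $S_A\in\pvd A$ and $B\in\per B^{\op}$ hold for every $A$.
\item Lemma~\ref{lem:global-dimension-koszul-dual} \emph{assumes} $S_A\in\per A$, so it cannot be used to produce it.
\item The missing input is the \emph{upper} row. Condition (3) makes $B$ proper, so $\Phi_A(S_A)\cong B\in\pvd B^{\op}$. The upper row is the restriction of the fully faithful middle row, and $S_A\in\D^-_{\mathrm{fd}}(A)$, so $S_A\in\per A$.
\item You need the same on the other side. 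Using $(A^{\op})^!\simeq B^{\op}$, you get $S_{A^{\op}}\in\per A^{\op}$, hence $DS_A\in\per A^{\op}$. Without this, $U_A$ need not lie in $\per A$, and Lemma~\ref{lem:koszul-duality} is unavailable in this direction. Your symmetry argument only gives it once (2) is known.
\item Only after this does Lemma~\ref{lem:global-dimension-koszul-dual} give $\gl A\le d+1$.
\end{itemize}
Your minimal-resolution idea can replace the upper-row argument, but it must be carried out. For the resolution of Lemma~\ref{projrescoh} built from projective covers, $\Ext^i_A(S,S)\cong\Hom(H^0P_i,S)$. So $H^{>d+1}B=0$ forces $P_i=0$ for $i>d+1$, and then $M_{d+2}=0$.

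\textbf{Smaller corrections.}
\begin{enumerate}
\item $\Phi_A$ lands in $\D(B^{\op})$. Transporting $\add U_A=\add S_A[-d-1]$ therefore gives $\add DB=\add B[d+1]$ in $\D(B^{\op})$, not (3). You must apply $D$ and shift, which is legitimate because $B$ is locally finite, so $DDB\cong B$. In the converse, apply $D$ to (3) before pulling back.
\item In the counting argument, $\End_{H^0A}(S)\cong H^0A/\rad H^0A$ is a product of matrix rings over division rings. Also, $S$ is not multiplicity-free unless $H^0A$ is basic. The conclusion survives if you count simple Artinian factors instead. Alternatively, argue simple by simple as the paper does: each $S_i^*[d+1]$ is semisimple with division-ring endomorphisms, hence simple, and $\Hom(S_i^*,S_j^*)\cong\Hom(S_j,S_i)=0$ for $i\neq j$.
\item You may not simply reinterpret $\gl A$ as $\pd_AS$, but the equality is true and easy. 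Every $H^0A$-module has a finite radical filtration with semisimple layers. Moreover, $\RHom_A(\bigoplus_\alpha S_\alpha,N)\cong\prod_\alpha\RHom_A(S_\alpha,N)$.
\end{enumerate}
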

\begin{proof}
Put $B=A^!$ and $T=\RHom_A(S,A)$. Local finiteness of $A$
implies that $\add_{\dg}A$ is coherent, and
Lemma~\ref{lem:fp-flat-projective}, together with the fact that
$\mod H^0A$ is a length category, gives
\[
\wgl(\add_{\dg}A)=\gl A.
\]
Thus Theorem~\ref{thm:auslander-correspondence} reduces
$(1)\Longleftrightarrow(2)$ to the following observation. Under
$\gl A\le d+1$, the standard duality restricts to
\[
\mod H^0A\xrightarrow{\sim}\mod H^0A^{\op}[-d-1]
\]
if and only if
$T\in\add S[-d-1]$. Indeed, the forward implication follows by applying
the restricted duality to $S$. Conversely, assume this membership and
choose representatives $S_1,\ldots,S_r$ of the simple
$H^0A$-modules. The modules $\RHom_A(S_i,A)[d+1]$ are semisimple, and
full faithfulness shows that they are nonzero, simple, and pairwise
non-isomorphic. They form a complete collection of simple
$H^0A^{\op}$-modules because the two algebras have the same number of
simple modules. Taking extension closures gives the restricted
duality. This argument also shows that
\begin{equation}\label{eq:add-gorenstein-dual}
\add T=\add S[-d-1]
\quad\text{in }\D(A^{\op}).
\end{equation}
The membership of $T$ above is precisely the Ext condition in (2), so
$(1)\Longleftrightarrow(2)$ follows.

We prove $(2)\Longleftrightarrow(3)$. On the
$A^{\op}$-side, $DS$ and $S$ are semisimple generators, so
$\add DS=\add S$. Put
\[
U:=\RHom_{A^{\op}}(DS,A).
\]
Whenever $DS\in\per A^{\op}$, the standard duality identifies
\eqref{eq:add-gorenstein-dual} with
\begin{equation}\label{eq:add-gorenstein-dual-opposite}
\add U=\add S[-d-1]
\quad\text{in }\D(A).
\end{equation}
Indeed, applying $\RHom_{A^{\op}}(-,A)$ to
$\add T=\add DS[-d-1]$ gives $\add S=\add U[d+1]$.

Assume (2). Then $S\in\per A$ and $DS\in\per A^{\op}$, and
Lemma~\ref{lem:koszul-duality} gives
\[
\Phi_A(S)\cong B,
\qquad
\Phi_A(U)\cong DB
\quad\text{in }\D(B^{\op}).
\]
Applying $\Phi_A$ to
\eqref{eq:add-gorenstein-dual-opposite} yields
\begin{equation}\label{eq:frobenius-opposite}
\add DB=\add B[d+1]
\quad\text{in }\D(B^{\op}).
\end{equation}
Moreover, $B$ is proper by Lemma~\ref{lem:koszul-duality}.
Applying $D$ to \eqref{eq:frobenius-opposite} and shifting by $d+1$
gives $\add DB=\add B[d+1]$ in $\D(B)$, which is (3).

Conversely, assume (3). The dg algebra $B$ is positive and $DB$ is
non-positive. Since $B[d+1]\in\add DB$, we have $H^{>d+1}B=0$, so $B$ is
proper. Applying $D$ to (3) and shifting by $d+1$ gives
\eqref{eq:frobenius-opposite}. Lemma~\ref{lem:koszul-duality},
applied to $A$ and $A^{\op}$, shows that
$S\in\per A$ and $DS\in\per A^{\op}$. Hence
Lemma~\ref{lem:global-dimension-koszul-dual} gives
\[
\gl A=\sup\{i\mid H^iB\neq0\}\le d+1.
\]
By Theorem~\ref{thm:koszul-duality}, the functor $\Phi_A$ is fully
faithful on $\per A$. Pulling \eqref{eq:frobenius-opposite} back and
using Lemma~\ref{lem:koszul-duality} gives
\eqref{eq:add-gorenstein-dual-opposite}. Applying
$\RHom_A(-,A)$ then gives
\[
\add T=\add DS[-d-1]=\add S[-d-1].
\]
Thus \eqref{eq:add-gorenstein-dual} holds, and hence so does (2).
\end{proof}

\begin{Rem}
The condition (2) in Theorem \ref{thm:aus-cor-for-loc-fin} can be interpreted as a Gorenstein condition. The condition (3) can be interpreted as a twisted $(d+1)$-Calabi--Yau property: if $A$ is smooth, then $A$ is bimodule $(d+1)$-Calabi--Yau if and only if $DA^!\cong A^![d+1]$ holds in $\D({A^!}^e)$. Thus the relationship between $d$-stability and $(d+1)$-Calabi--Yau property is parallel to that of self-injective algebras and symmetric algebras.
\end{Rem}

As a corollary, Koszul duality identifies locally finite connective dg algebras $A$ for which $\add_{\mathrm{dg}}A$ is $d$-stable with proper pvd-finite positive dg algebras satisfying the shifted self-injectivity condition below. Recall from \cite[3.23]{Fus} that a locally finite positive dg algebra $B$ is called {\it pvd-finite} if $\pvd B$ is Hom-finite.

\begin{Cor}
By taking Koszul dual, we have a bijection between the set of quasi-equivalence classes of the following dg algebras.
\begin{enumerate}
\item Locally finite connective dg algebras $A$ such that $\add_{\dg}A$ is $d$-stable.
\item Proper pvd-finite positive dg algebras $B$ such that $\add DB=\add B[d+1]$ holds in $\D(B)$.
\end{enumerate}
\end{Cor}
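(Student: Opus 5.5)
The map sends $A$ to $A^!=\REnd_A(S_A)$. The plan is to show that this map lands in class (2), that it is surjective onto (2) via the inverse Koszul duality $B\mapsto B^!$, and that it is injective.

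\textbf{Well-definedness.} Let $A$ be in class (1) and put $B=A^!$. By Theorem~\ref{thm:koszul-duality}, $B$ is locally finite and positive. By Theorem~\ref{thm:aus-cor-for-loc-fin}, $\add DB=\add B[d+1]$ in $\D(B)$, and the converse part of that proof already observed that $B$ is proper. For pvd-finiteness, I will use the equivalence $\pvd_{\dg}A\simeq(\per_{\dg}B^{\op})^{\op}$ from Theorem~\ref{thm:koszul-duality}. Since $\gl A\le d+1$ and $A$ is locally finite, Lemma~\ref{lem:fp-flat-projective} gives $S_A\in\per A$. Applying $\Phi_A$ then gives $B^{\op}\cong\Phi_A(S_A)\in\pvd B^{\op}$ in the appropriate sense. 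The middle row of Theorem~\ref{thm:koszul-duality} identifies $\D^-_{\mathrm{fd}}(A)$ with $\D^+_{\mathrm{fd}}(B^{\op})$, and under it $\per A$ corresponds to $\pvd B^{\op}$. Because $A$ is locally finite, Hom spaces in $\per A$ are finite dimensional, so $\pvd B^{\op}$ is Hom-finite. I must also transfer this to $\pvd B$ itself, which is what pvd-finiteness asks. I would do this through the left--right symmetry $(A^{\op})^!\simeq B^{\op}$ of Lemma~\ref{lem:koszul-duality}, combined with $d$-stability of $\add_{\dg}A^{\op}$; the latter follows from the symmetric form of Theorem~\ref{thm:auslander-correspondence}.

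\textbf{Inverse map.} Conversely, let $B$ be a proper pvd-finite positive dg algebra with $\add DB=\add B[d+1]$. I set $A:=\REnd_{B}(S_B)$, with sides chosen so that it matches \cite{Fus}. The reverse Koszul duality of \cite{Fus} for pvd-finite positive dg algebras should give that $A$ is locally finite and connective, and that $A^!\simeq B$. Pvd-finiteness is exactly what guarantees local finiteness of $A$, since $H^iA=\Hom_{\pvd B}(S_B,S_B[i])$. Condition (3) of Theorem~\ref{thm:aus-cor-for-loc-fin} then holds for $A$, so $\add_{\dg}A$ is $d$-stable.

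\textbf{Injectivity.} If $A_1^!\simeq A_2^!$, then the double-dual isomorphism $(A^!)^!\simeq A$ from \cite{Fus} gives $A_1\simeq A_2$. This uses only local finiteness and connectivity; by the completion-type statement in \cite{Fus}, no separate completeness hypothesis should be needed in the locally finite case.

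\textbf{Main obstacle.} I expect the main difficulty to be verifying the hypotheses of the biduality statements in \cite{Fus}, in particular that pvd-finiteness together with properness is exactly the right condition for $B\mapsto B^!$ to land among locally finite connective algebras and to be inverse to $A\mapsto A^!$. I would first check that \cite{Fus} states the Koszul duality correspondence as a bijection between locally finite connective dg algebras and pvd-finite locally finite positive ones, and then simply restrict it using Theorem~\ref{thm:aus-cor-for-loc-fin}.
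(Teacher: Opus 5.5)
Your proposal is correct and follows essentially the paper's route. The paper's proof simply restricts Fushimi's Koszul duality bijection \cite[4.3]{Fus} along the equivalence $(1)\Leftrightarrow(3)$ of Theorem~\ref{thm:aus-cor-for-loc-fin}, with properness of $B$ automatic from $B[d+1]\in\add DB$. Your extra verifications are consistent but redundant given \cite[4.3]{Fus}. In particular, pvd-finiteness of $A^!$ needs no $d$-stability: Theorem~\ref{thm:koszul-duality} applied to $A^{\op}$, together with $(A^{\op})^!\simeq B^{\op}$, gives $\per A^{\op}\simeq(\pvd B)^{\op}$ for any locally finite connective $A$.
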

\begin{proof}
This is a restriction of the bijection \cite[4.3]{Fus} to $d$-stable ones and $(d+1)$-shifted self-injective ones.
\end{proof}

Next, by using Koszul dual, we can interpret the cluster dg categories as singularity categories. Here, for a proper dg algebra $B$, put
\[\sg_{\dg}B:=\pvd_{\dg}B/^\mathbb{L}\per_{\dg}B.\]

\begin{Cor}
Let $A$ be a locally finite connective dg algebra such that $\add_{\dg}A$ is $d$-stable. Put $B:=A^!$. Then $B$ is proper and we have a quasi-equivalence
\[\C_{d,\dg}(A)\simeq\sg_{\dg}B.\]
\end{Cor}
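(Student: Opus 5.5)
The plan is to transport the defining Verdier quotient of $\C_{d,\dg}(A)$ along the Koszul duality functor $\Phi_A$ of Theorem~\ref{thm:koszul-duality}. Since $\add_{\dg}A$ is $d$-stable, Theorem~\ref{thm:aus-cor-for-loc-fin} gives $\gl A\le d+1$ and $\add DB=\add B[d+1]$. As observed in the proof of that theorem, the latter forces $H^{>d+1}B=0$. Since $B$ is locally finite and positive, this means $B$ is proper, which settles the first assertion.

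For the quasi-equivalence, I first identify the two categories being quotiented. Because $A$ is locally finite, $H^0A$ is finite dimensional and hence coherent, and finitely presented $H^0A$-modules are exactly the finite-dimensional ones. Thus $\D^b_{\fp}(\add_{\dg}A)=\pvd A$, and $\C_{d,\dg}(A)=\per_{\dg}A/^{\mathbb L}\pvd_{\dg}A$. The top and bottom rows of Theorem~\ref{thm:koszul-duality} then give a contravariant quasi-equivalence $\per_{\dg}A\simeq(\pvd_{\dg}B^{\op})^{\op}$. It restricts on $\pvd_{\dg}A$ to $(\per_{\dg}B^{\op})^{\op}$; here I use $\pvd A\subseteq\per A$, which follows from $\gl A<\infty$ and Lemma~\ref{lem:fp-flat-projective}, together with $\per B^{\op}\subseteq\pvd B^{\op}$, which holds because $B$ is proper. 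A quasi-equivalence of pairs induces a quasi-equivalence of Drinfeld quotients, and taking opposites commutes with dg quotients. This yields
\[
\C_{d,\dg}(A)\simeq\bigl(\pvd_{\dg}B^{\op}/^{\mathbb L}\per_{\dg}B^{\op}\bigr)^{\op}=(\sg_{\dg}B^{\op})^{\op}.
\]

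It then remains to compare $(\sg_{\dg}B^{\op})^{\op}$ with $\sg_{\dg}B$. Since $B$ is proper, the $k$-linear duality $D=\Hom_k(-,k)$ gives a contravariant quasi-equivalence $\pvd_{\dg}B^{\op}\simeq(\pvd_{\dg}B)^{\op}$. It sends $\per B^{\op}=\operatorname{thick}(B)$ to $\operatorname{thick}(DB)$ in $\D(B)$, and by condition (3) of Theorem~\ref{thm:aus-cor-for-loc-fin}, $\operatorname{thick}(DB)=\operatorname{thick}(B[d+1])=\per B$. Passing to quotients then gives $(\sg_{\dg}B^{\op})^{\op}\simeq\sg_{\dg}B$, and the two opposites cancel.

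I expect this last step to be the main obstacle, because it is exactly where $d$-stability enters beyond $\gl A<\infty$. Two points need care. First, $D$ must be a genuine dg quasi-equivalence on the pvd-categories, which requires properness of $B$ so that $DD\simeq\mathrm{id}$ on $\pvd$. Second, the equality $\add DB=\add B[d+1]$ must upgrade to an equality of thick subcategories; this is immediate, since each of the two objects lies in the thick closure of the other. As an alternative for this step, one could compose $\Phi_A$ with $D$ directly and use Lemma~\ref{lem:koszul-duality}, which gives $\Phi_A(U)\cong DB$, to obtain a covariant functor $\per_{\dg}A\to\pvd_{\dg}B$ sending $\pvd_{\dg}A$ onto $\per_{\dg}B$.
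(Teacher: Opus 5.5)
Your proposal is correct and is essentially the paper's proof. Koszul duality identifies the pair $\pvd_{\dg}A\subseteq\per_{\dg}A$ with $(\per_{\dg}B^{\op})^{\op}\subseteq(\pvd_{\dg}B^{\op})^{\op}$. The duality $D$ together with $\add DB=\add B[d+1]$ then turns this into $\per_{\dg}B\subseteq\pvd_{\dg}B$, and one passes to dg quotients. The paper composes the equivalences before quotienting while you quotient first, which makes no difference. Properness comes from $H^{>d+1}B=0$, exactly as in the proof of Theorem~\ref{thm:aus-cor-for-loc-fin}.

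You are more explicit than the paper about why the restrictions exist, via $\pvd A\subseteq\per A$ and $\per B^{\op}\subseteq\pvd B^{\op}$. One small inaccuracy: $DD\simeq\mathrm{id}$ on $\pvd$ holds for any $B$. What properness is actually needed for is to put $B$ itself, and hence $\per B$, inside $\pvd B$.
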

\begin{proof}
By Theorem \ref{thm:koszul-duality}, we have quasi-equivalences
\[
\begin{tikzcd}[column sep=large,row sep=small]
\per_{\dg}A
  \arrow[rr,"\sim"]
&& (\pvd_{\dg}B^{\op})^{\op} \\
\pvd_{\dg}A
  \arrow[rr,"\sim"] \arrow[u,hook]
&& (\per_{\dg}B^{\op})^{\op}
  \arrow[u,hook].
\end{tikzcd}
\]
Since $\add DB=\add B[d+1]$, the duality $D\colon\pvd_{\dg}B^{\op}\xrightarrow[\simeq]{}(\pvd_{\dg}B)^{\op}$ restricts to
\[
\begin{tikzcd}[column sep=large,row sep=small]
\pvd_{\dg}B^{\op}
  \arrow[rr,"\sim"]
&& (\pvd_{\dg}B)^{\op} \\
\per_{\dg}B^{\op}
  \arrow[rr,"\sim"] \arrow[u,hook]
&& (\per_{\dg}B)^{\op}
  \arrow[u,hook].
\end{tikzcd}
\]
Combining these, we have quasi-equivalences
\[
\begin{tikzcd}[column sep=large,row sep=small]
\per_{\dg}A
  \arrow[rr,"\sim"]
&& \pvd_{\dg}B \\
\pvd_{\dg}A
  \arrow[rr,"\sim"] \arrow[u,hook]
&& \per_{\dg}B
  \arrow[u,hook].
\end{tikzcd}
\]
Therefore by taking dg quotients, we obtain a quasi-equivalence
\[\C_{d,\dg}(A)\simeq\sg_{\dg}B.\qedhere\]
\end{proof}

We end this section by interpreting Theorem \ref{thm:fundamental-domain} and Corollary \ref{cor:bij-CYstr} purely in terms of positive dg algebras.

\begin{Thm}
Let $B$ be a proper pvd-finite positive dg algebra such that $\add DB=\add B[d+1]$ holds in $\D(B)$.
\begin{enumerate}
\item Let $\pi\colon\pvd B\to \sg B$ be the natural functor. Then $\pi(H^0B)\in\sg B$ is a $d$-cluster tilting object.
\item We have a bijection between
\[\{\text{right $(d+1)$-Calabi--Yau structures on $B$}\}\xrightarrow[\cong]{}\{\text{right $d$-Calabi--Yau structures on $\sg_{\dg}B$}\}.\]
\end{enumerate}
\end{Thm}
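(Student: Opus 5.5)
The plan is to reduce everything to the connective side via Koszul duality and then invoke Theorem~\ref{thm:fundamental-domain} and Corollary~\ref{cor:bij-CYstr}.

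\textbf{Setting up the dual algebra.} By the preceding corollary (the bijection induced by \cite[4.3]{Fus}), there is a locally finite connective dg algebra $A$ with $B\simeq A^!$ such that $\add_{\dg}A$ is $d$-stable. We may have to replace $B$ by $B^{\op}$ here; the hypothesis $\add DB=\add B[d+1]$ is symmetric under $D$ and a shift, as used in the proof of Theorem~\ref{thm:aus-cor-for-loc-fin}. The combined quasi-equivalences in the proof of the singularity-category corollary then give
\[
\per_{\dg}A\simeq\pvd_{\dg}B,\qquad \pvd_{\dg}A\simeq\per_{\dg}B,
\]
compatible with the inclusions, and hence $\C_{d,\dg}(A)\simeq\sg_{\dg}B$. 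Here $\D^b_{\fp,\dg}(\add_{\dg}A)=\pvd_{\dg}A$ by local finiteness.

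\textbf{Part (1).} We track the object $A$. Theorem~\ref{thm:koszul-duality} gives $\Phi_A(A)\simeq S_{B^{\op}}$, and the duality $D$ carries $S_{B^{\op}}$ to a semisimple $H^0B$-module. So the image of $A$ in $\pvd B$ has the same additive closure as $H^0B/\rad H^0B$, which is $H^0B$ since $B$ is positive. Theorem~\ref{thm:fundamental-domain} says $\add A$ is $d$-cluster tilting in $\C_d(A)$. Transporting this along the equivalence shows that $\pi(H^0B)$ is $d$-cluster tilting. Cluster tilting depends only on $\add$, so agreement up to $\add$ is enough.

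\textbf{Part (2).} Corollary~\ref{cor:bij-CYstr} gives a bijection between right $(d+1)$-CY structures on $\pvd_{\dg}A$ and right $d$-CY structures on $\C_{d,\dg}(A)\simeq\sg_{\dg}B$. It remains to identify right $(d+1)$-CY structures on $\pvd_{\dg}A\simeq\per_{\dg}B$ with those on $B$, and this is the main obstacle. Cyclic homology and its non-degeneracy condition are Morita invariant, because $B\to\per_{\dg}B$ is a Morita equivalence. The restriction $\D(\per_{\dg}B^e)\simeq\D(B^e)$ then carries the morphism $\per_{\dg}B[d+1]\to D\per_{\dg}B$ to $B[d+1]\to DB$, so non-degeneracy corresponds.

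There is one subtlety: the quasi-equivalence $\pvd_{\dg}A\simeq\per_{\dg}B$ is a composite of two contravariant dualities. We must check that a quasi-equivalence $\A\simeq\A'$ (even one arising this way, i.e.\ covariant overall) induces compatible isomorphisms on $\HC$ and $\HH$ preserving non-degeneracy. This holds for any quasi-equivalence, and the bijection then follows by composing.
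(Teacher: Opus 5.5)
Your proposal is correct and follows the paper's route. The paper's proof simply combines Theorem~\ref{thm:fundamental-domain} and Corollary~\ref{cor:bij-CYstr} with the Koszul duality of \cite[4.4(2)]{Fus}, which is the same transport you carry out. You differ only in realizing $B$ as $A^!$ through the preceding corollaries instead of citing the positive-side duality directly, and in spelling out the Morita-invariance step that identifies right Calabi--Yau structures on $B$ with those on $\per_{\dg}B$.
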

\begin{proof}
These follow from Theorem \ref{thm:fundamental-domain}, Corollary \ref{cor:bij-CYstr} and \cite[4.4(2)]{Fus}.
\end{proof}

\bibliographystyle{alpha} 
\bibliography{reference}

\end{document}